\newtheorem{fact}{Fact}
\newcommand{\boundary}{\partial}
\newcommand{\R}{\mathbb{R}}
\newcommand{\T}[1]{\widetilde{#1}}
\newcommand{\eps}{\varepsilon}
\newcommand{\EMPH}[1]{\textit{\textbf{#1}}}
\DeclareMathOperator{\im}{im}
\DeclareMathOperator{\spn}{span}
\title{Hodge Decomposition and General Laplacian Solvers for Embedded Simplicial Complexes}
\author{Mitchell Black}{School of Electrical Engineering and Computer Science, Oregon State University, USA}{blackmit@oregonstate.edu}{}{}
\author{Amir Nayyeri}{School of Electrical Engineering and Computer Science, Oregon State University, USA}{nayyeria@eecs.oregonstate.edu }{}{}
\authorrunning{M.~Black and A.~Nayyeri} 
\titlerunning{Hodge Decomposition and General Laplacian Solvers}
\keywords{Computational Topology, Laplacian solvers, Combinatorial Laplacian, Hodge decomposition, Parameterized Complexity} 
\begin{document}

\maketitle

\begin{abstract}
    We describe a nearly-linear time algorithm to solve the linear system $L_1x = b$ parameterized by the first Betti number of the complex, where $L_1$ is the 1-Laplacian of a simplicial complex $K$ that is a subcomplex of a collapsible complex $X$ linearly embedded in $\R^{3}$. Our algorithm generalizes the work of Black et al.~[SODA2022] that solved the same problem but required that $K$ have trivial first homology.
    Our algorithm works for complexes $K$ with arbitrary first homology with running time that is nearly-linear with respect to the size of the complex and polynomial with respect to the first Betti number.
    The key to our solver is a new algorithm for computing the Hodge decomposition of 1-chains of $K$ in nearly-linear time. Additionally, our algorithm implies a nearly quadratic solver and nearly quadratic Hodge decomposition for the 1-Laplacian of any simplicial complex $K$ embedded in $\R^{3}$, as $K$ can always be expanded to a collapsible embedded complex of quadratic complexity.
\end{abstract}

\section{Introduction}
The $d$th combinatorial Laplacian of a simplicial complex $K$ is a linear operator that acts on vectors of real numbers associated to the $d$-simplices of $K$. The $d$th combinatorial Laplacian is defined as
\[
L_d = \boundary_{d}^{T}\boundary_d + \boundary_{d+1}\boundary_{d+1}^{T},
\]
where $\boundary_d:C_d(K)\to C_{d-1}(K)$ is the $d$th boundary map of $K$, and $C_d(K)$ is the $d$th chain group of $K$.  The $d$th Laplacian encodes the incidence of $(d-1)$-, $d$- and $(d+1)$-simplices.  In particular, the $0$th Laplacian $L_0$ is composed of a constant map $\partial_0^T\partial_0$, and the well-known graph Laplacian $\partial_1\partial_1^T$.  The graph Laplacian matrix and its algebraic properties have been extensively studied in algebraic and spectral graph theory, a topic that has flourished into a rich field with many applications in computer science such as graph clustering \cite{shi_cuts,ng_clustering}, graph sparsification \cite{spielman_sparsification}, and max flow solvers \cite{Christiano2011MaxFlow} (see Spielman's book and references therein~\cite{SpielmanSpectralBook}).
\par
A highlight of recent advances in algorithmic spectral graph theory is nearly-linear time solvers for linear systems on the graph Laplacian that emerged as a result of decades of research~\cite{SpielmanTengSolver, KoutisMP10, KoutisMP11, Bern05SuppGraphPrec, Boman03SuppTheoPrec, Kelner13SimpleSDD, CohenLogSqrtSolver14, jambulapati2020ultrasparse, Vaidya1990}.  These results imply nearly-linear time solvers for the more general class of symmetric diagonally dominant matrices.  They also have triggered research to find out which classes of linear systems admit nearly-linear time solvers~\cite{Kyng2020HardnessResultsLinea}. Moreover, these solvers are used for different application areas such as approximation algorithm design and numerical analysis~\cite{Christiano2011MaxFlow,Boman08FastEllpiticPDE}.
\par
Recent work has attempted to extend the success of graph Laplacian solvers to higher dimensional Laplacians. Cohen et al.~initiated this line of work by introducing a nearly-linear solver for the 1-Laplacian of collapsible complexes embedded in $\R^{3}$ \cite{OneLaplaciansCohen14}. Black et al.~continued this work by considering complexes with trivial first homology that were subcomplexes of collapsible complexes embedded in $\R^{3}$ \cite{CollapsibleUniverseBlack22}. The solver of Black et al.~implies a nearly quadratic solver for any complex with trivial first homology embedded in $\R^{3}$; they show that a complex embedded in $\R^{3}$ can be extended to a collapsible embedded complex with at most quadratic complexity.
\par
In this paper, we extend the work of Cohen et al.~and Black et al.~to any subcomplex of a collapsible complex embedded in $\R^{3}$, regardless of the rank of its first homology group.
The running time of our solver is nearly-linear with respect to the size of the collapsible complex, and polynomial with respect to the rank of its first homology group.
The main tool in our paper is a new algorithm for computing the Hodge Decomposition of a $1$-chain.
\par
Computing the Hodge decomposition is a problem of independent interest since the
Hodge decomposition has found a myriad of applications in topological data analysis, numerical analysis, and computer graphics among other areas~\cite{Jiang11RankingWithHodge, Candogan11HodgeGame, Arnold2010ExtCalcHodge, deSilva2011PersCircCoord, Tong2003DiscreteVecFieldDecomp, Xu2012HodgeRank, Tahbaz2010DistCovVerSensor, Friedman96BettiNumbers, Crane2013DGP}.
The Hodge decomposition can be computed exactly in $O(n^\omega)$ time by solving a constant number of systems of linear equations, where $\omega$ is the matrix multiplication constant.
(Approximately) computing the Hodge decomposition in nearly-linear time has been an open question with many possible applications.
\par
Cohen et al.~describe nearly-linear projection operators into the coboundary space and cycle space, which implies Hodge decomposition for complexes with trivial homology as the boundary and cycle spaces are identical in this case.
In this paper, we describe projection operators into the boundary and harmonic spaces for an arbitrary subcomplex of a collapsible simplicial complex embedded in $\R^3$. Our boundary projection operator is key to our solver.  Our results imply 1-Laplacian solvers and projection operators for any simplicial complex embedded in $\R^3$ that are quadratic in the size of the complex and polynomial in the first Betti number; these follow from the fact that any complex in $\R^{3}$ can be extended to a collapsible complex in $\R^{3}$ with a quadratic number of simplices \cite[Corollary 3.3]{CollapsibleUniverseBlack22}.
\par
While this paper presents a positive result on extending graph Laplacian solvers to a more general class of Laplacians, a recent work by Ding et al.~\cite{Ding22LaplacianHardness} shows that solving linear equations in arbitrary 1-Laplacians (and therefore arbitrary $d$-Laplacians) is as hard as solving arbitrary sparse linear equations with bounded integer entries and bounded condition number. An interesting open question is whether or not there exist fast solvers for other classes of simplicial complexes.

\subsection{Our Results}
Let $X$ be a collapsible simplicial complex with a known collapsing sequence embedded in $\R^3$, and let $K$ be a subcomplex of $X$.  The first result of this paper is a $1$-Laplacian solver for $K$.  Recall that $L_1 = \partial_2\partial_2^T + \partial_1^T\partial_1$. We define $L_1^{up} = \partial_2\partial_2^T$ and $L_1^{down} = \partial_1^T\partial_1$.  We refer to $L_1^{up}$ and $L_1^{down}$ as the up-Laplacian and down-Laplacian, respectively.

\begin{restatable}{theorem}{solverthm}
\label{thm:main_solver_thm}
Let $X$ be a collapsible simplicial complex with a known collapsing sequence linearly embedded in $\R^3$, and let $K \subset X$ be a subcomplex of $X$.
For any $\varepsilon>0$, there is an operator $LaplacianSolver(X, K, \varepsilon)$ such that
\[
(1-\varepsilon)(L_1[K])^+ \preceq LaplacianSolver(X, K, \varepsilon) \preceq (L_1[K])^+.
\]
where $(L_1[K])^+$ is the pseudoinverse of the 1-Laplacian $L_1[K]$. Further, for any $x\in C_1$,
$LaplacianSolver(X, K, \varepsilon)\cdot x$ can be computed in
$\T O\left(
    \beta^3 \cdot n \cdot \log n \cdot \log({n}/{(\lambda_{\min}(K)\cdot \lambda_{\min}(X)\cdot\eps)})
\right)\footnotemark$
time, where $n$ is the total number of simplices in $X$, $\lambda_{\min}(K)$ and $\lambda_{\min}(X)$ are the smallest nonzero eigenvalues of $L^{up}_1(K)$ and $L^{up}_1(X)$ respectively, and $\beta$ is the rank of the first homology group of $K$.
\end{restatable}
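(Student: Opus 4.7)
The plan is to decompose the problem along the Hodge decomposition of 1-chains. Since $\boundary_1\boundary_2=0$, the operators $L_1^{up}[K]=\boundary_2\boundary_2^T$ and $L_1^{down}[K]=\boundary_1^T\boundary_1$ have orthogonal images and annihilate each other, so
\[
C_1(K) = \im(\boundary_2[K]) \oplus \im(\boundary_1[K]^T) \oplus \mathcal H_1(K),
\qquad
(L_1[K])^+ = (L_1^{up}[K])^+ + (L_1^{down}[K])^+.
\]
Accordingly $LaplacianSolver(X,K,\eps)\cdot b$ is the sum of an up-solve applied to the boundary part of $b$ and a down-solve applied to the coboundary part of $b$; the harmonic part of $b$ is killed automatically by pseudoinversion.

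The down-solve is easy: $(L_1^{down}[K])^+$ reduces to a graph-Laplacian solve on the 1-skeleton (since $\boundary_1\boundary_1^T$ is a weighted graph Laplacian of the edge-vertex incidence), and any nearly-linear-time SDD solver costs $\T O(n\log(n/\eps))$. For the up-solve, Black et al.\ already give a nearly-linear-time solver for $L_1^{up}[K]$ under the extra assumption $H_1(K)=0$, using the collapsing sequence of $X$. My task is thus to extend this to arbitrary $K$ by stripping off a $\beta$-dimensional harmonic obstruction from $b$ before invoking the Black-style machinery.

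The main obstacle, and the source of the $\beta^3$ factor in the runtime, is constructing a fast projector $\Pi_{\mathcal H}$ onto $\mathcal H_1(K)$ in nearly-linear time. My plan is: (i) from the embedding of $X$ in $\R^3$ and its collapsing sequence, read off $\beta$ cycle representatives $z_1,\dots,z_\beta$ of a basis of $H_1(K)$; (ii) since $X$ is collapsible, each $z_i$ bounds in $X$, so the Cohen et al.\ up-solver on $X$ produces a $\boundary_2[X]$-filling, and $O(\beta)$ further correcting solves convert these fillings into harmonic representatives $h_1,\dots,h_\beta$ of $\mathcal H_1(K)$; (iii) with $H=[h_1\mid\cdots\mid h_\beta]$, set $\Pi_{\mathcal H} = H(H^TH)^+H^T$, where inverting the $\beta\times\beta$ Gram matrix $H^TH$ contributes $O(\beta^3)$ and the outer iterative refinement needed to enforce the spectral-approximation guarantee contributes the remaining polynomial dependence in $\beta$. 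Once $b$ is projected against $\Pi_{\mathcal H}$ and against $\im(\boundary_1[K]^T)$, the residual lies in $\im(\boundary_2[K])$ and can be fed to the Black-style up-solver; the final answer is reconstructed as the sum of the up- and down-contributions.

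I would finish by propagating the $\eps$-guarantees of each constituent solver (the graph-Laplacian solver, the Cohen et al.\ up-solver on $X$, the Black-style up-solver on $K$, and the Gram inversion of $H^TH$) through the additive Hodge splitting, shrinking $\eps$ by a constant at each stage, to obtain $(1-\eps)(L_1[K])^+ \preceq LaplacianSolver(X,K,\eps) \preceq (L_1[K])^+$. The runtime then assembles as $\T O(n\log n)$ per internal solve, a $\log(n/(\lambda_{\min}(K)\lambda_{\min}(X)\eps))$ precision factor coming from the condition numbers of the two nested up-solvers on $K$ and $X$, and a $\beta^3$ overhead from the harmonic correction, matching the claimed bound.
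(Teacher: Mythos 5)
Your high-level decomposition ($(L_1)^+ = (L_1^{up})^+ + (L_1^{down})^+$, down-solve is a graph Laplacian solve, up-solve via a Black-et-al.-style lemma) matches the paper. However, the proposal omits two pieces that are not optional details but the actual content of the argument.

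First, the error-bound type. The Black et al.\ lemma (Lemma~\ref{lem:black_etal_bbt}) that drives the up-solver requires an approximate projector $\T\Pi_{\im(\partial_2)}$ satisfying an \emph{output-relative} bound $(1-\eps)\Pi_{\im(\partial_2)} \preceq \T\Pi_{\im(\partial_2)} \preceq \Pi_{\im(\partial_2)}$. But the naive boundary projector you propose, $I - \Pi_{\mathcal H} - \T\Pi_{cbd}$ (or its approximate version), only inherits an \emph{input}-relative bound $-\eps I \preceq \Pi_{bd} - \T\Pi_{bd} \preceq \eps I$; on vectors whose boundary part is tiny relative to their norm, the multiplicative guarantee is unbounded, and Lemma~\ref{lem:black_etal_bbt} cannot be applied. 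Closing this gap is precisely why the paper introduces the operators $P_T$ (Cohen et al.'s tree operator) and the new $P_\Gamma$, and conjugates the naive projector by $(I-P_\Gamma)(I-P_T)$; Lemma~\ref{lem:PTPGammaProp} and Corollary~\ref{cor:PGamma} show this sandwich kills the cocyclic part, leaves $\Pi_{bd}$ fixed, and has operator norm bounded by the quantities appearing in the final runtime. Your proposal never addresses this conversion, and without it the chain of Loewner bounds does not close.

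Second, the construction of the harmonic projector. You propose reading off a homology basis $z_1,\dots,z_\beta$ and turning each $z_i$ into a harmonic representative via fillings in $X$ and ``$O(\beta)$ further correcting solves.'' This step is not correct as stated: the filling of $z_i$ lives in $C_2(X)$, not $C_1(K)$, and there is no direct way to extract the harmonic part of a 1-cycle $z_i$ using only the Cohen et al.\ cycle projector, since $\T\Pi_{cyc}$ applied to a cycle is just (approximately) the identity. The paper instead converts the homology basis into a \emph{cohomology} basis $P = \{p_1,\dots,p_\beta\}$ via the Black et al.\ operator $C(X,K)$, then applies $\T\Pi_{cyc}$ to the $p_i$'s (a cocycle's cycle part is its harmonic part), and then runs Gram--Schmidt. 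Even then, the approximation analysis requires a quantitative lower bound on the linear independence of the exact harmonic parts ($\delta$-independence, Lemma~\ref{lem:delta_indep_harm}), proved via total unimodularity of $\partial_1$ and a Cramer's-rule argument; without it, pairwise-close approximate vectors can span a completely wrong subspace (Figure~\ref{fig:delta_independence}). Your $\Pi_{\mathcal H} = H(H^TH)^+H^T$ with ``iterative refinement'' does not substitute for this, because the issue is not refining the Gram inversion but certifying that the approximate $H$ spans nearly the right subspace at all.
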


\footnotetext{The $\tilde{O}(\cdot)$ notations hides a factor of $\log \log n$.}

This result is a generalization of Theorem 1.1 of Black et al.~\cite{CollapsibleUniverseBlack22} that requires $K$ to have trivial first homology.
Their running time depends on $\log (n\kappa/\eps)$, with $\kappa$ being the condition number of $L_1^{up}(K)$ within the boundary space. The condition number is defined $\kappa = \lambda_{\max}(K)/\lambda_{\min}(K)$, where $\lambda_{\max}(K)$ is the largest eigenvalue of $L_1^{up}(K)$, and $\lambda_{\min}(K)$ is the smallest \emph{nonzero} eigenvalue of $L_1^{up}(K)$.  We observe that $\lambda_{\max}(K)$ is polynomially bounded with respect to the size of the complex (Lemma \ref{lem:bounded_lambda_max}.)  Therefore, the log dependence of the running time of Black et al.'s solver can be simplified to $\log (n/(\lambda_{\min}(K)\cdot\eps))$.  The running time of Theorem~\ref{thm:main_solver_thm}, in contrast, has an extra dependence to $\lambda_{\min}(X)$ within the log, in addition to a polynomial dependence to $\beta$. For the special case that $\beta = 0$, we can eliminate the dependence on $\lambda_{\min}(X)$ with a more careful analysis and match the running time of Black et al.

\subparagraph{} The new ingredient that makes Theorem~\ref{thm:main_solver_thm} possible is an approximate projection operator onto the boundary space.
Lacking this operator, the previous papers had to assume that $K$ has trivial homology and use a projection into the cycle space instead.

\begin{restatable}{lemma}{boundaryprojection}
\label{lem:boundary_form_cbd_harm}
Let $K$ be a simplicial complex linearly embedded in a collapsible complex $X$ with a known collapsing sequence that is embedded in $\R^3$, and let $\Pi_{bd}$ be the orthogonal projection operator into the space of boundary $1$-chains in $K$. For any $\eps>0$, there is an operator $\widetilde\Pi_{bd}(\eps)$, such that
\[
    (1-\varepsilon)\Pi_{bd} \preceq \widetilde{\Pi}_{bd}(\eps) \preceq (1+\eps)\Pi_{bd}.
\]
Further, for any $1$-chain $x$, $\widetilde\Pi_{bd}(\eps)\cdot x$ can be computed in
$
\tilde{O}(\beta^3\cdot n\cdot \log n \cdot\log(\frac{n}{\lambda_{\min}(X)\cdot\eps}))
$
time, where $\beta$ is the rank of the first homology group of $K$, $n$ is the total number of simplices in $X$, and $\lambda_{\min}(X)$ is the smallest nonzero eigenvalue of $L_1^{up}(X)$.
\end{restatable}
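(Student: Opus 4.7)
The plan is to build $\widetilde{\Pi}_{bd}(\eps)$ from the Hodge identity $\Pi_{bd}=I-\Pi_{cbd}-\Pi_{harm}$ on $C_1(K)$, assembling spectral approximations of the two projectors on the right-hand side at accuracy $\eps'=\Theta(\eps/\beta)$ and then setting $\widetilde{\Pi}_{bd}(\eps):=I-\widetilde{\Pi}_{cbd}(\eps')-\widetilde{\Pi}_{harm}(\eps')$.

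For the coboundary part I would use the closed form $\Pi_{cbd}=\partial_1^T L_0^+\partial_1$, where $L_0$ is the graph Laplacian of the $1$-skeleton of $K$. One application of an off-the-shelf near-linear-time SDD / graph-Laplacian solver for $L_0$ on $\partial_1 x$, post-multiplied by $\partial_1^T$, then produces $\widetilde{\Pi}_{cbd}(\eps')\,x$ in $\widetilde O(n\log(1/\eps'))$ time with the required spectral accuracy, independently of $\beta$ and of the ambient $X$.

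The main work, and the source of the $\beta^3$ factor, is constructing the harmonic projector. Because the harmonic space $\ker L_1(K)$ has small dimension $\beta$, I would precompute a matrix $H\in\R^{n_1\times\beta}$ whose columns are approximate harmonic representatives of a basis of $H_1(K;\R)$ and then apply $\widetilde{\Pi}_{harm}(\eps')=H(H^T H)^{-1}H^T$ directly. To build $H$: first use a combinatorial procedure exploiting the $\R^3$-embedding of $X$ to produce cycles $\gamma_1,\dots,\gamma_\beta\in Z_1(K)$ that represent a basis of $H_1(K;\R)$; then, for each $\gamma_i$, extract an approximate harmonic representative by subtracting its $B_1(K)$-component. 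The natural way to do that subtraction would be to solve $L_1^{up}(K)\,\sigma=\partial_2^T\gamma_i$, but this is precisely the solver we are trying to construct, so circularity must be avoided. My plan is to route the solve through the ambient collapsible complex: invoke Cohen et al.'s up-Laplacian solver on $X$ to obtain a $2$-chain $\widetilde\tau_i\in C_2(X)$ with $\partial_2^X\widetilde\tau_i\approx\gamma_i$, and then use the coboundary projector from the previous paragraph to correct $\widetilde\tau_i$ so that its boundary lies inside $B_1(K)$ rather than merely in $B_1(X)$. This passage from $X$ to $K$ is the main technical obstacle: one must argue that the resulting basis is well-conditioned enough that $H(H^T H)^{-1}H^T$ is a $(1\pm\eps')$-spectral approximation of $\Pi_{harm}$, and it is here that the dependence on $\lambda_{\min}(X)$ inside the logarithm originates.

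To finish, a standard spectral perturbation argument for sums of orthogonal projectors onto complementary subspaces yields $(1-\eps)\Pi_{bd}\preceq \widetilde{\Pi}_{bd}(\eps)\preceq(1+\eps)\Pi_{bd}$ once $\eps'$ is chosen polynomially in $\eps/\beta$. Per application the cost is $\widetilde O(n\log(1/\eps'))$ for the coboundary solve plus $O(n\beta+\beta^2)$ for applying the stored $H$ and $(H^T H)^{-1}$; accounting for the $\beta$ ambient-$X$ solves and the $\beta$-fold accuracy tightening needed to build and maintain $H$ recovers the claimed $\widetilde O(\beta^3\cdot n\cdot \log n\cdot\log(n/(\lambda_{\min}(X)\eps)))$ running time.
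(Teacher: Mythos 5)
Your plan constructs $\widetilde\Pi_{bd}(\eps):=I-\widetilde\Pi_{cbd}(\eps')-\widetilde\Pi_{harm}(\eps')$ and then claims a ``standard spectral perturbation argument'' gives $(1-\eps)\Pi_{bd}\preceq\widetilde\Pi_{bd}(\eps)\preceq(1+\eps)\Pi_{bd}$. This is the central gap: that inequality is an \emph{output}-relative (multiplicative) error bound, whereas the identity $I-\widetilde\Pi_{cbd}-\widetilde\Pi_{harm}$ can at best satisfy an \emph{input}-relative (additive) bound $-\eps' I\preceq\Pi_{bd}-\widetilde\Pi_{bd}\preceq\eps' I$. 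The two are not interchangeable. Concretely, take any $x$ orthogonal to the boundary space, so $x^T\Pi_{bd}x=0$. The required upper bound forces $x^T\widetilde\Pi_{bd}(\eps)x\le 0$, but for the operator $I-\widetilde\Pi_{cbd}(\eps')-\widetilde\Pi_{harm}(\eps')$ we only know $|x^T\widetilde\Pi_{bd}(\eps)x|\lesssim\eps'\|x\|^2$, which can certainly be positive. No choice of $\eps'$ (even polynomial in $\eps/\beta$) repairs this, because the problem is one of sign, not magnitude, on the cocycle subspace. The paper explicitly makes this observation and calls the naive operator $\overline\Pi_{bd}$. Its fix is the actual content of the proof: conjugate $\overline\Pi_{bd}$ by $(I-P_\Gamma)(I-P_T)$, where $P_T$ pushes a chain to a spanning tree with the same $\partial_1$-boundary and $P_\Gamma$ replaces a cycle by the homologous combination of a fixed homology basis. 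The key property is that $\im\bigl((I-P_\Gamma)(I-P_T)\bigr)$ lies inside the boundary space while $(I-P_\Gamma)(I-P_T)\Pi_{bd}=\Pi_{bd}$, so the conjugated operator annihilates cocycles \emph{exactly} and preserves boundaries, which is precisely what makes an output-relative bound achievable; the remaining work is bounding $\|I-P_T\|$ and $\|I-P_\Gamma\|$ to control how much the conjugation amplifies the additive error. Your proposal contains no analogue of $P_T$ or $P_\Gamma$, so the claimed Loewner inequality cannot be reached from it.

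A secondary issue: your construction of the harmonic matrix $H$ starts from \emph{cycles} $\gamma_i$ and tries to remove their $B_1(K)$ component, which is tantamount to computing $\Pi_{bd}\gamma_i$ --- circular, as you note. Your proposed escape (solve in $X$ via Cohen et al.\ and then ``correct'' with the coboundary projector) is underspecified and does not obviously land back in $K$. The paper avoids the circularity structurally: it starts from a \emph{cohomology} basis (cocycles, which have no boundary component), so extracting the harmonic part only requires removing the coboundary component, and that is exactly what the graph-level $\widetilde\Pi_{cyc}$ of Cohen et al.\ provides. It then needs a quantitative $\delta$-independence bound on the resulting harmonic parts (via total unimodularity of $\partial_1$ and Cramer's rule) to make Gram--Schmidt stable --- a step your sketch also omits, although you correctly intuit that conditioning of the basis is where the $\lambda_{\min}(X)$ dependence must enter.
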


\subparagraph{} A key technical challenge to achieve our projection operator onto the boundary space is computing a projection into the space of harmonic chains, formalized in part (ii) of the following lemma. Note that our approximation guarantee for projection into the harmonic space is weaker than the one for projection into the boundary space (more on this in the overview).

\begin{restatable}{lemma}{harmonicprojection}
\label{lem:harmonic_projection}
Let $K$ be a subcomplex of a collapsible simplicial complex $X$ with a known collapsing sequence that is linearly embedded in $\R^3$.
Let $\beta$ be the rank of the first homology group of $K$, $n$ be the total number of simplices in $X$, and $\lambda_{\min}(X)$ be the smallest nonzero eigenvalue of $L_1^{up}(X)$.
\begin{enumerate}
    \item [(i)] For any $\eps>0$, there is an
    $
    \tilde{O}(\beta^2 \cdot n \cdot \log n \cdot \log(\frac{n}{\lambda_{min}(X)\cdot \eps}))
    $
    time algorithm to compute an orthonormal set of vectors $\{\T g_1, \ldots, \T g_\beta\}$ such that there exists an orthonormal harmonic basis $\{g_1, \ldots, g_\beta\}$ with $\|g_i - \T g_i\| \leq \eps$ for all $1\leq i\leq\beta$.

    \item [(ii)]
    For any $\eps > 0$, there exists a symmetric matrix $\widetilde{\Pi}_{hr}(\eps)$ such that,
    \[
    \Pi_{hr} -\eps I \preceq \widetilde{\Pi}_{hr}(\eps) \preceq \Pi_{hr} +\eps I,
    \]
    where $\Pi_{hr}$ is the orthogonal projection into the harmonic space.
    Moreover, for any $1$-chain $x$, $\widetilde{\Pi}_{hr}(\eps) \cdot x$ can be computed in
    $
    \tilde{O}(\beta^2 \cdot n \cdot \log n \cdot \log(\frac{n}{\lambda_{min}(X)\cdot \eps}))
    $
    time.
\end{enumerate}

\end{restatable}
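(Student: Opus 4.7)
The plan is to construct $\widetilde\Pi_{hr}$ as a sum of rank-one projectors onto approximate harmonic representatives of a basis of the first homology of $K$, so the bulk of the work lies in part (i).

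For part (i), I would proceed in three steps. First, produce $\beta$ cycles $z_1,\ldots,z_\beta \in Z_1(K)$ whose homology classes form a basis of $H_1(K;\R)$; since the cycle space $Z_1(K)$ admits an explicit fundamental-cycle basis from a spanning tree of the 1-skeleton of $K$, such a homology basis can be identified combinatorially together with elementary linear-algebra manipulations on $\partial_2[K]$. Second, approximate each true harmonic representative $g_i^{\mathrm{raw}} = z_i - \partial_2 y_i$, where $y_i$ is the least-squares solution to $\partial_2 y \approx z_i$, i.e.\ the solution of $L_1^{up}(K)\, y_i = \partial_2^T z_i$. As $K$ is not itself assumed collapsible, no direct fast up-Laplacian solver on $K$ is available; I would instead leverage the up-Laplacian solver of Cohen et al.\ on the collapsible ambient complex $X$, combined with an iterative scheme that restricts the corrections to 2-chains supported on triangles of $K$. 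Running this inner solver to accuracy $\eps'$ yields raw approximants $\widetilde g_i^{\mathrm{raw}}$ with $\|\widetilde g_i^{\mathrm{raw}} - g_i^{\mathrm{raw}}\| \le \eps'$. Third, apply Gram--Schmidt to $\{\widetilde g_i^{\mathrm{raw}}\}$ to obtain the orthonormal set $\{\widetilde g_1,\ldots,\widetilde g_\beta\}$; by standard perturbation bounds and the choice $\eps' = \Theta(\eps/\beta)$, these satisfy $\|\widetilde g_i - g_i\|\le\eps$ for the orthonormalization $\{g_i\}$ of $\{g_i^{\mathrm{raw}}\}$, which is the orthonormal harmonic basis promised by the lemma.

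For part (ii), I would define
\[
    \widetilde\Pi_{hr}(\eps) \;=\; \sum_{i=1}^\beta \widetilde g_i\,\widetilde g_i^{\,T},
\]
stored implicitly by the $\widetilde g_i$'s so that $\widetilde\Pi_{hr}(\eps)\cdot x = \sum_i \langle \widetilde g_i,x\rangle\,\widetilde g_i$ is evaluated in $O(\beta n)$ time. Since $\Pi_{hr} = \sum_i g_i g_i^T$ for any orthonormal harmonic basis, writing $\widetilde g_i = g_i + \delta_i$ with $\|\delta_i\|\le\eps/(2\beta)$ gives $\|\widetilde\Pi_{hr}(\eps) - \Pi_{hr}\| \le \sum_i \|\widetilde g_i \widetilde g_i^T - g_i g_i^T\| = O(\eps)$, which translates to $\Pi_{hr} - \eps I \preceq \widetilde\Pi_{hr}(\eps) \preceq \Pi_{hr} + \eps I$. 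The guarantee is only additive (unlike the multiplicative bound achieved in Lemma~\ref{lem:boundary_form_cbd_harm}) precisely because the perturbations $\delta_i$ need not themselves lie in the harmonic subspace, so one cannot argue the relative bound direction-wise.

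The main obstacle is the second step of part (i): approximately solving an $L_1^{up}(K)$ system without a dedicated fast solver on $K$. Handling this by lifting to the collapsible complex $X$ requires controlling components of intermediate iterates that fall outside the span of triangles of $K$, and the convergence rate of this lifting is what introduces the $\lambda_{\min}(X)$ factor inside the logarithm of the stated runtime. Tracking the accuracy $\eps'$ required of this inner solver (as a function of $\beta$ and $\eps$) so that the final additive error in $\widetilde\Pi_{hr}(\eps)$ comes out to $\eps$, while matching the claimed $\widetilde O(\beta^2 n\log n \log(n/(\lambda_{\min}(X)\eps)))$ runtime, will be the technical heart of the proof.
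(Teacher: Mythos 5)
Your high-level skeleton (approximate a harmonic basis, then Gram--Schmidt, then use $\widetilde\Pi_{hr} = \sum_i \widetilde g_i \widetilde g_i^T$) matches the paper, but the two steps you flag as the ``technical heart'' are exactly where your route diverges and where it breaks.

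\textbf{Circularity in step two.} You propose to extract the harmonic part of a \emph{cycle} $z_i$ by solving the normal equations for $\partial_2[K]$ (note: this is an $L_2^{down}(K)=\partial_2^T\partial_2$ system, not $L_1^{up}(K)$ as written), via an iterative scheme that lifts to the collapsible complex $X$ and projects back to $K$. But the fast solver for this restricted system on $K$ \emph{is} the paper's main Theorem~\ref{thm:main_solver_thm}, and that solver is built on top of the approximate boundary projection $\widetilde\Pi_{bd}$, which in turn is built from $\widetilde\Pi_{hr}$. So using it here is circular. The paper sidesteps this by starting from a \emph{cohomology} basis $\{p_1,\dots,p_\beta\}$ (Corollary~\ref{cor:cohom_harmonic_prop}, via Black et al.'s operator $C$ applied to Dey's homology basis). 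For a cocycle $p_i$, the harmonic part equals $\Pi_{cyc}\,p_i$, and $\Pi_{cyc}$ depends only on $\partial_1$ --- a pure graph operator --- so the already-available Cohen et al.\ projection $\widetilde\Pi_{cyc}$ of Lemma~\ref{lem:graph_projection} suffices. No $\partial_2$-system ever needs to be solved, and the $\lambda_{\min}(X)$ in the logarithm comes only from bounding the lengths $p_{\max}$ of the cohomology basis (Lemma~\ref{lem:bound_Cgamma}), not from an inner iterative solve.

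\textbf{The Gram--Schmidt bound is not ``standard.''} Your claim that $\eps' = \Theta(\eps/\beta)$ is enough hides the central quantitative difficulty. If the (normalized) raw harmonic representatives are only $\delta$-linearly independent with $\delta$ tiny, Gram--Schmidt on a perturbed set can produce an orthonormal frame whose span is completely wrong even when the input perturbation is small (see Figure~\ref{fig:delta_independence} and the discussion around it). Lemma~\ref{lem:approximate_gram_schmidt} shows the error grows like $(8\beta/\delta)^\beta \cdot \eps'$, so you need $\eps' = (\delta/8\beta)^{\beta+1}\eps$, not $\Theta(\eps/\beta)$. To make this usable one must also \emph{lower-bound} $\delta$: this is Lemma~\ref{lem:delta_indep_harm}, which exploits the total unimodularity of $\partial_1$ (via a Cramer's-rule / witness argument) to show the harmonic parts of an integer cohomology basis are $\delta$-independent with $\delta\sim (\sqrt{n_1}\,p_{\max})^{-\beta}$. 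Substituting this $\delta$ is what produces the $\beta^2$ factor in the stated runtime, i.e.\ $\log(1/\eps') = O(\beta^2 \log(n/\lambda_{\min}(X)))$. Your proposal omits both the dependence lower bound and the exponential amplification in Gram--Schmidt, so the claimed runtime and accuracy would not follow.

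Your part (ii), once a correct $\{\widetilde g_i\}$ is in hand, is essentially the paper's Lemma~\ref{lem:approximate_projection} and is fine, including the observation about why only an additive (input-relative) guarantee is achievable.
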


Our projection operators into the harmonic and boundary space, along with the projection operator of Cohen et al.~\cite{OneLaplaciansCohen14} into the coboundary space,
give all the projection operators needed to compute the Hodge decomposition of $1$-chains in $K$.

Our harmonic projection operator is built using an orthonormal approximate harmonic basis (part (i) of Lemma~\ref{lem:harmonic_projection}). Dey~\cite{dey2019basis} describes a nearly-linear time algorithm for computing a homology basis for a complex linearly embedded in $\R^3$. Black et al.~\cite{CollapsibleUniverseBlack22} describe a nearly-linear time algorithm for computing a cohomology basis for subcomplexes of collapsible complexes embedded in $\R^3$.  Our harmonic basis, though approximate, can be viewed as a complement to these two results.

\subsection{Paper organization}

In addition to this introduction, the main body of this paper is a background and overview section. To simplify the presentation, the bulk of the technical details are left for the appendix, and the overview provides a high-level description of our approaches as well as the technical challenges and contribution of this paper. In the overview, we included references to the technical lemmas to enable easy access to the technical portion of the paper.

The background section introduces standard definitions of the concepts used in this paper.
We hope this section provides easy lookup for the reader while reading the overview section as well as the technical part of the paper.

\section{Background}
\label{sec:background}

In this section, we review basic definitions from linear algebra and algebraic and combinatorial topology that are used in this paper;  see references~\cite{CampbellBook79, Hatcher, JohnsonHornBook90, StillWell93Book} for further background.

\subsection{Linear Algebra}

\subparagraph{Span, Basis.}
Let $V = \{v_1, \ldots, v_k\}$ be a set of vectors in $\R^n$.  The \EMPH{span} of $V$, denoted $\spn(V)$, is the subspace of $\R^{n}$ of all linear combinations of $V$.
In particular, $V$ spans $\R^n$ if any vector in $\R^n$ is a linear combination of the vectors in $V$.
We say that $V$ is a \EMPH{basis} for its span if the dimension of its span equals the cardinality of $V$.

\subparagraph{Linear map, projection, inverse.} Let $A:\R^{n}\to\R^{m}$ be a linear map, represented by an $m\times n$ matrix. Typically, we don't make a distinction between a linear map and its matrix representation and denote both as $A$. The \EMPH{kernel} of $A$ is $\ker(A):= \{x\in\R^{n} : Ax=0\}$, and the \EMPH{image} of $A$ is $\im(A) = \{Ax : x\in\R^{m}\}$. The \EMPH{rank} of a linear map is the dimension of its image.
\par
We say that $U$ and $V$ \EMPH{orthogonally decompose} $W$, denoted $W = U \oplus V$, if (i) any vector in $U$ is orthogonal to any vector in $V$, and (ii) any vector in $x\in W$ is a unique sum of vectors in $x_{U}\in U$ and $x_{V}\in V$, i.e. $x = x_{U} + x_{V}$. The \EMPH{fundamental theorem of linear algebra} states that $\R^{n} = \im(A^T) \oplus \ker(A)$ and $\R^{m} = \im(A)\oplus\ker(A^T)$, where $A^T$ is the transpose of $A$ obtained by flipping $A$ over its diagonal.  In particular, if $A:\R^{n}\to\R^{n}$ is symmetric (i.e., $A=A^{T}$), then $\R^{n} = \im(A)\oplus\ker(A)$.
\par
A linear map $A:\R^n\rightarrow \R^n$ is a \EMPH{projection} if it is the identity for the vectors in its image, or equivalently, $AA = A$.
The map $A$ is an \EMPH{orthogonal projection} if it maps each point of $\R^n$ to its closest point in $\im(A)$, or equivalently, $A^T = A = AA$.
Note for any subspace $U$ of $\R^n$ there is a unique orthogonal projection into $U$, denoted $\Pi_U$. If $\{u_1,\ldots,u_k\}$ is an orthonormal basis for $U$, the orthogonal projection into $U$ is the linear map $\Pi_{U} = \sum_{i=1}^{k} u_iu_i^{T}$.
\par
If a linear map $A:\R^n\rightarrow\R^m$ is bijective, it has a well-defined \EMPH{inverse} denoted $A^{-1}:\R^{m}\rightarrow\R^{n}$ where $Ax=b\Longleftrightarrow A^{-1}b = x$.  More generally,
the \EMPH{pseudoinverse} of $A$ is the unique linear map $A^{+}:\R^{m}\to\R^{n}$ with the following properties: (i) $AA^{+}A = A$, (ii) $A^{+}AA^{+} = A^{+}$, (iii) $(AA^{+})^T = AA^{+}$, and (iv) $(A^{+}A)^{T} = A^{+}A$. Admittedly, the definition of the pseudoinverse is not very intuitive. A more intuitive description is that $A^{+}$ is the unique linear map with the following properties: (1) $A^{+}$ maps any vector $y\in\im(A)$ to the unique vector $x\in\im(A^{T})$ such that $Ax=y$, and (2) $A^{+}$ maps any vector $y\in\ker(A^{T})$ to 0. While it is not true in general that $(A+B)^{+} = B^{+}+A^{+}$ for linear maps $A$ and B, this is true if $A^{T}B=B^{T}A=0$; see Campbell \cite{CampbellBook79}, Theorem 3.1.1.

\subparagraph{Matrix norm, singular values, Loewner order.}
A symmetric matrix $A$ is \EMPH{positive semidefinite} if $x^TAx\geq 0$ for each $x\in\R^{n}$. The \EMPH{Loewner Order} is a partial order on the set of $n\times n$ symmetric matrices. For symmetric matrices $A$ and $B$, we say $A\preceq B$ if $B-A$ is positive semidefinite.
\par
Let $x\in\R^{n}$. Let $p$ be a positive integer. The \EMPH{p-norm} of $x$ is $\|x\|_p = \left(\sum_{i=1}^{n}|x[i]|^{p}\right)^{\frac{1}{p}}$.
We use the $1$-norm and $2$-norm in this paper. An important fact we will use throughout this paper is that $\|x\|_2\leq \|x\|_1 \leq \sqrt{n} \|x\|_2$. For any norm $\|\cdot\|$ on $\R^{n}$, there is an accompanying \EMPH{operator norm} of a matrix $A$ defined $\| A \| = {\max}_{x:\|x\|=1} \| Ax\|$, or equivalently, $\|A\| = \underset{x\neq 0}{\max}\left({\| Ax \|}/{\|x\|}\right)$. Unless otherwise specified, all norms in this paper will be the 2-norm.
\par
The \EMPH{singular value decomposition} of $A:\R^{n}\to\R^{m}$ for $m\geq n$ (resp. $m\leq n$) is a set of $n$ (resp. $m$) orthornomal vectors $\{u_1,\ldots,u_n\}\subset\R^{m}$ called \EMPH{left singular vectors}, $n$ (resp. $m$) orthornomal vectors $\{v_1,\ldots,v_n\}\subset\R^{n}$ called \EMPH{right singular vectors}, and $n$ (resp $m$) real numbers $\{\sigma_1,\ldots,\sigma_n\}\subset\R$ called \EMPH{singular values} such that $A = \sum_{i=1}^{n} \sigma_iu_iv_i^T$. The \EMPH{condition number} of a linear map $A:\R^{n}\to\R^{n}$ is $\kappa(A) = |\sigma_{\max}(A)|/|\sigma_{\min}(A)|$, where $\sigma_{\max}(A)$ and $\sigma_{\min}(A)$ are the largest and smallest non-zero singular values of $A$.
\par
The \EMPH{eigenvectors} and \EMPH{eigenvalues} of a matrix $A:\R^{n}\to\R^{n}$ are $n$ vectors $\{v_1,\ldots,v_n\}\subset\R^{n}$ and $n$ real numbers $\{\lambda_1,\ldots,\lambda_n\}$ such that $Av_i = \lambda_i v_i$. The singular values and right singular vectors (resp. left singular values) of a matrix $A:\R^{n}\to\R^{n}$ are the square roots of the eigenvalues and eigenvectors of $A^TA$ (resp. $AA^{T}$). If a matrix $A$ is symmetric, the eigenvectors of $A$ are orthogonal, and the eigenvectors and eigenvalues of $A$ are the left and right singular vectors and the singular values.

\subparagraph{Determinant, Cramer's rule, unimodularity.}
For any $1\leq i\leq n$, the \EMPH{determinant} of an $n\times n$ matrix $A = [a_{i,j}]_{1\leq i, j\leq n}$ can be defined via its \EMPH{Laplace expansion} as
$
\det(A) = \sum_{j=1}^{n}{\left((-1)^{i+j}\cdot a_{i, j}\cdot \det(A_{i, j})\right)},
$
where $A_{i,j}$ is the $(n-1)\times(n-1)$ matrix obtained by removing the $i$th row and $j$th column of $A$.  It is well known that $\det(A) \neq 0$ if and only if $A$ is bijective.  In that case, \EMPH{Cramer's rule} give an explicit formula for the solution of the linear system $Ax = b$, which is $x[i] = \det(A_i)/\det(A)$ where $A_i$ is the matrix obtained by replacing the $i$\textsuperscript{th} column of $A$ with $b$.

An $n\times n$ matrix $A$ is \EMPH{unimodular} if $\det(A) \in \{-1, +1\}$.  By Cramer's rule, $Ax = b$ has an integer solution if $A$ is unimodular and $A$ and $b$ have integer coefficients.  An $n\times m$ matrix $B$ is \EMPH{totally unimodular} if for any square submatrix $A$ of $B$, $\det(A)\in \{-1, 0, +1\}$. The 1-boundary matrix of a simplicial complex (defined below) is totally unimodular~\cite{Shrijver1986LinIntProgBook}.

\subsection{Topology}

\subparagraph{Simplicial complexes.} A \EMPH{simplicial complex} $K$ is a set of finite sets such that if $\tau\in K$ and $\sigma\subset\tau$, then $\sigma\in K$. A \EMPH{subcomplex} of $K$ is a subset $L\subset K$ such that $L$ is a simplicial complex. The \EMPH{vertices} of $K$ is the set $\cup_{\sigma\in K}\sigma$. We assume there is a fixed but arbitrary order $(v_1,\ldots,v_n)$ on the vertices of $K$.
\par
An element $\sigma\in K$ with $|\sigma|=d+1$ is a \EMPH{d-simplex}. A 0-simplex is a \EMPH{vertex}, a 1-simplex is an \EMPH{edge}, a 2-simplex is a \EMPH{triangle}, and a 3-simplex is a \EMPH{tetrahedron}. The set of all $d$-simplices in $K$ is denoted $K_d$.  For two simplices $\tau\subset\sigma$, we say that $\tau$ is a \EMPH{face} of $\sigma$.

\subparagraph{Hodge decomposition, homology, cohomology.}
The \EMPH{d\textsuperscript{th} chain group} of a simplicial complex $K$ is the vector space $C_d(K)$ over $\R$ with orthonormal basis $K_d$, and an element of $C_d(K)$ is a \EMPH{d-chain}. The \EMPH{d\textsuperscript{th} boundary map} is the linear map $\boundary_d:C_d(K)\to C_{d-1}(K)$ defined $\boundary_d\sigma = \sum_{i=0}^{d}(-1)^{i}(\sigma\setminus\{v_{k_i}\})$ for each simplex $\sigma = \{v_{k_0},\ldots,v_{k_d}\}\in K_d$, where we assume $v_{k_i}<v_{k_j}$ for $i<j$. The \EMPH{d\textsuperscript{th} coboundary map} is $\boundary_{d+1}^T:C_d(K)\to C_{d+1}(K)$. Elements of $\ker\boundary_d$ (resp. $\ker\boundary_{d+1}^T$) are \EMPH{cycles} (resp. \EMPH{cocyles}), and elements of $\im\boundary_{d+1}$ (resp. $\im\boundary_{d}^{T}$) are \EMPH{boundaries} or \EMPH{null-homologous cycles} (resp. \EMPH{coboundaries}.) Two cycles (resp. cocycles) $\gamma_1$ and $\gamma_2$ are \EMPH{homologous} (resp. \EMPH{cohomologous}) if their difference $\gamma_1-\gamma_2$ is a boundary (resp. coboundary.)
\par
The \EMPH{d\textsuperscript{th} Laplacian} is the linear map $L_d : C_d(K)\to C_d(K)$ defined $L_d = \boundary_{d}^{T}\boundary_d + \boundary_{d+1}\boundary_{d+1}^{T}$. The \EMPH{d\textsuperscript{th} up-Laplacian} is the linear map $L_d^{up} = \boundary_{d+1}\boundary_{d+1}^{T}$, and the \EMPH{d\textsuperscript{th} down-Laplacian} is the linear map $L_d^{down} = \boundary_{d}^{T}\boundary_{d}$.
\par
A key fact of algebraic topology is that $\boundary_d\boundary_{d+1} = 0$, hence $\im\boundary_{d+1}\subset\ker\boundary_d$, and $\im\boundary^T_d \subset \ker\boundary^T_{d+1}$.
The \EMPH{d\textsuperscript{th} homology group} is the quotient group $H_d(K) = \ker\boundary_d/\im\boundary_{d+1}$, and the \EMPH{d\textsuperscript{th} cohomology group} is the quotient group $H^{d}(K) = \ker\boundary^T_{d+1}/\im\boundary^T_d$.  Since $\im\boundary_d^T\oplus\ker\boundary_d$ and $\im\boundary_{d+1}\oplus\ker\boundary_{d+1}^{T}$ are two orthogonal decompositions of the $d$-chain space, the d\textsuperscript{th} homology group and the d\textsuperscript{th} cohomology group have the same rank, which is the \EMPH{d\textsuperscript{th} Betti number} of the complex, denoted $\beta_d(K)$.  We say two cycles are \EMPH{homologous} (resp.~\EMPH{cohomologous}) if they are in the same homology (resp.~cohomology) class, or equivalently, if their difference is a boundary (resp.~coboundary.)
\par
The \EMPH{Hodge Decomposition} is the orthogonal decomposition of the $d$\textsuperscript{th} chain group into $C_d(K) = \im(\boundary_{d+1}) \oplus \ker(L_d) \oplus \im(\boundary^{T}_{d})$. The subspace $\ker(L_d)$ are the \EMPH{harmonic chains}. Thus, any chain $x\in C_d(K)$ can be uniquely written as the sum $x = x_{bd} + x_{hr} + x_{cbd}$ where $x_{bd}\in\im(\boundary_{d+1})$, $x_{hr}\in\ker(L_d)$, and $x_{cbd}\in\im(\boundary^{T}_{d})$.


\par
A \EMPH{d-boundary basis}, \EMPH{d-coboundary basis} and \EMPH{d-harmonic basis} are bases for the boundary, coboundary and harmonic spaces.
A \EMPH{d-homology basis} is a maximal set of cycles such that no linear combination of these cycles is a boundary.
Similarly, a \EMPH{d-cohomology basis} is a maximal set of cocycles such that no linear combination of these cocycles is a coboundary.
We have the following fact.
\begin{fact}
\label{fact:homology_basis_to_harmonic_basis}
A set of cycles (resp.~cocycles) is a homology (resp.~cohomology) basis if and only if their projection into the harmonic space is a harmonic basis.
\end{fact}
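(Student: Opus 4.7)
The plan is to derive both directions of the biconditional from a single underlying isomorphism: the harmonic projection $\Pi_{hr}$, restricted to the cycle space, descends to a linear isomorphism $H_d(K) \cong \ker(L_d)$, and the analogous restriction to the cocycle space gives $H^d(K) \cong \ker(L_d)$. Once these isomorphisms are in hand, bases on one side must correspond to bases on the other, which is precisely what the fact asserts.

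First I would refine the Hodge decomposition $C_d(K) = \im(\boundary_{d+1}) \oplus \ker(L_d) \oplus \im(\boundary_d^T)$ to see which summands lie inside the cycle space $\ker(\boundary_d)$. Boundaries lie in cycles because $\boundary_d\boundary_{d+1}=0$. Harmonic chains lie in cycles because, for any $x \in \ker(L_d)$, the identity $0 = x^T L_d x = \|\boundary_d x\|_2^2 + \|\boundary_{d+1}^T x\|_2^2$ forces $\boundary_d x = 0$ (and simultaneously $\boundary_{d+1}^T x = 0$, which is needed for the cocycle version). Coboundaries, on the other hand, are orthogonal to every cycle by the fundamental theorem of linear algebra, so no nonzero coboundary is a cycle. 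Combining these observations yields $\ker(\boundary_d) = \im(\boundary_{d+1}) \oplus \ker(L_d)$, and a dual argument gives $\ker(\boundary_{d+1}^T) = \ker(L_d) \oplus \im(\boundary_d^T)$. From the first identity, $\Pi_{hr}$ restricted to $\ker(\boundary_d)$ is surjective onto $\ker(L_d)$ (every harmonic chain is a cycle fixed by $\Pi_{hr}$) with kernel exactly $\im(\boundary_{d+1})$, so it descends to a well-defined linear isomorphism $H_d(K) = \ker(\boundary_d)/\im(\boundary_{d+1}) \to \ker(L_d)$; symmetrically, $\Pi_{hr}$ restricted to $\ker(\boundary_{d+1}^T)$ descends to an isomorphism $H^d(K) \to \ker(L_d)$.

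Given these isomorphisms, the biconditional is a direct rephrasing. A set of cycles $\{\gamma_1, \ldots, \gamma_k\}$ is a homology basis iff $k = \beta_d(K)$ and no nonzero linear combination is a boundary, iff the classes $[\gamma_i]$ form a basis of $H_d(K)$, iff their images $\{\Pi_{hr}(\gamma_i)\}$ form a basis of $\ker(L_d)$, i.e., a harmonic basis. The cocycle case is identical with $\boundary_d$ and $\boundary_{d+1}^T$ interchanged. I do not expect any serious obstacle; the only point that needs a little care is the initial verification that harmonic chains are simultaneously cycles and cocycles, which rests on the positive-semidefiniteness of $L_d$ via its decomposition $L_d = \boundary_d^T\boundary_d + \boundary_{d+1}\boundary_{d+1}^T$ into up- and down-Laplacians.
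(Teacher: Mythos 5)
The paper states this Fact without any proof, treating it as standard background from Hodge theory, so there is no paper argument to compare against. Your proof is correct: the refinement $\ker(\boundary_d) = \im(\boundary_{d+1}) \oplus \ker(L_d)$ (via $x^T L_d x = \|\boundary_d x\|^2 + \|\boundary_{d+1}^T x\|^2$ for harmonic $x$, and orthogonality of cycles to coboundaries), the observation that $\Pi_{hr}$ restricted to cycles is surjective with kernel exactly $\im(\boundary_{d+1})$, and the induced isomorphism $H_d(K) \cong \ker(L_d)$ together give the biconditional directly; the cocycle case is symmetric.
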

Two cycles (resp.~cocycles) are homologous (resp.~cohomologous) if they have the same harmonic part, as then their difference is a boundary (resp.~coboundary). Accordingly, the previous fact implies that for any cycle (resp.~cocycle) $x$ and any homology basis (resp.~cohomology basis) $\Gamma$, there is a unique linear combination of the elements of $\Gamma$ that is homologous (resp.~cohomologous) to $x$; this is the linear combination of $\Gamma$ with the same harmonic component as $x$.
\par
A useful property of cohomology bases is they can be used to tell when two cycles are homologous, as described by the following fact.
\begin{fact}[Busaryev et al.~\cite{busaryev2012}]
\label{fact:homology_annotation}
    Let $x$ and $y$ be cycles (resp.~cocycles), and let $P$ be a cohomology basis (resp.~homology basis.) Then $y$ is homologous (resp.~cohomologous) to $x$ if and only if $x\cdot p = y\cdot p$ for all $p\in P$.
\end{fact}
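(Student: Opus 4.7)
The plan is to prove both directions simultaneously by exploiting orthogonality of the three summands in the Hodge decomposition and reducing the statement to a linear-algebraic claim about harmonic components. I will present only the cycle/cohomology-basis case, as the cocycle/homology-basis case is identical after swapping the roles of $\boundary_{d+1}$ and $\boundary_d^T$.

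First I would write each cycle via its Hodge decomposition. Since $x$ and $y$ are $d$-cycles, both live in $\ker\boundary_d = \im\boundary_{d+1}\oplus\ker L_d$, so $x = x_{bd} + x_{hr}$ and $y = y_{bd} + y_{hr}$ with the coboundary parts vanishing. Similarly, each $p\in P$ is a $d$-cocycle, so $p\in\ker\boundary_{d+1}^T = \ker L_d\oplus\im\boundary_d^T$ and we may write $p = p_{hr} + p_{cbd}$.

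The key computation is then to evaluate $x\cdot p$ using these decompositions and the fact that the three summands of the Hodge decomposition are mutually orthogonal. The cross terms $x_{bd}\cdot p_{hr}$, $x_{bd}\cdot p_{cbd}$, and $x_{hr}\cdot p_{cbd}$ all vanish because each pairs vectors from two different summands, so $x\cdot p = x_{hr}\cdot p_{hr}$, and likewise $y\cdot p = y_{hr}\cdot p_{hr}$. Hence $x\cdot p = y\cdot p$ for every $p\in P$ if and only if $(x_{hr}-y_{hr})\cdot p_{hr}=0$ for every $p\in P$.

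Now I would invoke Fact~\ref{fact:homology_basis_to_harmonic_basis}: since $P$ is a cohomology basis, the set $\{p_{hr} : p\in P\}$ is a basis for $\ker L_d$. Therefore $(x_{hr}-y_{hr})\cdot p_{hr}=0$ for every $p\in P$ forces $x_{hr}-y_{hr}$ to be orthogonal to all of $\ker L_d$, and since $x_{hr}-y_{hr}$ itself lies in $\ker L_d$, this forces $x_{hr}=y_{hr}$. Conversely, if $x_{hr}=y_{hr}$ the inner products clearly match. Finally, because $x$ and $y$ are cycles (no coboundary parts), $x_{hr}=y_{hr}$ is equivalent to $x-y\in\im\boundary_{d+1}$, i.e., to $x$ and $y$ being homologous. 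I do not expect a real obstacle here; the only subtlety is making sure to cite Fact~\ref{fact:homology_basis_to_harmonic_basis} to convert a cohomology basis into a harmonic basis so that the vanishing inner products pin down the harmonic component uniquely.
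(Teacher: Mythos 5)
The paper does not prove this statement; it is cited as a black box to Busaryev et al.\ and stated without an argument. Your proof is correct and self-contained: the observation that $x\cdot p = x_{hr}\cdot p_{hr}$ (because a cycle has no coboundary component, a cocycle has no boundary component, and the three Hodge summands are pairwise orthogonal) cleanly reduces the claim to the nondegeneracy of the inner product on the harmonic space, and Fact~\ref{fact:homology_basis_to_harmonic_basis} supplies exactly the harmonic basis needed to conclude $x_{hr}=y_{hr}$. The final equivalence between $x_{hr}=y_{hr}$ and $x-y\in\im\boundary_{d+1}$ follows from uniqueness of the Hodge decomposition. This is a clean derivation entirely within the paper's framework; it is arguably preferable to citing Busaryev et al., since it makes the relationship to the surrounding Fact~\ref{fact:homology_basis_to_harmonic_basis} explicit rather than incidental.
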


\subparagraph{Collapsibility.}
Let $K$ be a simplicial complex, $\sigma$ a $d$-simplex of $K$, and $\tau$ a $(d-1)$-simplex of $K$ that is a face of $\sigma$.  If $\tau$ is not the face of any other simplex, we say that $K$ \EMPH{collapses} into $K\backslash\{\sigma, \tau\}$; we refer to $(\sigma,\tau)$ as a \EMPH{collapse pair}.  Moreover, we say that a complex collapses to itself.  Inductively, we say that a complex $K$ \EMPH{collapses} into a complex $K'$ if there is a complex $K''$ such that $K$ collapses to $K''$ and $K''$ collapses to $K'$.  We say that a complex $K$ is \EMPH{collapsible} if it collapses to a single vertex.

When a complex $K$ collapses to a complex $K'$, we obtain a sequence of complexes $K = K_0\supset K_1 \supset \ldots \supset K_t = K'$, where for each $1\leq i\leq t$, $K_i$ can be obtained from $K_{i-1}$ by removing one collapse pair.  We refer to this sequence as a \EMPH{collapsing sequence}.
The complexes $K$ and $K'$ are homotopy equivalent if one collapses to the other, thus, $K$ and $K'$ have isomorphic homology group. In particular, a collapsible complex has trivial homology groups in every nonzero dimension.

\subparagraph{Embeddability.}
A $d$-dimensional simplicial complex $K$ is \EMPH{embedded} if $K\subset R$ for $R$ a triangulation of $\R^{d+1}$. Furthermore, $K$ is \EMPH{linearly embedded} if there is a homeomorphism from the underlying space $|R|$ to $\R^{d+1}$ that is linear on each simplex, i.e. each 1-simplex is mapped to a line segment, each 2-simplex is mapped to a triangle, etc. All embedded complexes in this paper will be linearly embedded.
\par
We will make use of the dual graph of an embedded complex. Informally, the \EMPH{dual graph} of an embedded complex is the graph $K^*$ with vertices that are the connected components of $R\setminus K$ and edges between two vertices if there is a $d$-simplex in $K$ incident to both connected components. Alternatively, the dual graph can be defined with vertices corresponding to a generating set of $d$-cycles of $K$. For this construction, see the definition of \EMPH{Lefschetz set} in the paper \cite{CollapsibleUniverseBlack22}.

\section{Overview}
Let $X$ be a collapsible simplicial complex embedded in $\R^3$, and let $K\subseteq X$ be a subcomplex of $X$.  We study two closely related problems: (i) computing the Hodge decomposition of the $1$-chains of $K$, and (ii) solving a linear system $L_1 x = b$, where $L_1$ is the $1$-Laplacian of $K$ (in the overview section, all the operators are with respect to $K$ unless mentioned otherwise.) These two problems are related, as our approximate Laplacian solver uses an approximate Hodge decomposition of the input vector $x$. More generally, understanding the Hodge decomposition is key to understanding this paper as many proofs rely on some property of the Hodge decomposition. Therefore, we begin our overview with an introduction to the Hodge decomposition.

\subsection{The Hodge Decomposition} The Hodge decomposition is a decomposition of the chain group $C_d(K)$ in terms of the kernels and images of the boundary operators $\boundary_d$ and $\boundary_{d+1}$ and their transposes. Specifically, the problems in this paper consider the first chain group $C_1(K)$, the two boundary operators $\partial_2$ and $\partial_1$, and their corresponding coboundary operators $\partial_2^T$ and $\partial_1^T$.  The boundary operator $\partial_2$ maps each (oriented) triangle to the edges in its boundary; similarly, $\partial_1$ maps each edge to its two endpoints.  A key fact is that $\partial_1\partial_2 = 0$, or equivalently, $\im(\partial_2) \subseteq \ker(\partial_1)$. This implies $\im(\partial_2)$ is orthogonal to $\im(\partial^T_1)$.  The former subspace $\im(\partial_2)$ is called the \EMPH{boundary subspace}, and the latter subspace $\im(\partial^T_1)$ is called the \EMPH{coboundary subspace}.  If $K$ has trivial $1$-homology, then $\im(\boundary_2) = \ker(\boundary_1)$, and the boundary and coboundary spaces give a full orthogonal decomposition of $C_1(K)$, called the \EMPH{Helmholtz decomposition}.  Otherwise, there is a third subspace orthogonal to both the boundary and coboundary subspaces, called the \EMPH{harmonic subspace}. The harmonic subspace is exactly $\ker(L_1) = \ker(\partial_1)\cap\ker(\partial_2^T)$.
\par
The boundary, coboundary, and harmonic subspaces give a full orthogonal decomposition of $C_1(K)$ called the \EMPH{Hodge decomposition}, which generalizes the Helmholtz decomposition.  Thus, we can express any $1$-chain $x$ as $x = x_{cbd} + x_{bd} + x_{hr}$, where $x_{cbd}$, $x_{bd}$ and $x_{hr}$ are the coboundary, boundary and harmonic part of $x$ and are pairwise orthogonal. The chains $x_{bd} + x_{hr}$ and $x_{cbd} + x_{hr}$ are called the \EMPH{cyclic} and \EMPH{cocyclic} parts of $x$ respectively.  Similarly, the space spanned by harmonic and boundary chains is called the \EMPH{cycle space}, and the space spanned by harmonic and coboundary chains is called the \EMPH{cocycle space}. It is implied by $\partial_1\partial_2 = 0$ that the cycle space and cocycle space are the kernels of $\partial_1$ and $\partial_2^T$, respectively. The following figure is an illustration of the Hodge decomposition.  Boundary, coboundary, harmonic, cycle, and cocycle spaces are shown using the abbreviations bd, cbd, hr, cyc, and cocyc respectively.

\begin{figure}[H]
    \centering
    \includegraphics[height=1.4in]{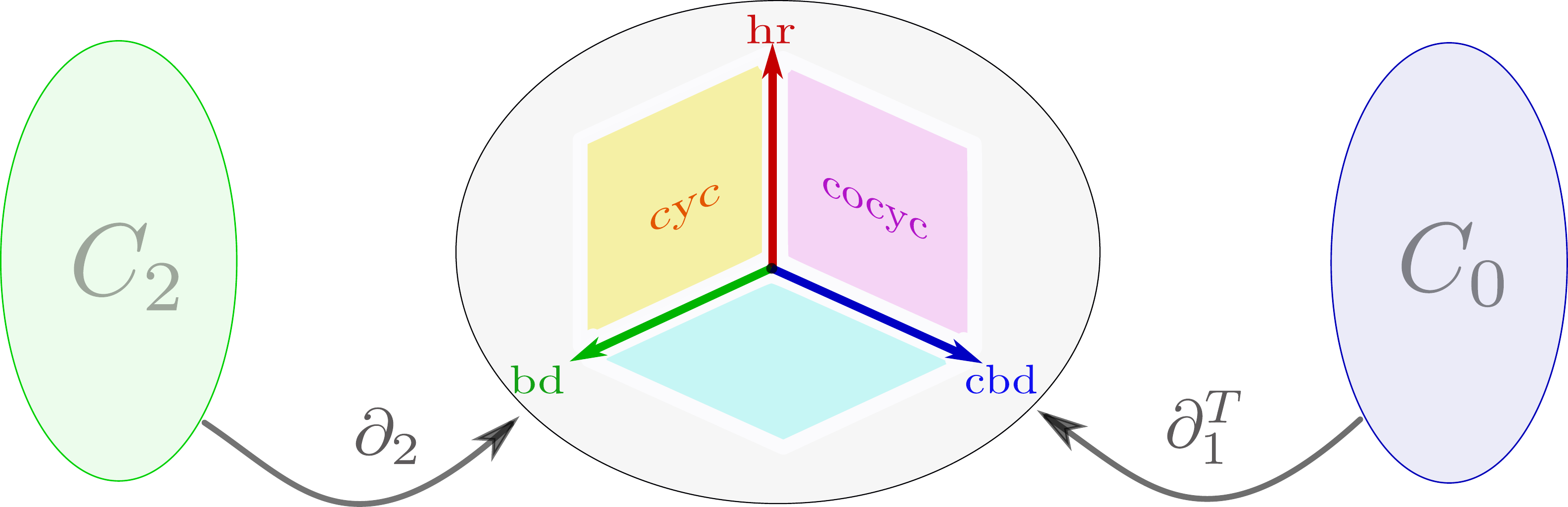}
    \label{fig:hodge}
\end{figure}

To compute the Hodge decomposition, one seeks orthogonal projection operators into the coboundary, boundary and harmonic subspaces.  Let $\Pi_{cbd}$, $\Pi_{bd}$, and $\Pi_{hr}$ denote these projection operators. Cohen et al.~show that for any $1$-chain $x$, its projection into the coboundary space, $\Pi_{cbd} x$, and cycle space, $\Pi_{cyc} x$, can be approximated quickly with operators $\widetilde\Pi_{cbd}$ and $\widetilde\Pi_{cyc}$. These projection operators are a key ingredient of their $1$-Laplacian solver, as well as the more recent $1$-Laplacian solver described by Black et al.; however, both papers are restricted to cases where the first homology group $H_1(K) = 0$. In this paper, we show that for any $x$, its projection into the boundary space, $\Pi_{bd} x$, can also be approximated quickly. This new projection operator will allow us to generalize the $1$-Laplacian solver of Black et al.~to complexes with arbitrary first homology. We also give an approximate projection operator into the harmonic space, but our approximation guarantee for this projection operator is weaker (more below).

\subsection{Laplacian Solvers.}

The $1$-Laplacian matrix is defined $L_1 = \partial_2\partial_2^T + \partial_1^T\partial_1$. To solve a linear system $L_1 x = b$, one seeks to approximate $L_1^+$, the pseudoinverse of $L_1$. As the images of $\partial_2\partial_2^T$ and $\partial_1^T\partial_1$ are orthogonal, then $L_1^{+} = (\partial_2\partial_2^T)^{+} + (\partial_1^T\partial_1)^+$ (see Campbell~\cite[Theorem 3.1.1]{CampbellBook79}). Therefore,  one can approximate $L_1^{+}$ by approximating $(\partial_2\partial_2^T)^+$ and $(\partial_1^T\partial_1)^+$ individually.
Computing $(\partial_1^T\partial_1)^+$ is purely a graph problem as $\partial_1$ is only defined with respect to the vertices and edges of a complex. Cohen et al.~show how to approximate  $(\partial_1^T\partial_1)^+$ for general complexes \cite[Lemma 3.2]{OneLaplaciansCohen14}. Approximating $(\partial_2\partial_2^T)^+$ is a more challenging problem that requires taking into account the relationship between triangles and the edges. Our algorithm for approximating $(\partial_2\partial_2^T)^+$ relies on our new boundary projection operator, the collapsibility of $X$, and the embedding of $X$ in $\R^3$.
\par
Cohen et al.~show how to approximate $(\partial_2\partial_2^T)^+$ for collapsible complexes embedded in $\R^{3}$. Black et al.~generalize their work to obtain an approximate solver for a subcomplex of a collapsible complex in $\R^{3}$ provided the subcomplex has trivial homology.  Their solver is based on the following general lemma regarding approximations of $(BB^T)^+$ for a general matrix $B$.

\begin{restatable*}[Black et al.~\cite{CollapsibleUniverseBlack22}, Lemma 4.1]{lemma}{approximationbbt}
\label{lem:black_etal_bbt}
    Let $B$ be a linear operator, let $0\leq \varepsilon < 1$,
    and let $\T\Pi_{\im(B)}$ and $\T\Pi_{\ker^{\perp}(B)}$ be symmetric matrices such that
    $(1-\varepsilon) \Pi_{\im(B)} \preceq \T\Pi_{\im(B)}\preceq \Pi_{\im(B)}$, and
    $(1-\varepsilon)\Pi_{\ker^{\perp}(B)}\preceq \T\Pi_{\ker^{\perp}(B)}\preceq \Pi_{\ker^{\perp}(B)}$.
    Also, let $U$ be a linear map such that for any $y\in \im(B)$, $BUy = y$.  We have
    \[
    (1-(2\kappa+1)\varepsilon)(BB^T)^+ \preceq
    \T\Pi_{\im(B)}U^T \T\Pi_{\ker^{\perp}(B)} U \T\Pi_{\im(B)} \preceq
    (1+\kappa\varepsilon)(BB^T)^+,
    \]
    where $\kappa$ is the condition number of $BB^T $ within the image of $B$.
\end{restatable*}

This lemma shows the following linear operators are sufficient for approximating $(\partial_2\partial^T_2)^+$.

\begin{enumerate}
    \item [(i)] An operator $U$ that for $1$-boundaries $y\in\im(\partial_2)$ returns a $2$-chain $x = Uy$ such that $\boundary_2 x = y$. For other vectors $z\notin\im(\partial_2)$, $U$ can return anything as long as $U$ is still linear.
    \item [(ii)] An approximate orthogonal projection operator into $\im(\partial_2^T)$, the coboundary space of $2$-chains.
    \item [(iii)] An approximate orthogonal projection operator into $\im(\partial_2)$, the boundary space of $1$-chains.
\end{enumerate}

Black et al.~describe an algorithm for computing $U$ that uses the collapsibility and embedding of the supercomplex $X$.  Cohen et al.~show that the 2-coboundary space of embedded complexes is dual to the 1-cycle space of the dual graph, hence projection into this space can be approximated using $\T\Pi_{cyc}$. Finally, lacking an approximate projection into the boundary space of 1-chains, they needed to assume that their complex has trivial first homology (i.e. that $\im(\boundary_2) = \ker(\boundary_1)$) so that they can instead use the projection operator into the cycle space of Cohen et al. The boundary projection operator described in this paper allow us to remove that assumption to obtain a solver for any subcomplex $K$ of $X$.  The running time of our new solver polynomially depends on the rank of the homology group and nearly-linearly depends on the size of the complex. We give a complete analysis of our solver in Appendix~\ref{sec:solver}.

\subparagraph{} In the rest of this section, we sketch the high level ideas for computing our approximate projection operators.  But before we can do that, we need to explain the two notions of approximations that are used in this paper.

\subsection{Loewner order approximation}
We use the Loewner order on positive semidefinite matrices to specify the approximation quality of our projection and pseudoinverse operators.
We see two types of approximation guarantees in this paper for an operator $A$: \textit{\textbf{input-relative error bounds}} of the form $-\eps I \preceq A - \T A \preceq \eps I$ and \textit{\textbf{output-relative error bounds}} of the form $-\eps A \preceq A - \T A \preceq \eps A$.
Note for any vector $x$, an input relative error bound implies
$\|(A - \T A)x\| \leq \eps \|x\|$---the error is bounded relative to the size of the input $x$---while an output-relative error bound implies $\|(A-\T A)x\|\leq \eps \|A x\|$---the error is bounded relative to the size of the output $Ax$. An approximate operator with a small input-relative error can have arbitrarily large output-relative error, for example when $x$ is in the kernel of $A$. Further, output-relative error bounds are stronger if the norm of $\|A\|$ is at most one, i.e.~$\|Ax\|\leq \|x\|$, which is the case for the orthogonal projection operators of the Hodge decomposition.

We achieve an output-relative error bound for our approximation of $(L_1[K])^+$. Further, we achieve an output-relative error bound for our approximation $\T\Pi_{bd}$ of $\Pi_{bd}$, but an input-error bound for our approximation $\T\Pi_{hr}$ of $\Pi_{hr}$:
\begin{equation}
\label{eqn:bd_proj}
-\eps I\preceq -\eps\Pi_{bd}\preceq \Pi_{bd} - \T\Pi_{bd}(\eps)  \preceq
\eps\Pi_{bd} \preceq \eps I,
\end{equation}
and
\begin{equation}
\label{eqn:hr_proj}
-\eps I \preceq \Pi_{hr} - \T\Pi_{hr}(\eps) \preceq \eps I.
\end{equation}
Previously, Cohen et al.~had shown approximation operators $\T\Pi_{cbd}$ and $\T\Pi_{cyc}$ for projecting into the coboundary and cycle spaces with output-relative error bounds:
\begin{equation}
\label{eqn:cbd_proj}
-\eps I \preceq -\eps\Pi_{cbd}\preceq \Pi_{cbd} - \T\Pi_{cbd}(\eps) \preceq \eps\Pi_{cbd} \preceq \eps I,
\end{equation}
and
\begin{equation}
\label{eqn:cyc_proj}
-\eps I \preceq -\eps\Pi_{cyc}\preceq \Pi_{cyc} - \T\Pi_{cyc}(\eps) \preceq \eps\Pi_{cyc} \preceq \eps I.
\end{equation}
We use these operators multiple times in our algorithms.  For simplification, we drop the explicit mention of the parameter $\eps$ when it is clear from the context in the overview.

\subsection{Projection operators.}

We first describe our algorithm for computing $\T\Pi_{hr}$ (an overview of Section~\ref{sec:harm_of_cohom} and Section~\ref{sec:harmonic_proj}).  Based on that and the operator $\T\Pi_{cbd}$ of Equation (\ref{eqn:cbd_proj}), we show how to compute $\T\Pi_{bd}$ (an overview of Section~\ref{sec:boundary_proj}).

\subparagraph{Harmonic projection.}
We compute our approximate harmonic projection operator $\T \Pi_{hr}$ by computing an approximate orthonormal basis $\T G = \{\T g_1, \ldots, \T g_\beta\}$ of the harmonic space.
We then define the approximate projection into the harmonic space to be the linear map $\T \Pi_{hr} = \sum_{i=1}^{\beta} \tilde{g_i}\tilde{g_i}^{T}$.

To compute $\T G$, our algorithm starts with a cohomology basis $P=\{p_1, \ldots, p_\beta\}$; the algorithm for computing $P$ is given at the end of this section.  From $P$, it computes $\T H = \{\T h_1, \ldots, \T h_\beta\}$, where $\T h_i = \T \Pi_{cyc} p_i$ and $\T\Pi_{cyc}$ is the projection operator of Equation (\ref{eqn:cyc_proj}). The set $\T H$ is an approximate harmonic basis, but it is not orthonormal. Next, we normalize $\T H$ to obtain $\T N = \{\T h_1/\|\T h_1\|, \ldots, \T h_\beta/\|\T h_\beta\|\}$. Finally, we run Gram-Schmidt on $\T N$ to obtain $\T G$.

To see why $\T{G}$ is an approximate basis for the harmonic space, let us consider a much easier analysis assuming we can use the exact projection in the cycle space $\Pi_{cyc}$ instead of the approximate projection $\T \Pi_{cyc}$. Instead of $\T H$, $\T N$ and $\T G$, let $H = \{h_1, \ldots, h_\beta\}$, $N = \{h_1/\|h_1\|, \ldots, h_\beta/\|h_\beta\|\}$ and $G=\{g_1, \ldots, g_\beta\}$ be the sets of vectors we obtain when we use the exact projection operator.  In that case,
$h_i = \Pi_{cyc} p_i$ is the harmonic part of $p_i$; this is because $p_i$ is a cocycle, so projecting it into the cycle space is the same as projecting it into the harmonic space.  It follows from Fact \ref{fact:homology_basis_to_harmonic_basis} in Section \ref{sec:background} that $G$ is an exact orthonormal basis of the harmonic space, thus it defines an exact projection operator into the harmonic space.

In the real scenario where we work with the approximate projection operator $\T\Pi_{cyc}$, two undesirable things can happen. First, we can no longer guarantee that the vectors in $\T N$ are purely harmonic, as the error introduced by the approximate operator $\T\Pi_{cyc}$ may be part boundary. However, this is not an issue, as we can make the boundary components of $\T N$ sufficiently small by approximating $\T\Pi_{cyc}$ more accurately. Second, and more importantly for our application, the spaces spanned by $N$ and $\T N$ can be very different, even if the vectors $N$ and $\T N$ are pairwise close. As an example, imagine that we have two pairs of vectors $N=\{\eta_1,\eta_2\}$ and $\T N = \{\T \eta_1,\T \eta_2\}$ such that $\|\eta_i - \T \eta_i\| < \eps$ for $i=1,2$. We might guess that the two spaces spanned by $N$ and $\T N$ are similar as the vector are close, but if $\eta_1$ and $\eta_2$ are also close, then the two vectors spaces can be drastically different. Figure \ref{fig:delta_independence} gives an illustration of this, where $N$ is the set of blue vectors and $\T N$ is the set of red vectors.  As illustrated in the figure, the space spanned by $N$ and the space spanned by $\T N$ can be drastically different.

\begin{figure}[h!]
    \centering
    \includegraphics[height=2in]{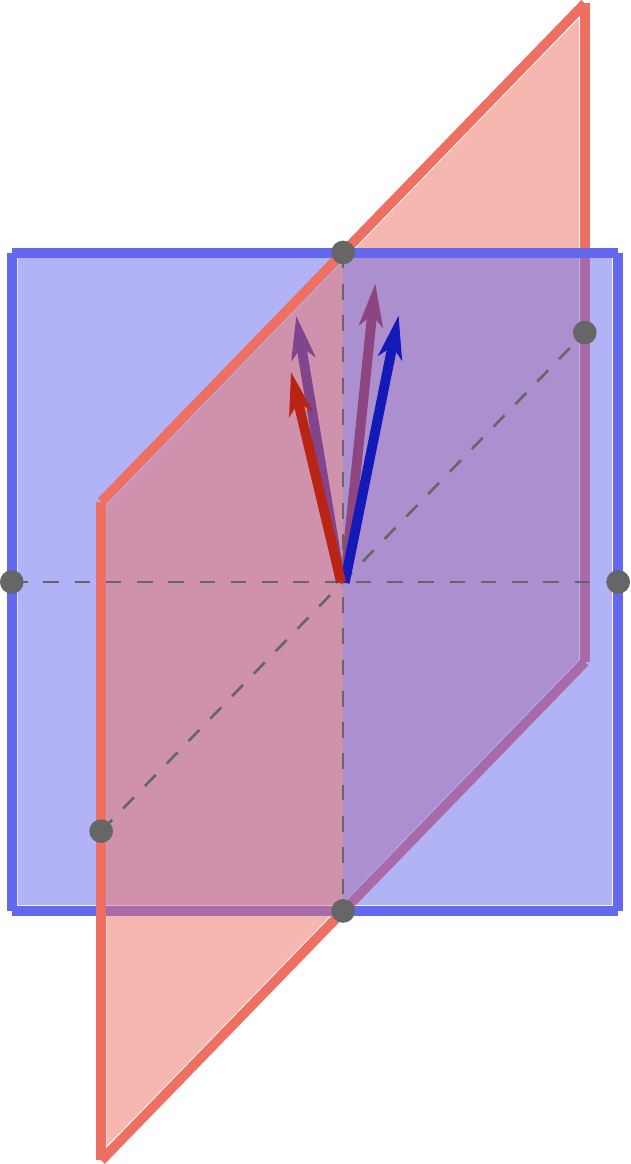}
    \caption{Pairwise closeness between a set of vectors $N$ and $\T N$ is not enough to guarantee the spaces spanned by $N$ and $\T N$ are close! The red and blue vectors are pairwise close, but the spaces they span are very different.}
    \label{fig:delta_independence}
\end{figure}

 We can remedy this if we approximate $\T N$ within a sufficiently small error $\eps$ of $N$, but this new error bound needs to take into account the similarity of the vectors in $N$. The question is how accurately we need to approximate $\T\Pi_{cyc}$ to obtain a sufficently small approximation error for $\T N$. To answer this question, we define a measure of linear independence of $N$ called its $\delta$-independence.
Formally, we say that $N$ is \EMPH{$\boldsymbol{\delta}$-independent} if each vector $h_i/\|h_i\|\in N$ is at distance at least $\delta$ from the span of the other vectors of $N$. Intuitively, larger $\delta$ means $N$ is more independent, in the sense that the elements are well-separated. The smaller the $\delta$, the more accurately we need to approximate $\T\Pi_{cyc}$ to ensure that $N$ and $\T N$ will span similar spaces. This intuition is summarized by the following lemma, showing the error in projection into $N$ as a function of $\delta$, $\eps$, and $\beta$, where $\eps$ bounds the difference between $N$ and $\T N$.

\begin{restatable*}{corollary}{approximategramschmidt}
\label{cor:approximate_gram_schmidt_operator}
    Let $0< \delta < 1$, and let $0<\eps<\left(\frac{\delta}{8\beta}\right)^{\beta}$. Let $N=\{\eta_1,\ldots,\eta_\beta\}$ be a set of $\delta$-linearly independent unit vectors, and let $\T N=\{\T \eta_1,\ldots,\T \eta_\beta\}$ be a set of unit vectors such that $\| \eta_i - \T \eta_i \| < \eps$.
    Let $G=\{g_1,\ldots,g_\beta\}$ be the orthonormal basis that is the output of running Gram-Schmidt on $N$, and let $\T G = \{\T g_1,\ldots,\T g_\beta\}$ be the output of running Gram-Schmidt on $\T N$.
    Then $\left\| \Pi_{\spn N\vphantom{\Tilde{N}}} - \Pi_{\spn\Tilde{N}} \right\| = \left\| \sum_{i=1}^{\beta} g_ig_i^T - \T g_i\T g_i^T \right\| < 2\cdot \beta\cdot \left(\frac{8\beta}{\delta}\right)^\beta\eps$.
\end{restatable*}

The difficulty here is actually determining a lower bound on the $\delta$-independence of $N$. We have access to the cohomology basis $P$, but we need the (normalized) harmonic parts of $P$ to be $\delta$-independent. Note that $P$ can be composed of vectors that are very strongly independent, yet their harmonic parts may only be weakly independent, for example, when the vectors of $P$ have similar harmonic parts but very different coboundary parts.
\par
We show that if $P$ is composed of integer vectors with maximum length $p_{\max}$, then $P$ being linearly independent implies that $H$ is $\delta$-independent for a $\delta \sim 1/(p_{\max}\cdot n_1)^{\beta}$, where $n_1$ is the number of edges in $K$.
In addition to the properties of $P$, our proof of Lemma~\ref{lem:delta_indep_harm} relies on the total unimodularity of $\partial_1$.

\begin{restatable*}{lemma}{deltaindependentharmonic}
\label{lem:delta_indep_harm}
Let $K$ be a simplicial complex with $n_1$ edges such that $H_1(K) = \beta$.
Let $\{p_1, \ldots, p_\beta\}$ be a $1$-cohomology basis for $K$ such that each $p_i$ is an integer vector with maximum Euclidean norm $p_{\max}$.
Let $h_i$ be the harmonic part of $p_i$ for $1\leq i \leq \beta$.
Then
\begin{enumerate}
    \item [(i)] $\|h_i\| \geq 1/(\sqrt{n_1}\cdot p_{\max})^{\beta}$ for each $1\leq i \leq \beta$, and
    \item [(ii)] $\{h_1/\|h_1\|, \ldots, h_\beta/\|h_\beta\|\}$ is $\left(1/(\sqrt{n_1}\cdot p_{\max})^{\beta}\right)$-independent.
\end{enumerate}

\end{restatable*}

The question remains of how to find $P$. Dey~\cite{dey2019basis} describes a nearly-linear time algorithm for computing a homology basis composed of vectors with coordinates in $\{-1, 0, +1\}$.  Black et al.~\cite{CollapsibleUniverseBlack22} describe an operator $C$ that when applied to a homology basis returns a cohomology basis. We use the cohomology basis $P$ obtained by applying the operator $C$ to Dey's homology basis. The proof of Corollary~\ref{cor:cohom_harmonic_prop} shows that $P$ is composed of vectors whose lengths $p_{\max}$ are bounded above by a polynomial function of the number of simplices of $X$ and $1/\lambda_{\min}(X)$; we prove this lemma by combining a bound on the length of the homology basis with a bound on the operator norm $\|C\|$ (Lemma~\ref{lem:bound_Cgamma}). Passing this cohomology basis $P$ to the algorithm above, we obtain $\T H$, $\T N$, $\T G$, $\T\Pi_{hr}$ as desired. The exact approximation quality of the approximate harmonic basis and approximate harmonic projection are given in Lemma \ref{lem:harmonic_projection} in the introduction.

\subparagraph{Boundary projection.}
It follows from the Hodge decomposition that the the projection into the boundary space can be written $\Pi_{bd} = I - \Pi_{cbd} - \Pi_{hr}$. We have approximate projections $\T\Pi_{cbd}$ and $\T\Pi_{hr}$ with input-relative error bounds (Equations (\ref{eqn:cbd_proj}) and (\ref{eqn:hr_proj}) respectively),  so we immediately obtain a boundary projection with input-relative error bound
\[
\overline\Pi_{bd} = I - \T\Pi_{cbd} - \T\Pi_{hr} \Longrightarrow -\eps I \preceq \Pi_{bd} - \overline\Pi_{bd} \preceq \eps I.
\]
However, we need a boundary projection operator with an output-relative bound for our solver.
Unfortunately, the operator $\overline\Pi_{bd}$ can have arbitrarily bad output-relative error.
Specifically, for any vector $x$ that is orthogonal to the boundary space, this operator has unbounded output-relative error as $\Pi_{bd} x = 0$.

We instead use $\overline\Pi_{bd}$ as a starting point for a projection operator with bounded output-relative error
.  To that end, let's revisit the issue of input vs.~output relative error.  Let $x = x_{bd} + x_{cocyc}$ be any vector decomposed into its boundary and cocycle parts.  The input-relative error bound of $\overline\Pi_{bd}$ is proportional to $\|x\| = \|x_{bd} + x_{cocyc}\|$, while for output-relative we need the bound to be proportional to $\|\Pi_{bd} x\| = \|x_{bd}\|$.  Therefore, a problem arises if $x_{cocyc}$ is much larger than $x_{bd}$; provided a bound on $\|x_{cocyc}\|/\|x_{bd}\|$, we can accordingly modify the accuracy of our projection operators $\T\Pi_{cbd}(\eps)$ and $\T\Pi_{hr}(\eps)$ to ensure $\overline\Pi_{bd}$ has small output-relative error for $x$.  Unfortunately, $\|x_{cocyc}\|/\|x_{bd}\|$ can be unbounded. To counteract this, we show that we can map $x$ to a different vector $x'$ before passing it to $\overline\Pi_{bd}$ such that (1) $x'$ has the same boundary component as $x$ (so $\Pi_{bd}\cdot x = \Pi_{bd}\cdot x'$), and (2) $\|x'_{cocyc}\|/\|x'_{bd}\|$ is bounded.

Specifically, our boundary projection operator is defined $\T\Pi_{bd} = (I-P_\Gamma)(I-P_T) \overline\Pi_{bd} (I-P_T)^T(I-P_\Gamma)^T$, defined based on two operators $P_T$ and $P_\Gamma$. The former was introduced by Cohen et al.~to obtain $\T\Pi_{cyc}$, and the latter is introduced in this paper; we sketch the ideas of both in this overview.  The operator $(I-P_T)^T(I-P_\Gamma)^T$ behaves as we need: it maps $x$ to a chain $x'$ with the same boundary component as $x$ and a relatively bounded cocycle part. We now describe $P_T$ and $P_\Gamma$.

Let $T$ be any spanning tree of the $1$-skeleton of $K$. $P_T$ is the operator that maps any $1$-chain to the unique $1$-chain with the same boundary in $T$. In particular, for any $1$-chain $x$, $(I-P_T)x$ is a cycle.

Next, let $\Gamma = \{\gamma_1, \ldots, \gamma_\beta\}$ be a $1$-homology basis in $K$.  $P_\Gamma$ is the operator that maps any $1$-cycle to the unique linear combination of $\Gamma$ that is in the same homology class. In particular, for any $1$-cycle $x$, $(I-P_\Gamma)x$ is a boundary.

Now let $F = (I-P_T)^T(I-P_\Gamma)^T$, so $F^T = (I-P_\Gamma)(I-P_T)$.
Consider any vector $x = x_{bd} + x_{cbd} + x_{hr}$.  We investigate what $F$ does to each of the three constituents of $x$; what can we say about $Fx_{bd}$, $Fx_{cbd}$ and $Fx_{hr}$? In what follows, we frequently use the fact that for any linear map $A$, $\ker(A)$ and $\im(A^T)$ orthogonally decompose the domain of $A$.

$(I-P_T)$ maps any $1$-chain to a cycle and $(I-P_\Gamma)$ maps any cycle to a boundary cycle; thus, $\im(F^T)$ is a subset of the boundary space.  It follows that $\ker(F)$ is a superset of the orthogonal complement of the boundary space, which is the cocycle space. So, $F$ maps any cocycle to zero, in particular, $Fx_{cbd} = 0$ and $Fx_{hr} = 0$. It remains to investigate $F x_{bd}$.

$P_T$ maps any cycle to zero, so $\ker(P_T)$ includes the cycle space; hence, $\im(P_T^T)$ is a subset of the orthogonal complement of the cycle space, which is the coboundary space. In particular, $\im(P_T^T)$ is a subset of the cocycle space. In addition, $P_\Gamma$ maps all boundary cycles to zero, so $\ker(P_\Gamma)$ includes the boundary space; hence, $\im(P_\Gamma^T)$ is within the orthogonal complement of the boundary space, which is the cocycle space.
Now consider
$$
Fx_{bd} = (I-P_T)^T(I-P_\Gamma)^Tx_{bd} = I x_{bd} - P_T^T(I-P_\Gamma)^Tx_{bd} - P_\Gamma^T x_{bd} = x_{bd} + x'_{cocyc},
$$ and observe that $x'_{cocyc}$ is indeed in the cocycle space as $\im(P_T^T)$ and $\im(P_\Gamma^T)$ are both within this space.

Overall, $Fx_{cbd} = 0$, $Fx_{hr} = 0$ and $F x_{bd} = x_{bd} + x'_{cocyc}$, so $F x$ has the same boundary part as $x$.  Moreover, the norm of the cocyclic part of $F x$, $\|x'_{cocyc}\|$, can now be bounded by $\|F\|\cdot \|x_{bd}\|$, as it is produced by applying $F$ to $x$.
The proof of Lemma 3.2 in Cohen et al.~and Lemma~\ref{lem:PGamma} and Corollary~\ref{cor:PGamma} of this paper provide a bound for $\|F\|$ that is dependent on the number of simplices of $X$, the smallest non-zero eigenvalue of the up-Laplacian of $X$, and the first Betti number of $K$. The accuracy and time complexity of the approximate boundary solver $\widetilde\Pi_{bd}$ are described in Lemma \ref{lem:boundary_form_cbd_harm} in the introduction.

\bibliography{main.bib}

\newpage
\appendix
\section{Harmonics of a Cohomology basis}
\label{sec:harm_of_cohom}

Recall $X$ is a collapsible complex embedded in $\R^3$, and $K\subset X$ is a subcomplex of $X$. Provided a homology basis for $K$, Black et al.~show how to compute a cohomology basis for $K$.  Black et al.~combine this result with Dey's algorithm for computing a homology basis (described in Lemma \ref{lem:tamal_hom_basis}) to compute a cohomology basis for any linearly embedded complex in $\R^3$.
\begin{lemma}[Dey~\cite{dey2019basis}]
\label{lem:tamal_hom_basis}
For a 2-dimensional simplicial complex $K$ linearly embedded in $\R^3$, there exists an algorithm computing a basis for $H_1(K, \R)$ in $O(n \log n + n \beta_1)$ time, where $n$ is the complexity of $K$.  Further, the basis is composed of vectors with all coordinates from $\{-1, 0, +1\}$.
\end{lemma}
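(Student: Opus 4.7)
The plan is to exploit the embedding into $\R^3$ to reduce the problem to graph-theoretic operations on the primal and dual 1-skeletons, avoiding any expensive linear algebra on chain groups directly.

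First I would compute a spanning tree $T$ of the 1-skeleton of $K$ in $O(n\log n)$ time (even $O(n\alpha(n))$ suffices). Every non-tree edge $e$ determines a \emph{fundamental cycle} $\gamma_e$, namely the unique cycle contained in $T\cup\{e\}$, oriented so that the coefficient of $e$ is $+1$. Each $\gamma_e$ has coordinates in $\{-1,0,+1\}$ by construction, and the collection $\{\gamma_e\}$ generates $\ker\partial_1$. The homology basis will be extracted as a subset of these fundamental cycles, which automatically gives the claimed coefficient bound.

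The main task is to decide which $\beta_1$ fundamental cycles to keep, i.e. to find a subset that is linearly independent modulo $\im\partial_2$. The plan is to annotate each fundamental cycle by a $\beta_1$-dimensional vector recording its homology class in a fixed cohomology basis, and then to run Gaussian elimination on these annotations; this selects $\beta_1$ fundamental cycles whose annotations are linearly independent, which is exactly a homology basis. To produce the annotation efficiently one uses the embedding: by Alexander duality for complexes linearly embedded in $\R^3$, a cocycle basis of $K$ can be read off from cycles in the dual graph (pair each connected component of $\R^3\setminus K$ with a dual vertex, put a dual edge through each 2-simplex of $K$), and a fundamental cycle's annotation entry with respect to a dual cycle is simply the signed count of how many times the primal cycle's edges are crossed by that dual cycle. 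Building the dual graph, extracting a tree-cotree decomposition of it, and processing all $O(n)$ fundamental cycles against all $\beta_1$ dual annotations takes $O(n\beta_1)$ total time.

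Finally, Gaussian elimination on the $n$ annotation vectors of length $\beta_1$ costs $O(n\beta_1)$ and outputs a set of $\beta_1$ primal fundamental cycles with independent annotations; these form the desired homology basis, and because we never linearly combine the primal cycles themselves (we only eliminate on annotations and select representatives), each returned cycle retains coordinates in $\{-1,0,+1\}$. The main obstacle, and the essential content of Dey's argument, is establishing that annotating with dual cycles via the $\R^3$-embedding really does detect homology in $K$ (this is where Alexander duality and the linear embedding hypothesis are used) and organizing the annotation so that the total cost is $O(n\log n + n\beta_1)$ rather than the naive $O(n\beta_1^2)$ or worse; everything else is bookkeeping on top of a spanning tree computation.
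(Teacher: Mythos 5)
This lemma is cited from Dey~\cite{dey2019basis}; the paper does not prove it, so there is no internal proof to compare against. What you have written is your own reconstruction of why such an algorithm should exist, and it has some real gaps.

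The most serious issue is one you acknowledge but do not resolve. You first assert that Gaussian elimination on $n$ annotation vectors of length $\beta_1$ costs $O(n\beta_1)$, and then in your final sentence you observe that the naive cost is $O(n\beta_1^2)$ and that getting below this is ``the essential content of Dey's argument.'' These two statements contradict each other, and the second one is the honest one: reducing each new length-$\beta_1$ vector against the up to $\beta_1$ pivot rows already collected costs $\Theta(\beta_1^2)$ per vector in the worst case, so the selection step as you describe it is $O(n\beta_1^2)$, not $O(n\beta_1)$. You gesture at the gap but leave the key step unproved, which means the claimed runtime is not established.

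The annotation step is also not precise enough to be checkable. You propose pairing a fundamental cycle $\gamma_e$ with a dual cycle by ``the signed count of how many times the primal cycle's edges are crossed by that dual cycle.'' But dual edges pass transversally through $2$-simplices of $K$, and primal edges are $1$-simplices; two generic $1$-dimensional objects in $\R^3$ do not cross. The correct invariant here is the linking number of the primal $1$-cycle with the dual $1$-cycle in $\R^3$, which is a more delicate quantity to compute than a crossing count, and it is exactly the point at which Alexander duality has to be invoked carefully. As stated, your annotation recipe does not yield a well-defined number, let alone one computable within the stated budget. Finally, to my knowledge Dey's algorithm does not proceed by annotating all fundamental cycles and running Gaussian elimination; it uses a persistence-style sweep that exploits the linear embedding, so even if your gaps were filled you would be reproving the lemma by a different route rather than reconstructing the cited argument.
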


In this paper, we use Black et al.'s cohomology basis to obtain a harmonic basis. In this section, we show that their cohomology basis has certain properties. First, we briefly sketch their algorithm (Section~\ref{subsec:Black_algorithm}).  Next, we show that the vectors in their basis have bounded lengths (Section~\ref{subsec:bounding_cohom_basis}).  Finally, we show that the harmonic projection of their cohomology basis results in harmonic vectors that are `sufficiently' independent (Section~\ref{subsec:hr_part_cohom_basis}), a property we need to obtain our harmonic projection operator.

\subsection{Black et al.~algorithm sketch}
\label{subsec:Black_algorithm}
Black et al.~\cite{CollapsibleUniverseBlack22} define an operator $C = C(X, K)$ that returns a cohomology basis if applied to a homology basis.
To that end, they compute an intermediate complex $T$ that is a maximal complex with the following two properties: (1) $K\subset T\subset X$, and (2) $H_2(T) = H_2(K)$.  Then, they define
\begin{equation}
\label{eqn:C}
C(X, K) = N(K,X)^T\circ F^T(X)\circ S^T(X,T)\circ \Pi_{C_2(T)^\perp}\circ S(X,T)\circ F(X) \circ N(K,X),
\end{equation}
where the operators $N(K, X)$, $F(X)$, $S(X,T)$ and $\Pi_{C_2(K)^\perp}$ are defined as follows:
\begin{itemize}
    \item $N(K,X): C_1(K)\rightarrow C_1(X)$, the Include operator, maps a $1$-chain in $K$ to the same chain in $X$.
    \item $F(X): C_1(X)\rightarrow C_2(X)$, the Fill operator, maps a $1$-cycle $\gamma$ in $X$ to a $2$-chain $x$ in $X$ such that $\partial_2[X]x = \gamma$.  To compute $x$, the Fill algorithm uses the collapsing sequence of $X$.  For each tetrahedron-triangle collapse $(\tau, t)$ the fill operator sets $x[t] = 0$.  For each triangle-edge collapse $(t, e)$, the value of $x[t]$ is determined by $\gamma[e]$, the Fill algorithm fixes this value, and recurse to compute the rest of $x$.
    \item $S(X, T): C_2(X)\rightarrow C_2(T)$, the Squeeze operator, maps a $2$-chain $x$ in $X$ to a $2$-chain $x'$ in $T$ with the same boundary, i.e.~$\partial_2[X]x = \partial_2[T]x'$.
    The Squeeze algorithm iteratively removes triangles in $X\backslash T$ and updates $x$ to keep its boundary invariant. In the end, the algorithm obtains $x'$ in $T$.

    Let $\{\sigma_1, \ldots, \sigma_k\}$ be the set of triangles in $X\backslash T$, and let $X_i = X\backslash\{\sigma_1, \ldots, \sigma_i\}$.  In particular, $X_0 = X$ and $X_k = T$.
    The Squeeze operator builds a sequence of $2$-chains $x = x_0, \ldots, x_k = x'$, such that for $0\leq i\leq k$, $x_i\in X_i$, and $\partial_2[X_i]x_i = \partial_2[X]x$.
    The $\sigma_i$'s are ordered so that each $\sigma_i$ is a face of a tetrahedron $\tau_i$ in $X_i$.  Such an order can be obtained via any graph traversal algorithm (e.g. BFS) on the dual graph of $X$ restricted to edges that are dual to $X\backslash T$ starting from the unbounded volume.  The traversal algorithm orders the edges based on their discovery time.  This order is equivalent to an order of the triangles of $X\backslash T$ with our desired property.

    At step $i$, the Squeeze algorithm removes $\sigma_i$, and modifies $x_i$ on the other faces of $\tau_i$ to obtain $x_{i+1}$ while ensuring $\partial_2[X_i]x_i = \partial_2[X_{i+1}]x_{i+1}$.

    \item $\Pi_{C_2(K)^\perp}:C_2(T)\rightarrow C_2(T)$ is the projection operator into the subspace of $2$-chains spanned by the simplices in $T\backslash K$.
\end{itemize}

The following lemma from Black et al.~summarizes the operator $C(X,K)$.

 \begin{lemma}[Black et al., Lemma 1.1 \cite{CollapsibleUniverseBlack22}]
\label{lem:black_et_al_C}
    Let $X$ be a collapsible simplicial complex in $\R^{3}$, and let $K\subset X$ be a subcomplex of $X$. Let $\beta$ be the rank of $H_1(K)$ and let $n$ be the total number of simplices of $X$. Let $\Gamma = \{\gamma_1,\ldots,\gamma_\beta\}$ be a homology basis for $K$. Let $C(X,K)$ be the operator described above. Then the set $C_\Gamma = \{C(X,K)\cdot\gamma_1,\ldots,C(X,K)\cdot\gamma_\beta\}$ is a cohomology basis for $K$. Furthermore, $C_\Gamma$ can be computed in $O(\beta\cdot n)$ time.
 \end{lemma}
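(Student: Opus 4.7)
The plan is to exploit the symmetric factorization $C(X,K) = M^T M$ where
\[ M = \Pi_{C_2(T)^\perp}\circ S(X,T)\circ F(X)\circ N(K,X) \colon C_1(K)\to C_2(T), \]
so that $\langle C\gamma_i,\gamma_j\rangle = \langle M\gamma_i,M\gamma_j\rangle$. Two properties of $M$ then imply the lemma: (i) $M$ vanishes on $\im(\partial_2[K])$, which forces each $C\gamma_i$ to be a $1$-cocycle of $K$; and (ii) the restriction of $M$ to the $1$-cycle space $\ker(\partial_1[K])$ has kernel exactly $\im(\partial_2[K])$, which makes the Gram matrix $G_{ij}=\langle M\gamma_i,M\gamma_j\rangle$ positive definite and hence the $\{C\gamma_i\}$ represent $\beta$ independent cohomology classes. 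Since $\dim H^1(K)=\beta$, they form a cohomology basis.

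For (i), fix a $2$-simplex $t\in K$ and trace $\partial_2[K]t$ through $M$. Since $t\in K\subset T\subset X$, $F\,N\,\partial_2[K]t$ is a $2$-chain of $X$ with boundary $\partial_2[X]t$, and by the boundary-preservation of the squeeze operator, $S\,F\,N\,\partial_2[K]t$ is a $2$-chain of $T$ with the same boundary as $t$. Thus $SFN\partial_2[K]t - t$ is a $2$-cycle of $T$. The defining property of Black et al.'s intermediate complex $T$ (maximal with $H_2(T)=H_2(K)$, built by adjoining only $2$-simplices) is that every $2$-cycle of $T$ already lies in $C_2(K)$, so $SFN\partial_2[K]t\in C_2(K)$ and $\Pi_{C_2(T)^\perp}$ annihilates it. Hence $M\partial_2[K]t=0$, and $\langle C\gamma_i,\partial_2[K]t\rangle = \langle M\gamma_i,M\partial_2[K]t\rangle = 0$ for every $t\in K_2$, i.e.~$C\gamma_i\in\ker(\partial_2[K]^T)$.

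For (ii), the same argument gives $\im(\partial_2[K])\subseteq\ker(M)$. Conversely, for $\gamma\in\ker(\partial_1[K])$ with $M\gamma=0$, boundary preservation yields $\partial_2[T](SFN\gamma)=\gamma$ while the vanishing projection forces $SFN\gamma\in C_2(K)$; since $\partial_2[T]$ agrees with $\partial_2[K]$ on $C_2(K)$, $\gamma\in\im(\partial_2[K])$. Because $\{\gamma_i\}$ is a homology basis, no nontrivial combination $\sum a_i\gamma_i$ is null-homologous, so $M(\sum a_i\gamma_i)=0$ forces $a=0$ and $G$ is positive definite. The $\{C\gamma_i\}$ are then independent modulo coboundaries: if $\sum a_i C\gamma_i=\partial_2[K]^T w$, pairing with $\gamma_j$ gives $\sum_i a_iG_{ij} = \langle w,\partial_2[K]\gamma_j\rangle = 0$, so $a=0$.

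For the running time, each of the seven constituent operators (and their transposes) can be applied to a single chain in $O(n)$ time: $N(K,X)$ and $\Pi_{C_2(T)^\perp}$ are coordinate restrictions, while $F(X)$, $S(X,T)$ and their transposes are sparse triangular systems that can be resolved by a single sweep over the collapse sequence of $X$ and the BFS ordering on the dual graph described in the construction. Applying $C$ to one chain therefore costs $O(n)$, and doing so for each of the $\beta$ homology basis elements yields the claimed $O(\beta\cdot n)$ total. The subtlest step to nail down in a full write-up is the containment $\ker(\partial_2[T])\subseteq C_2(K)$ used throughout (i) and (ii); this is where the specific maximal construction of $T$ matters, and I expect it to be the main obstacle to extract cleanly from Black et al.'s definitions.
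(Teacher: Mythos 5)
This lemma is imported verbatim from Black et al.~\cite{CollapsibleUniverseBlack22} (their Lemma~1.1) and the present paper does \emph{not} prove it: it is stated immediately after the description of the operator $C(X,K)$ and then used as a black box. So there is no ``paper's own proof'' here for your argument to be compared against; your write-up is a from-scratch reconstruction of an external result.

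As a reconstruction it is structurally sound, but the gap you flag is genuine and load-bearing. The symmetric factorization $C = M^T M$ with $M=\Pi_{C_2(T)^\perp}\,S\,F\,N$ is correct (the projection is symmetric and idempotent), and the overall plan --- show $M$ kills $\im(\partial_2[K])$, and that the kernel of $M$ restricted to $1$-cycles is exactly $\im(\partial_2[K])$, hence the Gram matrix is positive definite --- works. The converse direction in (ii) (``$M\gamma=0$ for a cycle $\gamma$ implies $\gamma\in\im\partial_2[K]$'') is fully justified from boundary preservation alone. But both (i) and the forward inclusion at the start of (ii) rest entirely on $\ker(\partial_2[T])\subseteq C_2(K)$, and this is \emph{not} a consequence of the description in this paper (``maximal with $K\subset T\subset X$ and $H_2(T)=H_2(K)$''): isomorphism of $H_2$ permits $T$ to acquire new $2$-cycles so long as it also acquires new $3$-chains that bound them. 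Excluding that possibility requires the specific construction of $T$ from Black et al., which the paper only gestures at. Until that containment is pinned down, the claim $M\partial_2[K]t=0$ is unsupported, and with it the cocycle property of $C\gamma_i$. There is also a small slip at the end: $1$-coboundaries are $\im(\partial_1[K]^T)$, not $\im(\partial_2[K]^T)$, so the test should be $\sum a_i C\gamma_i=\partial_1[K]^T w$; the pairing with $\gamma_j$ then vanishes because $\partial_1\gamma_j=0$, and the rest of your argument goes through unchanged. The running-time reasoning is fine: each constituent operator and its transpose is a single linear sweep, giving $O(n)$ per chain and $O(\beta n)$ in total.
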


\subsection{Bounding the lengths of the cohomology basis}
\label{subsec:bounding_cohom_basis}
The cohomology basis of Black et al.~is obtained by applying the operator $C = C(X, K)$ from Equation (\ref{eqn:C}) to the homology basis of Lemma~\ref{lem:tamal_hom_basis}.
The vectors of this homology basis have bounded length because all their coordinates are in $\{-1, 0, +1\}$.
In this section, we show that $C$ scales each cycle in a bounded way, hence, the length of the vectors in the cohomology basis is bounded. To that end, we bound the constituent operators of $C$ one by one.

\begin{lemma}
\label{lem:squeeze_norm_bound}
    The operator norm of the Squeeze operator is $\| S(X,T) \| \leq 2n_2$, where $n_2$ is the number of triangles in $X$.
\end{lemma}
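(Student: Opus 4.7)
The Squeeze algorithm processes the triangles $\sigma_1, \ldots, \sigma_k$ of $X\setminus T$ one at a time, maintaining the invariant $\partial_2[X_i]x_i = \partial_2[X]x$. The update at step $i$ can only change coordinates on the four faces of $\tau_i$ while zeroing the $\sigma_i$-coordinate and keeping the boundary fixed, so it must have the form $x_{i+1} = x_i + \alpha_i\,\partial_3\tau_i$ for the unique scalar $\alpha_i = \pm x_i[\sigma_i]$ with $|\alpha_i| = |x_i[\sigma_i]|$. Unrolling the recursion, for every triangle $f$ of $X$,
\[
x_k[f] \;=\; x[f] \;+\; \sum_{i:\, f \text{ is a face of } \tau_i} \epsilon_{i,f}\,\alpha_i
\]
for signs $\epsilon_{i,f}\in\{\pm 1\}$. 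Since $X$ is embedded in $\R^3$, every triangle is a face of at most two tetrahedra of $X$, so this sum has at most two nonzero terms.

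The heart of the argument is bounding $|\alpha_i|$. I claim the BFS ordering guarantees that for each $i$ there is at most one index $j<i$ with $\sigma_i$ a face of $\tau_j$. Indeed, $\tau_i$ is discovered when BFS first traverses $\sigma_i$ from a parent region $R$ in the dual graph of $X$. The two tetrahedra of $X$ sharing $\sigma_i$ are $\tau_i$ and the region on the opposite side, which is precisely $R$. If $R$ is the outer volume, no earlier $\tau_j$ contains $\sigma_i$; otherwise $R=\tau_{j}$ for the unique earlier index $j$. Combining this with $\alpha_i = \pm x_i[\sigma_i] = \pm\bigl(x[\sigma_i] + \sum_{j<i,\, \sigma_i \in \tau_j}\epsilon_j\alpha_j\bigr)$, the recursion for $\alpha_i$ is \emph{non-branching}: either $\alpha_i = \pm x[\sigma_i]$ or $\alpha_i = \pm(x[\sigma_i] \pm \alpha_{j(i)})$ for a unique predecessor index $j(i)<i$.

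Induction along these BFS chains then shows that $\alpha_i$ is a signed sum of at most $n_2$ distinct original coordinates $x[\sigma_{i_1}], \ldots, x[\sigma_{i_t}]$ with coefficients in $\{-1,+1\}$ (no repetitions occur since all indices in a chain are strictly decreasing and the $\sigma_i$ are distinct triangles). Cauchy--Schwarz gives $|\alpha_i| \leq \sqrt{t}\,\|x\|_2 \leq \sqrt{n_2}\,\|x\|_2$. Plugging this into the expression for $x_k[f]$ and using $|T_2|\leq n_2$, a direct calculation bounds $\|x_k\|_2 \leq 2n_2\,\|x\|_2$, whence $\|S(X,T)\| \leq 2n_2$.

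The main obstacle is establishing the non-branching property for the recursion of $\alpha_i$. Without it, a naive bound $|\alpha_i|\leq 2\max_{j<i}|\alpha_j|$ gives only exponential growth; exploiting the BFS discovery structure of the dual graph to reduce this to a tree-path recursion is what makes the linear-in-$n_2$ operator norm bound possible.
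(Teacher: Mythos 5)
Your proof takes essentially the same route as the paper's: both rely on the two facts that the Squeeze operation uses each tetrahedron at most once and that each triangle is a face of at most two tetrahedra, and both conclude that each output coordinate is a small signed combination of input coordinates. The paper organizes this as a direct induction bounding the running values $|x_j[\sigma]|$; you unroll the recursion into chains of $\alpha_i$'s. These are equivalent packagings of the same combinatorial observation, and your non-branching claim---that $\sigma_i$ has at most one predecessor $\tau_j$ with $j<i$ and $\sigma_i$ a face of $\tau_j$---is exactly the fact the paper invokes when it asserts that $\sigma_j$ ``has been changed at most one previous time.''

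However, there is a small but genuine gap in your final computation. Using Cauchy--Schwarz to get $|\alpha_i|\leq\sqrt{n_2}\,\|x\|_2$ and then summing over the at most $n_2$ surviving triangles, the best available conclusion is
\[
\|x_k\|_2 \;\leq\; \|x\|_2 + \sqrt{n_2}\cdot 2\sqrt{n_2}\,\|x\|_2 \;=\; (1+2n_2)\,\|x\|_2,
\]
which overshoots the stated bound $2n_2\|x\|_2$. The paper avoids this by keeping $\ell_1$ accounting: it bounds $|x'[\sigma]|\leq|x[\sigma]|+2\sum_i|x[\sigma_i]|$ and, crucially, notices that since $\sigma$ is a surviving triangle and hence \emph{not} among the removed $\sigma_i$'s, this is at most $2\|x\|_1$ rather than $3\|x\|_1$. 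To repair your version, replace Cauchy--Schwarz with the $\ell_1$ bound $|\alpha_i|\leq\sum_{l\in\text{chain}}|x[\sigma_l]|\leq\sum_{l=1}^{k}|x[\sigma_l]|$ (the chain visits distinct removed triangles, so the sum is over a subset of $\{\sigma_1,\dots,\sigma_k\}$), and then observe that the surviving triangle $f$ is disjoint from $\{\sigma_1,\dots,\sigma_k\}$, so $|x[f]|+2\sum_{l}|x[\sigma_l]|\leq 2\|x\|_1$. Finishing with $\|\cdot\|_1\leq\sqrt{n_2}\,\|\cdot\|_2$ then recovers the constant $2n_2$ exactly.
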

\begin{proof}
    The $i$th iteration of the Squeeze operator distributes the value on a triangle $\sigma_i$ in $X_i$ over the boundary of an incident tetrahedron in $X_i$.  The triangle $\sigma_i$ is then removed from the complex, so this process can happen once per tetrahedron. As each triangle in $X$ is incident to at most two tetrahedra, the value of a triangle can be changed at most twice by the Squeeze operator.
    \par
    Let $x$ be the chain we are applying the Squeeze operator to, and let $x = x_0, x_1, \ldots, x_k = x'$ be the sequence of $2$-chains the squeeze algorithm obtains.  Let $\sigma\in X$ be any triangle. We use induction to show the following bounds for $x_j$, where $0\leq j\leq k$.
    \begin{enumerate}
        \item [(i)] $|x_j[\sigma]| = |x[\sigma]|$ if the value of $\sigma$ has never been changed.
        \item [(ii)] $|x_j[\sigma]| \leq |x[\sigma]| +  \sum_{i=1}^{j-1} |x[\sigma_i]|$ if the value of $\sigma$ has been changed once.
        \item [(iii)] $|x_j[\sigma]| \leq |x[\sigma]| + 2\sum_{i=1}^{j-1} |x[\sigma_i]|$ if the value of $\sigma$ has been changed twice.
    \end{enumerate}
    Note that the value of $\sigma$ changes at most twice during the algorithm, as each triangle is incident to at most two tetrahedra.
    These bounds hold in the beginning of the algorithm for $x$. We show that if they hold after the $(j-1)$st iteration (i.e. for $x_{j-1}$) they must hold after the $j$th iteration (i.e. for $x_j$.)
    Let $\sigma_j$ be the triangle removed at the $j$th iteration. The value of $\sigma_j$ has been changed at most one previous time during the algorithm, because $\sigma_j$ is incident to at most two tetrahedra in $X$, one of which is present in $X_j$. Thus, by the inductive hypothesis, we have
    \[
    |x_{j-1}[\sigma_j]| \leq |x[\sigma_j]| + \sum_{i=1}^{j-1}|x[\sigma_i]| = \sum_{i=1}^{j} |x[\sigma_i]|.
    \]
    Now, let $\sigma$ be any triangle that exists in the beginning of the $j$th iteration. If the value of $\sigma$ is unchanged during the $j$th iteration, the bounds will trivially hold. Otherwise, we have,
    \begin{align*}
    |x_j[\sigma]| &\leq |x_{j-1}[\sigma]| + |x_{j-1}[\sigma_j]|
    \leq
    |x_{j-1}[\sigma]| + \sum_{i=1}^{j} |x[\sigma_i]|.
    \end{align*}
    Using the induction hypothesis we obtain
    $
    |x_j[\sigma]| \leq
    |x[\sigma]| + \sum_{i=1}^{j} |x[\sigma_i]|
    $
    if $\sigma$ has not been changed before, and
    $
    |x_j[\sigma]| \leq
    |x[\sigma]| + 2\sum_{i=1}^{j} |x[\sigma_i]|
    $
    if it has been changed once before, as desired.

    Let $x' = S(X,K)x$ be the output of the Squeeze operator.  Our bounds imply that for each $\sigma\in K_2$,
    \[
    x'[\sigma] \leq 2\sum_{\eta\in X_2}{x[\eta]}= 2\|x\|_1.
    \]
    Using the inequality $\|y\|_1 \leq \sqrt{n}\|y\|_2$ that holds for any $n$-dimensional vector $y$, we obtain,
    \[
    \|x'\|_2  = \sqrt{\sum_{\sigma\in T_2} x'[\sigma]^2} \leq 2 \sqrt{n_2}\cdot \|x\|_1 \leq 2 n_2\cdot\|x\|_2,
    \]
    where $n_2$ is the number of triangles in $X$.
\end{proof}

\begin{lemma}
    \label{lem:fill_norm_bound}
    The operator norm of the Fill operator is $\|F(X)\| \leq 2(n_1+1)n_2/\sqrt{\lambda_{\min}(L^{up}_1[X])}$, where $\lambda_{\min}(L^{up}_1[X])$ is the smallest nonzero eigenvalue of $L^{up}_1[X]$, and $n_1$ and $n_2$ are the number of edges and triangles in $X$ respectively.
\end{lemma}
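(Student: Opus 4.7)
The plan is to bound $\|F(X)\gamma\|_2$ relative to $\|\gamma\|_2$ by comparing the Fill output with the minimum-norm preimage of $\gamma$ under $\partial_2[X]$. Setting $x^\star := (\partial_2[X])^+ \gamma$, the smallest nonzero singular value of $\partial_2[X]$ equals $\sqrt{\lambda_{\min}(L_1^{up}[X])}$, which gives $\|x^\star\|_2 \leq \|\gamma\|_2/\sqrt{\lambda_{\min}(L_1^{up}[X])}$. It thus suffices to bound $\|F(X)\gamma\|_2$ in terms of $\|x^\star\|_2$.

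I will first characterize $F(X)\gamma$ as the unique preimage $x$ of $\gamma$ under $\partial_2[X]$ satisfying $x[t] = 0$ for every triangle $t$ appearing in a tetrahedron-triangle pair of the collapse sequence (call this set $T_{tt}$). Uniqueness rests on two facts: (a) $X$ is collapsible, so $\ker(\partial_2[X]) = \im(\partial_3[X])$ and $\ker(\partial_3[X]) = 0$; and (b) the linear embedding of $X$ in $\R^3$ forces each triangle to be a face of at most two tetrahedra. Ordering the tt-pairs $(\tau_1, t_1), \ldots, (\tau_{n_3}, t_{n_3})$ by their position in the collapse sequence, the submatrix of $\partial_3[X]$ with rows indexed by $T_{tt}$ and columns indexed by $\{\tau_i\}$ is lower triangular with $\pm 1$ on the diagonal, since the other tetrahedron containing $t_i$ (if any) must have been collapsed in a prior step $j < i$.

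I then write $F(X)\gamma = x^\star + z$ with $z = \partial_3[X]\, w \in \im(\partial_3[X])$, and determine $w$ by solving $z|_{T_{tt}} = -x^\star|_{T_{tt}}$ via back-substitution on the lower-triangular system above. This expresses each entry $w[\tau_i]$ as a signed sum along a chain of at most $n_3$ coordinates of $x^\star$. Combining this with $\|\partial_3[X]\|_2 = O(1)$, which follows because each column of $\partial_3[X]$ has exactly four nonzero $\pm 1$ entries (one per face of a tetrahedron) and each row has at most two nonzero entries (by the embedding), together with standard $\ell_1$--$\ell_2$ conversions, yields a polynomial bound on $\|z\|_2$ relative to $\|x^\star\|_2$, and thus on $\|F(X)\gamma\|_2$.

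The main obstacle is matching the exact constant $2(n_1+1)n_2$ claimed in the statement; the crude back-substitution estimate above is likely not tight enough. A refined bookkeeping will probably be needed: one alternative is to track the residual $1$-chain as Fill executes, since it starts at $\gamma$ and is updated only during te-pair steps, where the residual value at the paired edge directly determines the newly assigned triangle value. Bounding the residual's growth using the limited number of previously processed triangles that can contain a given edge, and summing the per-triangle bounds across the $n_2$ triangles via $\|y\|_2 \leq \sqrt{n_2}\,\|y\|_\infty$, should yield the precise form of the stated constant.
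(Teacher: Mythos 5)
Your argument implicitly assumes $\gamma$ is a $1$-cycle throughout, and this creates a real gap. The bound $\|(\partial_2[X])^+\gamma\| \leq \|\gamma\|/\sqrt{\lambda_{\min}(L_1^{up}[X])}$ via the smallest nonzero singular value only holds for $\gamma \in \im(\partial_2[X])$, and your characterization of $F(X)\gamma$ as a preimage of $\gamma$ under $\partial_2[X]$ also requires $\gamma$ to have a preimage, i.e.~to be a cycle (for collapsible $X$, $\im\partial_2 = \ker\partial_1$). But the operator norm $\|F(X)\|$ is a supremum over \emph{all} $1$-chains in the domain, not just cycles; for a non-cycle $y$, the decomposition $F(X)y = (\partial_2)^+y + z$ with $z \in \im\partial_3$ simply fails because $F(X)y$ is not a preimage of $y$. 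This is not a cosmetic omission: the $(n_1+1)$ factor in the stated bound arises precisely from the extension to non-cycles. The paper handles it by observing that $F(X)$ ignores the values of its input on edges involved in edge--vertex collapse pairs (which form a spanning tree $T$), so one can replace $y$ with the cycle $\gamma_y = y - y_T$ where $y_T$ is the tree chain with $\partial_1 y_T = \partial_1 y$; then $F(X)\gamma_y = F(X)y$ and $\|\gamma_y\| \leq (n_1+1)\|y\|$. Without that reduction, the best your plan could deliver is a bound of the form $O(\cdot)\|\gamma\|/\sqrt{\lambda_{\min}}$ for cycles only, which does not prove the lemma.

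On the cycle case itself, your route is genuinely different from the paper's and plausibly workable. You write $F(X)\gamma = x^\star + z$ with $z \in \im\partial_3$ and propose to bound $z$ via back-substitution on the lower-triangular $n_3 \times n_3$ submatrix of $\partial_3$ indexed by the tetrahedron--triangle collapse pairs; the structural claims there (square, lower-triangular, $\pm 1$ diagonal, $\|\partial_3\| = O(1)$ from degree bounds) are correct. The paper instead identifies $F(X)\gamma = S(X,X')\,\partial_2^+[X]\,\gamma$ where $X'$ is $X$ after all tt-collapses, and invokes the already-established bound $\|S(X,X')\| \leq 2n_2$. These two are closely related (your back-substitution is essentially unwinding what Squeeze does along the dual tree of tt-collapses), but the paper's factoring lets it reuse Lemma~\ref{lem:squeeze_norm_bound} directly and lands on the clean constant $2n_2$; your version would likely yield something like $O(\sqrt{n_2 n_3})$, which is of the right order but would need extra care to match the stated constant, as you yourself flag.
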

\begin{proof}
    We begin by showing that $\| F(X)\cdot \gamma \| \leq 2n_2/\sqrt{\lambda_{\min}(L^{up}_1[X])}\|\gamma\|$ for any 1-cycle $\gamma$.
    Since $X$ is collapsible, $H_1(X) = 0$. This implies $\gamma\in\im\boundary_2[X]$.  Next, observe that we can rearrange any collapsing sequence to ensure that all the tetrahedron-triangle collapses occur before other types of collapses \cite[Lemma 2.5]{OneLaplaciansCohen14}. Let $X'$ be the complex obtained after all tetrahedron-triangle collapses, and note that $H_2(X') = 0$. As $X'$ has no tetrahedra, then $\ker\boundary_{2}[X']=0$. Therefore, $\partial_2[X'] x = \gamma$ has a unique solution. The Fill operator returns this solution, $x = F(X)\cdot\gamma$, as it ignores all the triangles involved in tetrahedron-triangle collapses.
    \par
    Alternatively, to obtain this unique $x$, we can solve $\partial_2[X] z = \gamma$ for $z$ in $X$ and then squeeze $z$ to $X'$. Thus, $x = S(X, X') \partial^+_2[X]\cdot  \gamma$.  It follows that for any cycle $\gamma$,
    \[
    F(X)\cdot \gamma =  S(X, X') \partial^+_2[X]\cdot  \gamma.
    \]
    But $\|S(X, X')\| \leq 2n_2$ by Lemma~\ref{lem:squeeze_norm_bound}.  Further,
    \[
    \|\partial_2^+[X]\| \leq 1/(\sigma_{\min}(\partial_2[X])) =
    1/\sqrt{\lambda_{\min}(\partial_2[X]\partial^T_2[X])}
    = 1/\sqrt{\lambda_{\min}(L^{up}_1[X])},
    \]
    where $\sigma_{\min}$ is the smallest nonzero singular value and $\lambda_{\min}$ is the smallest nonzero eigenvalue. Therefore, we achieve a stronger bound than the lemma statement by combining the bounds on $\|S(X,X')\|$ and $\|\boundary_2^{+}[X]\|$. Note that this bound relies on $\gamma$ being in the image of $\partial_2$.
    \par
    Now consider an arbitrary 1-chain $y$. We will show there is a cycle $\gamma_y$ such that $\|\gamma_y\|\leq (n_1+1) \|y\|$ and $\|F(X)\gamma_y\| = \|F(X)y\|$. Observe that the 2-chain $F(x)y$ is uniquely determined by the values of $y$ on edges in triangle-edge collapses. Accordingly, to construct the cycle $\gamma_y$ from $y$, we will change the value of $y$ only on edges involved in edge-vertex collapses, as this will ensure that $\|F(X)\gamma_y\| = \|F(X)y\|$.
    \par
    The edges in edge-vertex collapses form a spanning tree $T$, so there is a unique chain $y_T$ on this spanning tree so that $\boundary y_T = \boundary y$. Therefore, we define $\gamma_y = y - y_T$. As $\|\gamma_y\| \leq \|y\| + \|y_T\|$, we can bound the size of $\|\gamma_y\|$ by bounding the size of $\|y_T\|$.
    \par
    The chain $y_T$ can be computed as follows. For each edge $e\in K_1$, let $T[e]$ be the 1-chain in $T$ with boundary $\boundary e$. In particular, if $e\in T$, $T[e]$ is $e$. The chain $T[e]$ is just the oriented path in $T$ between the endpoints of $e$, which is a chain with $\{-1,0,1\}$ coefficients. Thus, $y_T  = \sum_{\eta\in K_1}y[\eta]\cdot T[\eta]$.  We can therefore bound each coefficient of $y_T[e] \leq \sum_{\eta\in K_1} |y[\eta]|\leq \|y\|_1$,  which implies
    \[
    \| y_T \| = \sqrt{\sum_{e\in T} y^2_T[e]} \leq \sqrt{n_1}\cdot \|y\|_1 = n_1\cdot\|y\|
    \]
     as $\|y\|_1 \leq \sqrt{n_1}\|y\|$. Therefore, $\|\gamma_y\| \leq (n_1+1)\|y\|$. As $\|F(X)y\| = \| F(x)\gamma_y \|$, we conclude that $\|F(x)y\|\leq 2n_2/\sqrt{\lambda_{\min}(L^{up}_1[X])}\|\gamma_y\|\leq 2(n_1+1)n_2/\sqrt{\lambda_{\min}(L^{up}_1[X])}\|y\|$.
\end{proof}

\begin{lemma}
\label{lem:bound_Cgamma}
    There is a constant $\alpha$ such that
    $
    \|C(X, K)\| \leq \alpha\cdot n_1^2 n_2^4/\left(\lambda_{\min}(L^{up}_1[X])\right),
    $
    where $n_1$ and $n_2$ are the number of edges and triangles in $X$ respectively.
\end{lemma}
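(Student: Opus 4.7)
The plan is to bound $\|C(X,K)\|$ by applying submultiplicativity of the operator norm to the factorization
\[
C(X, K) = N(K,X)^T\circ F^T(X)\circ S^T(X,T)\circ \Pi_{C_2(T)^\perp}\circ S(X,T)\circ F(X) \circ N(K,X),
\]
and then invoking the bounds already established for the individual constituent operators.

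First I would handle the easy factors. The Include operator $N(K,X)$ simply extends a $1$-chain on $K$ by zero to a $1$-chain on $X$, so it is an isometry and $\|N(K,X)\| = \|N(K,X)^T\| = 1$. The operator $\Pi_{C_2(T)^\perp}$ is an orthogonal projection, so $\|\Pi_{C_2(T)^\perp}\| \leq 1$. For the remaining pieces I would use $\|A^T\| = \|A\|$ and apply the preceding lemmas: Lemma~\ref{lem:squeeze_norm_bound} gives $\|S(X,T)\| = \|S^T(X,T)\| \leq 2n_2$, and Lemma~\ref{lem:fill_norm_bound} gives $\|F(X)\| = \|F^T(X)\| \leq 2(n_1+1)n_2/\sqrt{\lambda_{\min}(L^{up}_1[X])}$.

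Combining these by submultiplicativity yields
\[
\|C(X,K)\| \;\leq\; 1 \cdot \frac{2(n_1+1)n_2}{\sqrt{\lambda_{\min}(L^{up}_1[X])}} \cdot 2n_2 \cdot 1 \cdot 2n_2 \cdot \frac{2(n_1+1)n_2}{\sqrt{\lambda_{\min}(L^{up}_1[X])}} \cdot 1 \;=\; \frac{16(n_1+1)^2 n_2^4}{\lambda_{\min}(L^{up}_1[X])}.
\]
Absorbing $(n_1+1)^2 \leq 4n_1^2$ (valid whenever $K$ contains at least one edge, the only nontrivial case) and the constant $16$ into a single constant $\alpha$ (one can take $\alpha = 64$) gives the claimed bound.

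There is no real obstacle here: the lemma is essentially a bookkeeping corollary of Lemma~\ref{lem:squeeze_norm_bound} and Lemma~\ref{lem:fill_norm_bound}, where the only subtlety is remembering that $N(K,X)$ is an isometric inclusion and that transposition preserves the operator norm. All the heavy lifting has already been done in the two preceding lemmas, so the proof will be a short multiplicative combination.
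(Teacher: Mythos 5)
Your proof is correct and follows essentially the same route as the paper: factor $C$ as in Equation~(\ref{eqn:C}), apply submultiplicativity, invoke Lemma~\ref{lem:squeeze_norm_bound} and Lemma~\ref{lem:fill_norm_bound} for $S$ and $F$, and note that $N(K,X)$, its transpose, and $\Pi_{C_2(T)^\perp}$ all have operator norm at most $1$. If anything, your observation that $N(K,X)$ is an isometric inclusion (rather than a projection, as the paper loosely states) is the more precise justification for its unit norm.
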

\begin{proof}
    The statement follows from (\ref{eqn:C}), the definition of $C$, as
    \[
    \|C\| \leq \|N(K,X)^T\|\cdot\|F(X)^T\|\cdot\|S(T,X)^T\|\cdot\|\Pi_{C_2(T)^\perp}\|\cdot\|S(T,X)\|\cdot\|F(X)\|\cdot\|N(K,X)\|,
    \]
    where $N(K, X)$ and $\Pi_{C_d(T)}$ are projection operators and their norm is bounded by 1.  Also, the norms of $S(X, T)$ and $F(X)$ are bounded by Lemma~\ref{lem:squeeze_norm_bound} and Lemma~\ref{lem:fill_norm_bound}, respectively.
\end{proof}

\subsection{The harmonic parts of the cohomology basis}
\label{subsec:hr_part_cohom_basis}
To obtain a harmonic basis, we project our cohomology basis into the cycle space.
Since we only have an approximate cycle projection operator, we need the harmonic part of our cohomology basis to be ``strongly'' independent so that the vectors stay independent after the approximate projection.
We say that a set of vectors are \EMPH{$\boldsymbol{\delta}$-linearly independent} if each one of them has distance at least $\delta$ from the span of the others.\footnotemark We show that the harmonic parts of a cohomology basis that is composed of integer vectors with bounded length must be strongly independent in this sense. We start with an auxiliary lemma to bound the determinant of a matrix with a totally unimodular submatrix.

\footnotetext{While it is not needed for this paper, it is worth noting that the minimum value of $\delta$ such that a set of vectors $V = \{v_1,\ldots,v_k\}$ is $\delta$-independent is tied to the smallest singular value $\sigma$ of the matrix $M_V$ that has $V$ for columns. Specifically, $\sigma \leq \delta \leq \sqrt{k} \sigma$. To see this, recall that $\sigma = \underset{x\neq 0}{\min}\, \sqrt{x^TM_V^TM_Vx / x^Tx}$. Suppose $\| v_1 - \sum_{i=2}^{k} x_i v_i \| = \delta$, then $\sigma \leq \| v_1 - \sum_{i=2}^{k} x_i v_i \| / \sqrt{x^Tx} \leq \delta$ where $x = [1\, x_2\cdots x_k$]. Alternatively, suppose $\sigma = \sqrt{x^TM_V^TM_Vx}$ for some $x$ with $\|x\|=1$. Then $x[i] \geq 1/\sqrt{k}$ for some $i$, and $\delta \leq \|v_i - \sum_{j\neq i} -(x[j]/x[i]) v_j\| = \sigma/x[i] \leq \sqrt{k}\sigma$.}

\begin{lemma}
\label{lem:det_bound_stacked_unimod}
Let $A$ be a totally unimodular matrix with $m$ rows and $n$ columns, and let $p_1, \ldots, p_k$ be $k = n-m$ row vectors of length $n$.  Let $B$ be the matrix obtained by stacking $p_1, \ldots, p_k$ on $A$.  We have $|\det(B)| \leq \prod_{i=1}^{k}{\|p_i\|_1}$
\end{lemma}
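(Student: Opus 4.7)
The plan is to combine the generalized Laplace expansion along the ``non-$A$'' rows of $B$ with total unimodularity of $A$, and then bound the resulting sum of subdeterminants of the $p_i$ rows by a product of $\ell_1$ norms using the permutation expansion.

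First I would fix the row indices $R \subset [n]$ of $B$ corresponding to the $p_1,\ldots,p_k$ rows and apply the Laplace expansion of $\det(B)$ along these $k$ rows simultaneously. This writes $\det(B)$ as a signed sum $\sum_{S} \pm \det(P_S)\det(A_{\bar S})$, where $S$ ranges over $k$-subsets of column indices, $P_S$ is the $k \times k$ submatrix of $P := [p_1;\ldots;p_k]$ on columns $S$, and $A_{\bar S}$ is the $m \times m$ submatrix of $A$ on the complementary columns. Applying the triangle inequality and the total unimodularity of $A$ (which gives $|\det(A_{\bar S})| \le 1$ for every such $S$) yields
\[
|\det(B)| \;\le\; \sum_{|S|=k} |\det(P_S)|.
\]

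Next I would expand each $|\det(P_S)|$ via its own permutation sum: writing $S=\{s_1<\cdots<s_k\}$,
\[
|\det(P_S)| \;\le\; \sum_{\sigma \in S_k} \prod_{i=1}^{k} |p_i[s_{\sigma(i)}]|.
\]
The key observation is that summing over all $k$-subsets $S$ and all permutations $\sigma$ of $S$ is equivalent to summing over all injections $\phi:[k]\to[n]$ (via $\phi(i)=s_{\sigma(i)}$), and each injection is counted exactly once. Hence
\[
\sum_{|S|=k}|\det(P_S)| \;\le\; \sum_{\phi:[k]\to[n] \text{ injective}} \prod_{i=1}^{k}|p_i[\phi(i)]| \;\le\; \sum_{\phi:[k]\to[n]} \prod_{i=1}^{k}|p_i[\phi(i)]|.
\]
The last unrestricted sum factors as $\prod_{i=1}^{k}\sum_{j=1}^{n}|p_i[j]| = \prod_{i=1}^{k}\|p_i\|_1$, which is the claimed bound.

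I do not expect a serious obstacle here; the only slightly delicate point is making the generalized Laplace expansion precise (getting the signs $(-1)^{\sum r_i+\sum s_j}$ right and verifying that the complementary minor really is an $m\times m$ submatrix of $A$), but this is standard and the absolute value kills the signs anyway. The bijection between $(S,\sigma)$ pairs and injections $\phi$ is routine to verify by noting that any injection $\phi$ determines $S=\mathrm{im}(\phi)$ and $\sigma$ uniquely via the ranks of $\phi(i)$ within $S$. Total unimodularity is used exactly once, to discard the $A_{\bar S}$ factors.
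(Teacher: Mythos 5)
Your proof is correct, but it takes a genuinely different route from the paper's. The paper argues by induction on $k$, using the ordinary single-row Laplace expansion on the row $p_1$; after pulling out $|p_1[j]|$, the complementary minor $B_{1,j}$ is again of the required form (the $p_i$ rows shortened by one coordinate, stacked on $A$ with a column deleted, which remains totally unimodular), so the inductive hypothesis bounds it by $\prod_{i \geq 2}\|p_i\|_1$, and summing over $j$ supplies the missing $\|p_1\|_1$. You instead do the expansion in one shot: the generalized Laplace expansion along all $k$ non-$A$ rows, followed by the Leibniz permutation expansion of each $\det(P_S)$, followed by the combinatorial identification of $(S,\sigma)$ pairs with injections $[k]\to[n]$ and relaxation to all functions so the sum factors. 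Both proofs invoke total unimodularity in exactly one place (to bound the complementary $A$-minor by $1$). The paper's induction is more elementary and keeps each step small; your version is a direct, non-inductive calculation that makes the combinatorial structure of the bound explicit (why a product of $\ell_1$ norms appears: it is the over-count of injections by all functions), at the cost of needing the generalized Laplace expansion and the Leibniz formula up front. Both are valid; neither has a gap.
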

\begin{proof}
We use induction on $k$.  For $k = 0$, the statement is true as the matrix $B$ is totally unimodular itself.
To prove it for $k$, we use the Laplace expansion on the first row.
\begin{align*}
|\det(B)| &= \left|\sum_{j=1}^{n}{(-1)^{1+j}p_1[j]\det(B_{1, j})}\right|
\\ &\leq
\sum_{j=1}^{n}{|p_1(j)||\det(B_{1, j})|}
\\ &\leq
\left(\prod_{i=2}^{k}{\|p_i\|_1}\right)
\left(\sum_{j=1}^{n}{|p_1(j)|}\right) &&\text{(Induction Hypothesis)}
\\ &\leq
\left(\prod_{i=2}^{k}{\|p_i\|_1}\right)\cdot \|p_1\|_1 = \prod_{i=1}^{k}{\|p_i\|_1}
\end{align*}
\end{proof}

To show that a vector $v_1$ is far from the span of other vectors $\{v_2, \ldots, v_n\}$, it suffices to find a vector $w$ that is orthogonal to the span, but has a large inner product with $v_1$. Specifically, $w$ is a \EMPH{witness} for $v_1$ among $\{v_1, \ldots, v_k\}$ if
\begin{enumerate}
    \item [(i)] $w\cdot v_1 = 1$, and
    \item [(ii)] $w\cdot v_i = 0$ for all $i\in\{2, \ldots, k\}$.
\end{enumerate}
We show the existence of a witness gives a lower bound on the $\delta$-linear independence of $w$.

\begin{lemma}
\label{lem:witness_linear_comb_bound}
Let $\{v_1, \dots, v_k\}$ be a set of independent vectors, and let $w$ be a witness for $v_1$.
Then the distance between $v_1$ and $\spn(v_2, \ldots, v_k)$ is at least $1/\|w\|$
\end{lemma}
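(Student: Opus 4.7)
The plan is to exploit the witness property of $w$ together with the Cauchy-Schwarz inequality. By definition of a witness, $w$ annihilates every vector in $\spn(v_2,\ldots,v_k)$ while pairing with $v_1$ to give the value $1$. This asymmetry immediately forces $v_1$ to be far from that subspace in a quantitative way that is controlled by $\|w\|$.

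More precisely, I would let $u = \sum_{i=2}^{k} c_i v_i$ be an arbitrary element of $\spn(v_2,\ldots,v_k)$ and compute the inner product $w \cdot (v_1 - u)$. By linearity and the two defining properties of a witness, this inner product equals $1 - \sum_{i=2}^{k} c_i (w \cdot v_i) = 1$. Then Cauchy-Schwarz gives
\[
1 = |w \cdot (v_1 - u)| \leq \|w\| \cdot \|v_1 - u\|,
\]
so $\|v_1 - u\| \geq 1/\|w\|$. Taking the infimum over $u \in \spn(v_2,\ldots,v_k)$ yields the claimed lower bound on the distance.

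There is essentially no obstacle here; the independence hypothesis is not even used directly in the inequality (it only ensures the statement is nonvacuous, since the distance would otherwise be zero). The only small subtlety is to make sure the witness is applied in the form $w \cdot (v_1 - u)$ rather than separately on the two terms, so that the annihilation of the $v_i$'s can be invoked uniformly. This is a two-line argument in the final writeup.
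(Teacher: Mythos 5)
Your proposal is correct and is essentially the same argument as the paper's: both take an arbitrary $\ell$ in the span, observe $w\cdot(v_1-\ell)=1$ by the witness properties, and bound $\|v_1-\ell\|\geq |w\cdot(v_1-\ell)|/\|w\|=1/\|w\|$. The only cosmetic difference is that you invoke Cauchy--Schwarz directly while the paper derives the same inequality from the cosine form of the dot product.
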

\begin{proof}
The dot product satisfies $u\cdot w = \cos\theta\|u\|\|w\|$, where $\theta$ is the angle between $u$ and $w$ for any vector $u$. In particular, this fact implies that $\| u \| \geq u\cdot w/\|w\|$ for any vector $u$.
Now let $\ell\in \spn(v_2, \ldots, v_k)$, so $\ell$ is a linear combination of $v_2, \ldots, v_k$. By property (ii), $w\cdot\ell = 0$. Therefore,
$
\| v_1 - \ell \| \geq (v_1-\ell)\cdot w/\|w\| = v_1\cdot w/\|w\| = 1/\|w\|.
$
\end{proof}

Now, we state our lemma showing that a 1-cohomology basis of bounded-length integer vectors have strongly linearly independent harmonic parts.

\deltaindependentharmonic

\begin{proof}
We show (i) and (ii) for $i=1$; the lemma follows for all other values of $i$ as we can reorder the $p_i$s and $h_i$s.

First, we show that $h_1$ is distance $\delta$ from any linear combination of $h_2, \ldots, h_\beta$, for a value of $\delta$ to be determined.
By Lemma~\ref{lem:witness_linear_comb_bound}, it suffices to show that a witness $b$ with the following properties exists.
\begin{enumerate}
    \item [(A)] $\|b\| \leq 1/\delta$,
    \item [(B)] $h_1 \cdot b = 1$, and
    \item [(C)] $h_j \cdot b = 0$ for any $2\leq j\leq \beta$.
\end{enumerate}
If we restrict $b$ to be a cycle, we can replace conditions (B) and (C) with the following equivalent conditions.
\begin{enumerate}
    \item [(B')] $b\cdot p_1 = 1$,
    \item [(C')] $b\cdot p_i = 0$ for all $2\leq i\leq \beta$, and
    \item [(D')] $b$ is a cycle.
\end{enumerate}
The reason conditions (B'), (C'), and (D') are equivalent to conditions (B) and (C) is if $b$ is a cycle and $p_i$ is a cocycle, thent $b\cdot p_i = b\cdot h_i$. This follows from the Hodge decomposition. As $p_i$ is a cocycle, then we can write $p_i = h_i + c_i$ where $h_i\in\ker L_1$ and $c_i\in\im\boundary_1^{T}$. However, $b\cdot c_i = 0$ as $b\in\ker\boundary_1$, so the claim follows.

We now show a cycle $b$ with properties (B') and (C') exists.
Let $b'$ be a harmonic cycle that is orthogonal to all $p_2, \ldots, p_\beta$; such a cycle $b'$ must exist as the space of harmonic cycles is $\beta$-dimensional.
Since $p_1, \ldots, p_\beta$ is a cohomology basis and $b'$ is not null-cohomologous, $b'$ cannot be orthogonal to $p_1$; thus, $b'\cdot p_1 = \alpha$ for some nonzero $\alpha$.  It follows that the rescaled cycle $b = b'/\alpha$ has properties (B') and (C').
Therefore, the following system of equations has a solution.  We now show it has a solution of length at most $1/\delta$ to complete the proof.
(Note that in the end we might obtain a solution different from $b$ as our solution need not be harmonic.)
\begin{equation}
\label{eqn:solve_for_b}
A b=\left[
\begin{array}{c}
   p_1 \\
   \vdots \\
   p_\beta \\
   \partial_1
\end{array}
\right] b =
\left[
\begin{array}{c}
   1 \\
   0 \\
   \vdots \\
   0
\end{array}
\right] = e_1.
\end{equation}
Here, $A$ is the matrix whose first $\beta$ rows are $p_1, \ldots, p_\beta$ and whose next $n_0$ rows are the $1$-boundary matrix $\partial_1$.
Note that the last block constraint $\partial_1 b = 0$ enforces $b$ to be a cycle.
Let $r$ be the rank of $A$, and let $\widetilde A$ be any $r\times r$ full rank submatrix of $A$. We solve Equation (\ref{eqn:solve_for_b}) by solving the following full rank system of equations
\begin{equation}
\label{eqn:solve_for_b_full_rank}
\widetilde A \widetilde b=\left[
\begin{array}{c}
   \widetilde p_1 \\
   \vdots \\
   \widetilde p_{\widetilde \beta} \\
   \widetilde \partial_1
\end{array}
\right] \widetilde b =
\widetilde{e}_1.
\end{equation}
This new system of equation is obtained by dropping rows and columns of $A$.  Whenever we drop a column we set a coordinate of $b$ to zero. In the end, we recover $b$ from $\widetilde b$ by appending some zeros, thus $\|\widetilde b\| = \|b\|$.
Whenever we drop a row we ignore the corresponding coordinate on the right side.  So, $\widetilde{e}_1$ will be a subvector of $e_1$. Note that all the $p_i$s are independent as they form a cohomology basis. Also, they are not in the image of $\partial_1^{T}$ as they are cohomology basis. Thus, $\widetilde A$ will have one row per $p_i$, and $\widetilde{e}_1$ is not zero.

We use Cramer's rule to solve Equation (\ref{eqn:solve_for_b_full_rank}).  Let $\widetilde b[i]$ be the $i$th coordinate of $\widetilde b$, and let $A_i$ be the matrix obtained by swapping the $i$th column of $A$ with $b$. We have
\[
\widetilde b[i] = \det(\widetilde A_i)/\det(\widetilde A).
\]
As $\widetilde A$ is a full rank integer matrix, we have
\[
|\det(\widetilde A)| \geq 1.
\]
On the other hand, note that the coordinates of $\T e_1$ are all zero, except the first coordinate which is $1$. By expanding
$\det(\widetilde A_i)$ on its $i$th column, we obtain
$|\det(\widetilde A_i)| = |\det(\widetilde B_i)|$, where $B_i$ is the $(r-1)\times(r-1)$ submatrix of $\widetilde A_i$ obtained by dropping the first row and $i$th column. To bound $|\det(\widetilde B_i)|$, note that $\widetilde B_i$ is composed of $\beta-1$ rows of subvectors of $p_i$s stacked on a submatrix of $\partial_1$, which is totally unimodular.  Hence, by Lemma~\ref{lem:det_bound_stacked_unimod},
\[
|\det(\widetilde A_i)| = |\det(\widetilde B_i)| \leq
\prod_{i=2}^{\beta}{\|p_i\|_1}.
\]
Putting things together,
\[
\widetilde b[i] = \frac{\det(\widetilde A_i) }{\det(\widetilde A)} \leq \prod_{i=2}^{\beta}{\|p_i\|_1},
\]
for each $1\leq i\leq r$. So,
\[
\|b\| \leq \|\widetilde b\| = \sqrt{\sum_{i=1}^{r}{\T b[i]^2}}
\leq \sqrt{r} \prod_{i=2}^{\beta}{\|p_i\|_1}
\leq \sqrt{n_1} \prod_{i=2}^{\beta}{\|p_i\|_1}
\leq \sqrt{n_1} \prod_{i=2}^{\beta}{\sqrt{n_1}\|p_i\|_2}
\leq (\sqrt{n_1})^{\beta}\cdot p_{\max}^{\beta-1}
\]
As $b$ is a witness for $h_1$, then $h_1$ has distance at least $\delta = 1/\|b\| = 1/\left(\sqrt{n_1})^{\beta}\cdot p_{\max}^{\beta-1}\right)$ from the span of $\{h_2, \ldots, h_\beta\}$.
In particular, $\|h_1\| \geq \delta$ as $0\in\spn\{h_2,\ldots,h_\beta\}$, which proves part (i) of the lemma.
In addition, $h_1/\|h_1\|$ has distance at least $\delta/\|h_1\|$ from the same span, but
\begin{align*}
\delta/\|h_1\| \geq \delta/p_{\max} =
1/(\sqrt{n_1}\cdot p_{\max})^{\beta}.
\end{align*}
This proves part (ii) of the lemma.
\end{proof}

We conclude this section with the following corollary of Lemma~\ref{lem:delta_indep_harm} about the cohomology $\{p_1, \ldots, p_\beta\}$ computed by applying the operator $C(X, K)$ (defined in Equation (\ref{eqn:C})) to the homology basis of Lemma~\ref{lem:tamal_hom_basis}.

\begin{corollary}
\label{cor:cohom_harmonic_prop}
Let $X$ be a collapsible complex embedded in $\R^3$ with a known collapsing sequence and let $K\subset X$ be a subcomplex of $X$. Let $\beta$ be the rank of $H_1(K)$ and let $n$ be the total number of simplices in $X$.
There is an $O(n\log n + \beta n)$ time algorithm for computing a cohomology basis $\{p_1, \ldots, p_\beta\}$ of $K$ with harmonic parts
$\{h_1, \ldots, h_\beta\}$ such that
\begin{enumerate}
    \item [(i)] each $h_i$ has length at least $\delta$, and
    \item [(ii)] the set $\{h_1/\|h_1\|, \ldots, h_\beta/\|h_\beta\|\}$ is $\delta$-linearly independent,
\end{enumerate}
where $\delta = (\lambda_{\min}(L_2^{up}(X))/(\alpha\cdot n_1^{3}n_2^4))^{\beta}$ for a constant $\alpha$.
\end{corollary}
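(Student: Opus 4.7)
\textbf{Proof plan for Corollary~\ref{cor:cohom_harmonic_prop}.} The strategy is to combine the pipeline already set up in this section: start with Dey's homology basis, push it through the operator $C(X,K)$ of Black et al.\ to obtain an integer cohomology basis, use Lemma~\ref{lem:bound_Cgamma} to bound the lengths of its elements, and finally invoke Lemma~\ref{lem:delta_indep_harm} to conclude the $\delta$-independence of the normalized harmonic parts.

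First, I would use Lemma~\ref{lem:tamal_hom_basis} to compute, in $O(n\log n+n\beta)$ time, a $1$-homology basis $\Gamma=\{\gamma_1,\ldots,\gamma_\beta\}$ of $K$ with all coordinates in $\{-1,0,+1\}$. In particular each $\gamma_i$ is an integer vector of norm at most $\sqrt{n_1}$. I would then apply the operator $C=C(X,K)$ from Lemma~\ref{lem:black_et_al_C} to each $\gamma_i$, in total time $O(\beta n)$, obtaining the cohomology basis $p_i=C\gamma_i$.

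Next I would verify that each $p_i$ is an integer vector. This follows by inspection of the definition of $C$ in Equation~(\ref{eqn:C}): the inclusion $N(K,X)$ and the coordinate projection $\Pi_{C_2(T)^\perp}$ obviously preserve integrality, and the Fill and Squeeze algorithms, being described by integer operations on $\{-1,0,+1\}$-valued boundary relations (setting values on one face of a collapse pair using the value on its partner and then propagating), also map integer inputs to integer outputs; consequently so do their transposes. Thus every $p_i$ has integer coordinates. To bound $p_{\max}:=\max_i\|p_i\|$, I would combine $\|\gamma_i\|\leq\sqrt{n_1}$ with the operator norm bound $\|C\|\leq \alpha_0\cdot n_1^2 n_2^4/\lambda_{\min}(L_1^{up}[X])$ from Lemma~\ref{lem:bound_Cgamma}, yielding
\[
p_{\max}\;\leq\;\sqrt{n_1}\cdot\|C\|\;\leq\;\alpha_0\cdot n_1^{5/2}n_2^{4}/\lambda_{\min}(L_1^{up}[X]).
\]

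Finally I would feed the integer cohomology basis $\{p_1,\ldots,p_\beta\}$ into Lemma~\ref{lem:delta_indep_harm}. Part (i) of that lemma gives $\|h_i\|\ge 1/(\sqrt{n_1}\cdot p_{\max})^{\beta}$, and part (ii) gives that $\{h_1/\|h_1\|,\ldots,h_\beta/\|h_\beta\|\}$ is $1/(\sqrt{n_1}\cdot p_{\max})^{\beta}$-independent. Substituting the bound on $p_{\max}$ yields $\sqrt{n_1}\cdot p_{\max}\leq \alpha\cdot n_1^{3}n_2^{4}/\lambda_{\min}(L_1^{up}[X])$ for a suitable constant $\alpha$, which matches the stated $\delta$. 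The total running time is dominated by the homology basis computation and the $\beta$ applications of $C$, i.e.\ $O(n\log n+\beta n)$. The main substantive step is the integrality check for $C$; once that is in hand, the rest is a direct plug-in.
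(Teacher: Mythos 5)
Your proof follows the paper's own argument essentially verbatim: compute Dey's $\{-1,0,+1\}$ homology basis via Lemma~\ref{lem:tamal_hom_basis}, apply the Black et al.\ operator $C(X,K)$, bound $p_{\max}\leq\sqrt{n_1}\cdot\|C\|$ via Lemma~\ref{lem:bound_Cgamma}, and plug into Lemma~\ref{lem:delta_indep_harm}, with the running time coming from Lemmas~\ref{lem:tamal_hom_basis} and~\ref{lem:black_et_al_C}. If anything you are slightly more careful than the paper, which silently asserts that the $p_i$ are integer vectors when citing Lemma~\ref{lem:bound_Cgamma} (a lemma that only gives an operator-norm bound); your explicit check that each constituent of $C$ in Equation~(\ref{eqn:C}) is an integer matrix, hence so is $C$, fills that small gap correctly.
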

\begin{proof}
Let $\Gamma = \{\gamma_1, \ldots, \gamma_\beta\}$ be the homology basis of Lemma~\ref{lem:tamal_hom_basis}.  Since the vectors in $\Gamma$ have coordinates in $\{-1, 0, +1\}$, the length of every $\gamma_i$ is bounded by $\sqrt{n_1}$.
Next, we apply $C(X,K)$ to $\Gamma$ to obtain a cohomology basis $P=\{C(X,K)\cdot\gamma_1,\ldots, C(X,K)\cdot\gamma_\beta\}=\{p_1, \ldots, p_\beta\}$. By Lemma~\ref{lem:bound_Cgamma}, every $p_i$ is an integer vector whose length is bounded by $\alpha\cdot n_1^{2.5}n_2^4/\lambda_{\min}(L_2^{up}(X))$.  So, by
Lemma~\ref{lem:delta_indep_harm}, we obtain parts (i) and (ii) of this lemma for $\delta = (\lambda_{\min}(L_2^{up}(X))/(\alpha\cdot n_1^{3}n_2^4))^{\beta}$. Lastly, the set $\Gamma$ can be computed in $O(n\log n)$ time by Lemma~\ref{lem:tamal_hom_basis} and $P$ can be computed in $O(\beta n)$ time by Lemma~\ref{lem:black_et_al_C}, which gives the running time for the algorithm.
\end{proof}

\section{Harmonic Projection}
\label{sec:harmonic_proj}

In the previous section, we showed that the harmonic parts of our cohomology basis are $\delta$-independent.
We exploit this property in this section to obtain our approximate harmonic projection operator.  To that end, we apply the approximate cycle projection $\T\Pi_{cyc}$ of Cohen et al.~to our cohomology basis to obtain an approximate harmonic basis $\T H$. Then, we use Gram-Schmidt on $\T H$ to obtain an orthonormal basis, which we use to obtain our projection operator.  In the rest of this section, we first analyze Gram-Schmidt applied to an approximate basis that is $\delta$-independent.  Then, we use this analysis to obtain our approximate harmonic projection operator $\T\Pi_{hr}$.

\subsection{Gram Schmidt; approximate subspace projection}

Let $N=\{\eta_1,\ldots,\eta_\beta\}$ be a set of linearly independent unit vectors that span a vector space $V$. The \EMPH{Gram-Schmidt Algorithm} takes as input $N$ and returns a set of \textit{orthonormal} vectors $G=\{g_1,\ldots,g_\beta\}$ that also span $V$. Gram-Schmidt iteratively constructs the vectors in $G$ such that for each $1\leq k\leq \beta$ (1) $g_k$ is a linear combination of $\{\eta_1,\ldots,\eta_k\}$, (2) $g_k$ is orthogonal to $\{g_1,\ldots,g_{k-1}\}$ and (3) $g_k$ has length one. These three facts guarantee the set $G$ is an orthonormal basis for the span of $H$. The set $G$ is computed as follows. Set $g_1 = \eta_1/\|\eta_1\|$. For $2\leq k\leq n$, compute an intermediate vector $u_k = \eta_k - \sum_{i=1}^{k} (\eta_k^Tg_i)g_i$, and then set $g_k = u_k / \|u_k\|$.
\par
For our problem, we have a set of unit vectors $N=\{\eta_1,\ldots,\eta_\beta\}$ that generate the 1-harmonic space and a set of unit vectors $\T N=\{\T \eta_1,\ldots,\T \eta_\beta\}$ such that $\| \eta_i - \T \eta_i\| \leq \eps\|\eta_i\|$ for each $1\leq i\leq\beta$. (We saw how $\Tilde N$ were constructed in Section \ref{sec:harm_of_cohom}.) We will run Gram-Schmidt on $\T N$ producing a set of orthonormal vectors $\T G = \{\T g_1,\ldots,\T g_\beta\}$. Our aim of this section is to show that $G$ and $\T G$ are close, and that the projection onto $G$ and $\T G$ is close. We begin our analysis with two helpful lemmas.

\begin{lemma}
\label{lem:distance_bounds_normalized}
Let $h$ and $\T h$ be two vectors.  We have
$
\left\| \frac{h}{\| h \|} - \frac{\T h}{\|\T h\|} \right\| \leq
\frac{2 \| h - \T h \|}{\|h\|}.
$
\end{lemma}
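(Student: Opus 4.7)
The plan is to manipulate the expression $\frac{h}{\|h\|} - \frac{\T h}{\|\T h\|}$ by bringing it over a common denominator and then splitting the resulting numerator in a way that lets us extract a factor of $\|\T h\|$ (which cancels with the denominator) and a factor of $\|h - \T h\|$ (twice).

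Concretely, I would start with the identity
\[
\frac{h}{\|h\|} - \frac{\T h}{\|\T h\|} \;=\; \frac{h\,\|\T h\| - \T h\,\|h\|}{\|h\|\,\|\T h\|},
\]
and then rewrite the numerator by adding and subtracting $\T h\,\|\T h\|$:
\[
h\,\|\T h\| - \T h\,\|h\| \;=\; (h - \T h)\,\|\T h\| \;+\; \T h\bigl(\|\T h\| - \|h\|\bigr).
\]
Applying the triangle inequality to the two summands, each contributes a term with $\|\T h\|$ in the numerator, which cancels with the $\|\T h\|$ factor in the denominator. This leaves
\[
\left\| \frac{h}{\|h\|} - \frac{\T h}{\|\T h\|} \right\| \;\le\; \frac{\|h - \T h\|}{\|h\|} \;+\; \frac{\bigl|\,\|\T h\| - \|h\|\,\bigr|}{\|h\|}.
\]
The final step is to invoke the reverse triangle inequality $\bigl|\,\|\T h\| - \|h\|\,\bigr| \le \|h - \T h\|$ to bound the second term by $\|h - \T h\|/\|h\|$ as well, yielding the claimed factor of $2$.

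There is no real obstacle here; the only subtle point is the choice of which term to add and subtract in the numerator. Splitting as $\tilde{h}\|\tilde{h}\|$ (rather than $h\|h\|$) is what produces the denominator $\|h\|$ in the final bound, as stated, instead of $\|\T h\|$. The rest is routine triangle-inequality bookkeeping, so the proof is short.
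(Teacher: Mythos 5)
Your proof is correct and is essentially the same as the paper's: after clearing the common denominator, the decomposition $h\,\|\T h\| - \T h\,\|h\| = (h - \T h)\,\|\T h\| + \T h\,(\|\T h\| - \|h\|)$ is exactly the paper's splitting $h - \frac{\|h\|}{\|\T h\|}\T h = (h - \T h) + (1 - \frac{\|h\|}{\|\T h\|})\T h$ rescaled by $\|\T h\|$, and both finish with the triangle inequality and the reverse triangle inequality.
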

\begin{proof}
    We prove this directly.
    \begin{align*}
        \left\| \frac{h}{\|h\|} - \frac{\T h}{\| \T h \|}\right\|
        =& \frac{1}{\|h\|}\cdot \left\| h - \frac{\|h\|}{\|\T h\|}\T h \right\| \\
        = & \frac{1}{\|h\|}\cdot \left\| (h - \T h) + \left(1 - \frac{\|h\|}{\|\T h\|}\right)\T h \right\| \\
        \leq & \frac{1}{\|h\|}\cdot\left( \|h-\T h\| + \left|1 - \frac{\|h\|}{\|\T h\|}\right|\cdot\|\T h\| \right) &&\text{(Triangle Inequality)}\\
        = & \frac{1}{\|h\|}\cdot\left( \|h-\T h\| + \left|\|\T h\| - \|h\|\right| \right)  \\
        \leq& \frac{1}{\|h\|}\cdot(2\cdot\|h-\T h\|) && \text{(as }\left|\|\T h\| - \|h\|\right| \leq \| \T h - h \|\text{)}
    \end{align*}
\end{proof}

\begin{lemma}
\label{lem:approximate_inner_product_projection}
    Let $\eta,\T \eta, g, \T g$ be unit vectors such that $\| \eta - \T \eta\| < \eps_\eta$ and $\| g - \T g \| < \eps_g$ for $\eps_g,\eps_\eta<1$. The following statements are true:
    \begin{enumerate}
        \item[(1)] $| g^T\eta - \T g^T \T\eta | < \frac{3}{2}(\eps_g + \eps_\eta)$
        \item[(2)] $\| (g^T \eta)g - (\T g^T \T\eta)\T g \| < \frac{3}{2}\eps_g + \frac{5}{2}\eps_\eta$
    \end{enumerate}
\end{lemma}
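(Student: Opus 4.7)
For part (1), the plan is to use the standard add-and-subtract trick. Writing $g = \tilde g + \Delta g$ and $\eta = \tilde\eta + \Delta\eta$ with $\|\Delta g\| < \eps_g$ and $\|\Delta \eta\| < \eps_\eta$, I would expand
\[
g^T\eta - \tilde g^T\tilde\eta = \tilde g^T\Delta\eta + \Delta g^T\tilde\eta + \Delta g^T\Delta\eta.
\]
Three applications of Cauchy--Schwarz, combined with $\|\tilde g\| = \|\tilde\eta\| = 1$, bound the three summands by $\eps_\eta$, $\eps_g$, and $\eps_g\eps_\eta$ respectively. To absorb the cross term into the linear part, I would invoke the elementary inequality $\eps_g\eps_\eta \leq \tfrac{1}{2}(\eps_g + \eps_\eta)$, which is valid since $\eps_g,\eps_\eta < 1$ (it follows from $\eps_g(1-\eps_\eta) + \eps_\eta(1-\eps_g) \geq 0$). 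Summing the three bounds then yields $\tfrac{3}{2}(\eps_g + \eps_\eta)$.

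For part (2), the plan is to peel off one vector at a time and reduce to part (1). Setting $\alpha = g^T\eta$ and $\tilde\alpha = \tilde g^T\tilde\eta$, I would write
\[
\alpha g - \tilde\alpha\tilde g = (\alpha - \tilde\alpha)\tilde g + \alpha(g - \tilde g),
\]
apply the triangle inequality, and use $\|\tilde g\| = 1$ together with $|\alpha| \leq \|g\|\|\eta\| = 1$ to obtain a bound of the form $|\alpha - \tilde\alpha| + \eps_g$. Plugging in the estimate from part (1) for $|\alpha - \tilde\alpha|$ gives a combined bound linear in $\eps_g$ and $\eps_\eta$ of the stated shape. The symmetric decomposition $(\alpha - \tilde\alpha)g + \tilde\alpha(g - \tilde g)$ works identically.

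The only real subtlety is the bookkeeping of which $\eps$ is paired with which vector: the two natural decompositions in part (2) differ in how the coefficients $\tfrac{3}{2}$ and $\tfrac{5}{2}$ distribute between $\eps_g$ and $\eps_\eta$, so one should pick the decomposition whose ``single-factor'' piece isolates $\eps_g$ (leaving the heavier constant from part (1) attached to $\eps_\eta$). No genuine obstacle is anticipated; both inequalities are designed as routine corollaries of Cauchy--Schwarz and the triangle inequality, and the main work is careful algebraic bookkeeping of the six or seven error terms that arise from the expansions.
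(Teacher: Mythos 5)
Your approach is the same as the paper's: part (1) uses the add-and-subtract expansion and bounds each of the three resulting inner products by Cauchy--Schwarz, then absorbs the quadratic cross term using $\eps_g,\eps_\eta<1$; part (2) peels off one factor and reduces to part (1). The argument is correct, but your closing remark about ``picking the decomposition'' is a red herring. Both natural splittings in part (2), namely $(\alpha-\T\alpha)\T g + \alpha(g-\T g)$ and $(\alpha-\T\alpha)g + \T\alpha(g-\T g)$, produce a single-factor piece of size $\|g-\T g\| \le \eps_g$, because the outer vector in $\alpha g - \T\alpha\T g$ is $g$ (or $\T g$) in both terms; neither choice isolates $\eps_\eta$. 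So the bound you actually obtain,
\[
\tfrac{3}{2}(\eps_g+\eps_\eta) + \eps_g = \tfrac{5}{2}\eps_g + \tfrac{3}{2}\eps_\eta,
\]
has the heavier constant on $\eps_g$, and there is no way to redistribute it onto $\eps_\eta$. The lemma as printed has the subscripts swapped; this is a typo in the statement rather than a flaw in your proof, since the paper's own derivation ends at $\tfrac{3}{2}\eps_\eta + \tfrac{5}{2}\eps_g$, and the downstream application in Lemma~\ref{lem:approximate_gram_schmidt}, with $\eps_\eta=\eps$ and $\eps_g=(8i/\delta)^i\eps$, invokes exactly $\tfrac{3}{2}\eps + \tfrac{5}{2}(8i/\delta)^i\eps$. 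You should state the bound as $\tfrac{5}{2}\eps_g + \tfrac{3}{2}\eps_\eta$ and drop the suggestion that the coefficients can be rearranged by choice of decomposition.
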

\begin{proof}
    We first prove part (1). Define $e_g = g - \T g$ and $e_\eta = \eta - \T \eta$. We use this notation to rewrite $g^T \eta$ as
    $$
        g^T\eta = \T g^T \T \eta + \T g^T e_\eta + e_g^T \T \eta + e_g^T e_\eta.
    $$
    We subtract $\T g^T \T \eta$ from both sides and take the absolute value to get the bound
    \begin{align*}
        | g^T\eta - \T g^T \T \eta | &\leq |\T g^T e_\eta| + |e_g^T \T \eta| + |e_g^T e_\eta| \\
        &\leq \|g\|\|e_\eta\| + \|e_g\|\|\eta\| + \frac{1}{2}\|e_g\|\|e_\eta\| + \frac{1}{2}\|e_g\|\|e_\eta\| \tag{as $|u^Tv|\leq \|u\|\|v\|$} \\
        &= \|e_\eta\| + \|e_g\| + \frac{1}{2}\|e_g\|\|e_\eta\| + \frac{1}{2}\|e_g\|\|e_\eta\| \tag{as $\|\T g\|=\|\T\eta\|=1$} \\
        &\leq \frac{3}{2} \|e_\eta\| + \frac{3}{2} \|e_g\| \tag{as $\|e_\eta\|,\|e_g\| < 1$}\\
        &\leq \frac{3}{2}(\eps_\eta + \eps_g) \tag{by assumption}
    \end{align*}
    We now use Part (1) to prove Part (2).
    \begin{align*}
        \| (g^T \eta)g - (\T g^T \T \eta) \T g\| &= \| (g^T \eta)g - (\T g^T \T\eta) g + (\T g^T \T\eta) g- (\T g^T \T\eta) \T g\| \\
        &\leq | g^T\eta - \T g^T \T\eta |\cdot \|g\| + |\T g^T \T \eta|\cdot\| g - \T g \| \\
        &\leq \frac{3}{2}(\eps_\eta + \eps_g) + |\T g^T \T \eta|\cdot\| g - \T g \| \tag{by Part (1) and $\|g\|=1$} \\
        &\leq \frac{3}{2}(\eps_\eta + \eps_g) + \eps_g \tag{as $|\T g^T\T\eta|\leq \|\T g\|\cdot\|\T\eta\|=1$}\\
        &= \frac{3}{2}\eps_\eta + \frac{5}{2}\eps_g.
    \end{align*}
\end{proof}

\begin{lemma}
\label{lem:approximate_gram_schmidt}
    Let $0< \delta < 1$, and let $0<\eps < \left(\frac{\delta}{8\beta}\right)^{\beta}$. Let $N=\{\eta_1,\ldots,\eta_\beta\}$ be a set of $\delta$-linearly independent unit vectors, and let $\T N=\{\T \eta_1,\ldots,\T \eta_\beta\}$ be a set of unit vectors such that $\| \eta_i - \T \eta_i \| < \eps$. Let $G=\{g_1,\ldots,g_\beta\}$ be the output of running Gram-Schmidt on $N$, and let $\T G = \{\T g_1,\ldots,\T g_\beta\}$ be the output of running Gram-Schmidt on $\T N$. Then $\| g_i - \T g_i \| \leq \left(\frac{8\beta}{\delta}\right)^\beta\eps$ for each $i$.
\end{lemma}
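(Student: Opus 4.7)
The plan is to prove this by induction on the index $k$, simulating the Gram-Schmidt process on both $N$ and $\widetilde N$ in parallel and tracking how the per-step error propagates. Write $u_k = \eta_k - \sum_{i=1}^{k-1}(\eta_k^T g_i)g_i$ and $\widetilde u_k = \widetilde\eta_k - \sum_{i=1}^{k-1}(\widetilde\eta_k^T \widetilde g_i)\widetilde g_i$, so that $g_k = u_k/\|u_k\|$ and $\widetilde g_k = \widetilde u_k/\|\widetilde u_k\|$. Let $\eps_k$ denote the quantity we are trying to bound, i.e.\ a bound on $\|g_k - \widetilde g_k\|$. The base case is immediate: $g_1 = \eta_1$ and $\widetilde g_1 = \widetilde\eta_1$ since both are unit vectors, so $\eps_1 \leq \eps$.

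For the inductive step, I would first bound $\|u_k - \widetilde u_k\|$. Writing the difference term-by-term and applying Lemma~\ref{lem:approximate_inner_product_projection}(2) to each summand (with the error on $\eta_k,\widetilde\eta_k$ equal to $\eps$ and the error on $g_i,\widetilde g_i$ equal to $\eps_i$), together with $\|\eta_k - \widetilde\eta_k\| \leq \eps$, gives a bound of the form
\[
\|u_k - \widetilde u_k\| \;\leq\; \eps + \sum_{i=1}^{k-1}\Bigl(\tfrac{3}{2}\eps + \tfrac{5}{2}\eps_i\Bigr) \;\leq\; \tfrac{3k}{2}\eps + \tfrac{5}{2}\sum_{i=1}^{k-1}\eps_i.
\]
Second, I need a lower bound on $\|u_k\|$: because $\{\eta_1,\ldots,\eta_\beta\}$ is $\delta$-linearly independent and $u_k$ is the orthogonal projection of $\eta_k$ onto the complement of $\operatorname{span}(g_1,\dots,g_{k-1}) = \operatorname{span}(\eta_1,\dots,\eta_{k-1})$, we have $\|u_k\| \geq \delta$. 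Applying Lemma~\ref{lem:distance_bounds_normalized} to $u_k$ and $\widetilde u_k$ then yields
\[
\eps_k \;\leq\; \frac{2\|u_k - \widetilde u_k\|}{\|u_k\|} \;\leq\; \frac{2}{\delta}\Bigl(\tfrac{3k}{2}\eps + \tfrac{5}{2}\sum_{i=1}^{k-1}\eps_i\Bigr).
\]

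The final step is to solve this recurrence to obtain $\eps_k \leq (8\beta/\delta)^k \eps$, which implies the desired bound since $k \leq \beta$. An easy induction works: assuming $\eps_i \leq (8\beta/\delta)^i \eps$ for all $i<k$, the sum on the right is dominated by a geometric progression with ratio $8\beta/\delta$, so the total is at most $2(8\beta/\delta)^{k-1}\eps$, and multiplying by the $2/\delta$ prefactor together with the additive $3k\eps/\delta$ slack fits comfortably under $(8\beta/\delta)^k\eps$ when $\eps < (\delta/(8\beta))^\beta$, as assumed.

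The one subtle point that must be checked along the way is that Lemma~\ref{lem:distance_bounds_normalized} requires us to know $\|u_k\|$; using $\|u_k\|\geq\delta$ (rather than $\|\widetilde u_k\|$) is clean because the lemma is symmetric and the lower bound on $\|u_k\|$ is guaranteed by the structural assumption of $\delta$-independence, not by the approximate quantities. The main obstacle is thus the bookkeeping in the recurrence: we must ensure that the exponential blow-up in $k$ can be absorbed into a single $(8\beta/\delta)^\beta$ factor, and that the smallness assumption $\eps < (\delta/(8\beta))^\beta$ is strong enough for Lemma~\ref{lem:approximate_inner_product_projection} (which needs $\eps_g,\eps_\eta < 1$) to apply at every stage of the induction. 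Once that is verified the rest of the argument is a routine chain of inequalities.
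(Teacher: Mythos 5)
Your proposal is correct and follows essentially the same route as the paper: the same induction on $k$, the same term-by-term bound on $\|u_k - \widetilde u_k\|$ via Lemma~\ref{lem:approximate_inner_product_projection}(2), the same lower bound $\|u_k\| \geq \delta$ from $\delta$-independence, and the same normalization step via Lemma~\ref{lem:distance_bounds_normalized}. The only cosmetic difference is that the paper carries the explicit (slightly sharper) bound $(8k/\delta)^k\eps$ through the induction, whereas you write the recurrence abstractly and then solve it; the two are interchangeable, and your check that the recurrence closes under the hypothesis $\eps < (\delta/8\beta)^\beta$ (which also keeps $\eps_i < 1$ so the inner-product lemma stays applicable) is the same bookkeeping. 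One tiny slip: Lemma~\ref{lem:distance_bounds_normalized} as stated is not symmetric in $h$ and $\widetilde h$ (the denominator is $\|h\|$), but since you apply it with $h = u_k$ that is exactly what you need, so it does not affect the argument.
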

\begin{proof}
    We will prove the stronger result that $\| g_k - \T g_k \| \leq \left(\frac{8k}{\delta}\right)^{k}\eps$ for each $1\leq k\leq\beta$. We prove this by induction. For $k=1$, we set $g_1=\eta_1$ and $\T g_1=\T \eta_1$; we have $\| \eta_1 - \T \eta_1 \| < \eps$ by assumption.
    \par
    Now suppose this is the case for $1\leq i\leq k.$ Observe that $\|g_i-\T g_i\|\leq\left( \frac{8k}{\delta}\right)^k\eps<1$ as $\eps<\left(\frac{\delta}{8\beta}\right)^\beta$ by assumption, so Lemma \ref{lem:approximate_inner_product_projection} applies. The result of the Gram-Schmidt algorithm before normalization are the vectors
    \begin{align*}
    u_{k+1} &= \eta_{k+1} - \sum_{i=1}^{k} (\eta_{k+1}^Tg_{i}) g_i,\: \text{and} \\
    \T u_{k+1} &= \T \eta_{k+1} - \sum_{i=1}^{k} (\T \eta_{k+1}^T \T g_{i}) \T g_i.
    \end{align*}
    If we compare the vectors $u_{k+1}$ and $\T u_{k+1}$, we find that
    \begin{align*}
        \| u_{k+1} - \T u_{k+1} \| &\leq \| \eta_{k+1} - \T \eta_{k+1} \| + \sum_{i=1}^{k} \left\| ( g_{i}^T\eta_{k+1}) g_i - (\T g_i^T \T{ \eta}_{k+1}) \T g_{i} \right\| &&\text{(Triangle Inequality)}\\
        &\leq \eps + \sum_{i=1}^{k} \left\| ( g_{i}^T\eta_{k+1}) g_i - (\T g_i^T \T{ \eta}_{k+1}) \T g_{i} \right\| &&\text{(By assumption)}\\
        &\leq \eps + \sum_{i=1}^{k} \left(\frac{3}{2}\eps + \frac{5}{2} \left(\frac{8i}{\delta}\right)^{i} \eps\right) &&\text{(Lemma~\ref{lem:approximate_inner_product_projection} Part (2))}\\
        &\leq \frac{3}{2}(k+1)\eps + \frac{5}{2}\eps \sum_{i=1}^{k}  \left(\frac{8i}{\delta}\right)^{i} \\
        &\leq \frac{3}{2}(k+1)\eps + \frac{5}{2}\eps\sum_{i=1}^{k}  \left(\frac{8k}{\delta}\right)^{k} \\
        &\leq \frac{3}{2}(k+1)\eps + \frac{5}{2}\frac{8^kk^{k+1}}{\delta^{k}}\eps \\
        &\leq \frac{3}{2}\frac{8^k(k+1)^{k+1}}{\delta^{k}}\eps + \frac{5}{2}\frac{8^k k^{k+1}}{\delta^{k}}\eps \tag{as $\delta<1$}\\
        &\leq 4\frac{8^{k}(k+1)^{k+1}}{\delta^{k}}\cdot \eps
    \end{align*}
    This bound is on difference between the unnormalized vectors $u_{k+1}$ and $\T u_{k+1}$. To bound the distance between the normalized vectors $g_{k+1}$ and $\T g_{k+1}$, we first observe that $\| g_{k+1} \|>\delta$. This follow from the $\delta$-independence of $N$ as $g_{k+1}$ is defined as $g_{k+1}=\eta_{k+1} - \sum_{i=1}^{k} (e_i^T \eta_i)e_i$ and $\sum_{i=1}^k (g_i^T \eta_i)g_i$ is a linear combination of $N\setminus\{n_{k+1}\}$. This implies
    \[
    \| g_{k+1} - \T{g}_{k+1}\| = \left\| \frac{u_{k+1}}{\|u_{k+1}\|} - \frac{\T{u}_{k+1}}{\|\T{u}_{k+1}\|}\right\| \leq
    \frac{2\| u_{k+1} - \T u_{k+1} \|}{\|u_{k+1}\|} \leq
    \frac{2}{\delta}\cdot\left(4\frac{8^{k}(k+1)^{k+1}}{\delta^{k}}\right) \eps,
    \]
    as desired, where the first inequality follows from Lemma~\ref{lem:distance_bounds_normalized}.
\end{proof}

If $G=\{g_1,\ldots,g_\beta\}$ is a set of orthonormal vectors, recall that the orthogonal projection onto $\spn G$ is the linear operator $\Pi_{\spn G} = \sum_{i=1}^{\beta} g_ig_i^T$. If there is a set of orthonormal vectors $\T G = \{\T g_1,\ldots,\T g_\beta\}$ such that $\|g_i - \T g_i\|<\eps$, then the following lemma shows that $\|\Pi_{\spn G} - \Pi_{\spn\T{G}}\|$ is bounded.

\begin{lemma}
\label{lem:approximate_projection}
    Let $\{g_1,\ldots,g_\beta\}$ be a set of orthonormal vectors, and let $\{\T g_1,\ldots, \T g_\beta\}$ be a set of orthonormal vectors such that $\| g_i - \T g_i \| < \eps$ for each $i$. We have $\| \Pi_{\spn G} - \Pi_{\spn\T{G}} \| = \left\| \sum_{i=1}^{\beta} g_ig_i^T - \T g_i\T g_i^T \right\| < 2\cdot \beta\cdot \eps$.
\end{lemma}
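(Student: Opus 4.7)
The plan is to apply the triangle inequality term-by-term to the sum $\sum_{i=1}^{\beta}(g_ig_i^T - \T g_i\T g_i^T)$ and bound each rank-one difference $g_ig_i^T - \T g_i\T g_i^T$ by $2\eps$ in operator norm. This will immediately yield the claimed bound of $2\beta\eps$.

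For a single index $i$, the key trick is to split the difference of rank-one projectors by adding and subtracting a hybrid outer product:
\[
g_ig_i^T - \T g_i\T g_i^T = g_i(g_i - \T g_i)^T + (g_i - \T g_i)\T g_i^T.
\]
Each of the two pieces on the right is a rank-one outer product $uv^T$, whose operator norm is exactly $\|u\|\cdot\|v\|$. Since $\|g_i\| = \|\T g_i\| = 1$ and $\|g_i - \T g_i\| < \eps$ by hypothesis, both pieces have operator norm strictly less than $\eps$, and the triangle inequality gives $\|g_ig_i^T - \T g_i \T g_i^T\| < 2\eps$.

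Combining this with the triangle inequality over the sum,
\[
\left\| \sum_{i=1}^{\beta} g_ig_i^T - \T g_i\T g_i^T \right\|
\leq \sum_{i=1}^{\beta} \|g_ig_i^T - \T g_i\T g_i^T\|
< \sum_{i=1}^{\beta} 2\eps = 2\beta\eps,
\]
which is exactly the desired bound. There is no real obstacle here: the proof is a short triangle-inequality argument, with the only substantive step being the split of the rank-one difference via the hybrid term $g_i\T g_i^T$. Note that orthonormality of $G$ and $\T G$ is not actually used in the bound itself; it is only needed to justify that $\sum g_i g_i^T$ and $\sum \T g_i \T g_i^T$ are the orthogonal projections onto $\spn G$ and $\spn \T G$, so that the left-hand side equals $\|\Pi_{\spn G} - \Pi_{\spn \T G}\|$.
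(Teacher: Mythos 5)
Your proof is correct and uses essentially the same decomposition as the paper: writing $g_ig_i^T - \T g_i\T g_i^T$ by adding and subtracting the hybrid $g_i\T g_i^T$, i.e.\ $g_i(g_i-\T g_i)^T + (g_i-\T g_i)\T g_i^T$. The paper does this implicitly by applying the operator to an arbitrary vector $v$ and tracking $\| (g_i^Tv)g_i - (\T g_i^Tv)\T g_i\|$, while you work directly with the operator norm of the rank-one outer products, which is a little cleaner but not a different route.
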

\begin{proof}
    Let $v$ be any vector. We use the triangle inequality two times in the following calculation.
    \begin{align*}
        \left\| \sum_{i=1}^{\beta}(g_i^Tv)g_i - (\T g_i^T v)\T g_i \right\| &\leq \sum_{i=1}^{\beta} \| (g_i^Tv)g_i - (\T g_i^T v)\T g_i \| \\
        &= \sum_{i=1}^{\beta} \| (g_i^Tv)g_i - (\T g_i^T v) g_i + (\T g_i^T v) g_i - (\T g_i^T v)\T g_i \| \\
        &= \sum_{i=1}^{\beta} |g_i^Tv - \T g_i^T v|\cdot \| g_i \| + |\T g_i^T v|\cdot\| g_i - \T g_i \|
    \end{align*}
    But, $\| g_i - \T g_i \| < \eps$, $|\T g_i^T v|\leq \|v\|$,  $\| g_i \| = 1$, and $|g_i^Tv - \T g_i^T v| \leq \|g_i^T - \T g_i^T\| \cdot\|v\|\leq \eps\|v\|$.  Hence,
    \begin{align*}
        \| \sum_{i=1}^{\beta}(g_i^Tv)g_i - (\T g_i^T v)\T g_i \| \leq
         \sum_{i=1}^{\beta} 2\cdot\eps\cdot\|v\|
        =  2\cdot \beta\cdot\eps\cdot\|v\|,
    \end{align*}
    and the proof is complete.
\end{proof}

The main result of this section follows from
Lemma~\ref{lem:approximate_projection} and
Lemma~\ref{lem:approximate_gram_schmidt}.

\approximategramschmidt

\subsection{The harmonic projection operator}
Now, we are ready to describe our harmonic projection operator $\T\Pi_{hr}$.
To obtain this operator, we use the following projection operator into the cycle space.

\begin{lemma}[Cohen et al.~\cite{OneLaplaciansCohen14}, Lemma 3.2]
\label{lem:graph_projection}
Let $K$ be a simplical complex with $n = n_1 + n_0$ total number of edges and vertices, and $\varepsilon >0$. In $\widetilde{O}(n \log n \log (n/\varepsilon))$ time, we can compute symmetric matrices $\widetilde{\Pi}_{cbd}(\eps)$ and $\widetilde{\Pi}_{cyc}(\eps)$\footnotemark such that
\begin{eqnarray}
&& (1-\varepsilon)\Pi_{cbd} \preceq \widetilde{\Pi}_{cbd}(\eps) \preceq \Pi_{cbd} \\
&& (1-\varepsilon)\Pi_{cyc} \preceq \widetilde{\Pi}_{cyc}(\eps) \preceq \Pi_{cyc}
\end{eqnarray}
Moreover, for any $1$-chain $x$, $\widetilde{\Pi}_{cbd}(\eps) \cdot x$ and  $\widetilde{\Pi}_{cyc}(\eps)\cdot x$ can be computed in the same asymptotic running time.
\end{lemma}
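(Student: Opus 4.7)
The plan is to reduce the problem to a graph-Laplacian solve and then transport that approximation through the boundary map. Recall $L_0 = \partial_1\partial_1^T$ is exactly the combinatorial graph Laplacian of the $1$-skeleton of $K$, which is SDD. Two standard identities relate the projections to $L_0^+$:
\[
\Pi_{cbd} = \partial_1^T L_0^+ \partial_1, \qquad \Pi_{cyc} = I - \partial_1^T L_0^+ \partial_1.
\]
The first identity is immediate because $L_0^+ L_0$ is the orthogonal projection onto $\im(L_0)=\im(\partial_1)$, so for $x = \partial_1^T u$ (with $u\in\im(\partial_1)$ chosen without loss of generality, since $\ker(\partial_1^T)$ lies in $\ker$ of the whole expression) we have $\partial_1^T L_0^+\partial_1 x = \partial_1^T L_0^+ L_0 u = \partial_1^T u = x$, while the expression obviously kills $\ker(\partial_1)$. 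The second identity follows from the Hodge decomposition $I = \Pi_{cbd}+\Pi_{cyc}$.

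My first step is to invoke a nearly-linear-time SDD/Laplacian solver (e.g.\ Spielman--Teng or Koutis--Miller--Peng) on $L_0$. Such a solver produces, for any prescribed $\eps'>0$, a symmetric linear operator $\widetilde{L_0^+}$ satisfying $(1-\eps')L_0^+ \preceq \widetilde{L_0^+} \preceq (1+\eps')L_0^+$ with a per-vector running time of $\tilde O(n\log n\log(n/\eps'))$, where $n=n_1+n_0$. Since conjugation by $\partial_1$ preserves the Loewner order, the definition $\widetilde{\Pi}_{cbd}(\eps):=\partial_1^T\widetilde{L_0^+}\,\partial_1$ immediately yields $(1-\eps')\Pi_{cbd} \preceq \widetilde{\Pi}_{cbd}(\eps) \preceq (1+\eps')\Pi_{cbd}$, from which a trivial rescaling by $1/(1+\eps')$ delivers the one-sided bound in the lemma statement (with $\eps'$ set to $\Theta(\eps)$). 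Applying $\partial_1$ and $\partial_1^T$ costs $O(n_1)$ per vector, so applying $\widetilde{\Pi}_{cbd}(\eps)\cdot x$ fits within the target $\tilde O(n\log n\log(n/\eps))$ budget.

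For the cycle projection I would set $\widetilde{\Pi}_{cyc}(\eps):=I - \widetilde{\Pi}_{cbd}(\eps)$, using the \emph{opposite} sign of one-sided error from the Laplacian solver; many modern Laplacian solvers allow one to obtain $\widetilde{L_0^+}\preceq L_0^+$ or $\widetilde{L_0^+}\succeq L_0^+$ by a preconditioning trick (e.g.\ replacing $\widetilde{L_0^+}$ by $\widetilde{L_0^+}/(1+\eps')$ or by iterating the preconditioner once more in a Richardson-style correction). Under that one-sided bound, the computation $I - \widetilde{\Pi}_{cbd}$ inverts the inequality direction on the coboundary subspace, and combined with the fact that $\widetilde{\Pi}_{cyc}$ acts as the identity on $\ker(\partial_1)$ (since $\partial_1x=0$ implies $\widetilde{\Pi}_{cbd}x=0$), this yields the claimed relation $(1-\eps)\Pi_{cyc}\preceq\widetilde{\Pi}_{cyc}(\eps)\preceq\Pi_{cyc}$.

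The main obstacle I anticipate is precisely that final step: ensuring the \emph{one-sided} Loewner bound for $\widetilde{\Pi}_{cyc}$, rather than a two-sided bound with error on the orthogonal subspace. A naive two-sided approximation of $L_0^+$ produces an error for $\widetilde{\Pi}_{cyc}$ that lives on the coboundary subspace (proportional to $\Pi_{cbd}$), which would only give an input-relative (not output-relative) guarantee. Resolving this cleanly requires picking the direction of the Laplacian solver's approximation so that the error on $\widetilde{\Pi}_{cbd}$ points the right way when subtracted from $I$, which is standard but must be arranged carefully. Once that sign is fixed, the running time follows from combining the nearly-linear solver cost with the sparsity of $\partial_1$.
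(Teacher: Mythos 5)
This lemma is cited from Cohen et al.\ rather than proved in the present paper, so there is no internal proof to compare against; your attempt must stand on its own. Your derivation of $\widetilde{\Pi}_{cbd}$ is sound: the identity $\Pi_{cbd}=\partial_1^T L_0^+\partial_1$ is correct, conjugation by a fixed matrix preserves Loewner order, and a one-sided rescaling of a two-sided SDD-solver guarantee delivers $(1-\eps)\Pi_{cbd}\preceq\widetilde{\Pi}_{cbd}\preceq\Pi_{cbd}$ at the stated cost. That half of the lemma is fine.

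The gap is in $\widetilde{\Pi}_{cyc}$, and your proposed fix does not close it. Writing $\widetilde{\Pi}_{cyc}:=I-\widetilde{\Pi}_{cbd}$ with $\widetilde{\Pi}_{cbd}\succeq\Pi_{cbd}$ does give the upper bound $\widetilde{\Pi}_{cyc}\preceq\Pi_{cyc}$, but the lower bound fails. Concretely, take any $x\in\im\partial_1^T$. Then $x^T\Pi_{cyc}x=0$, while
\[
x^T\widetilde{\Pi}_{cyc}x \;=\; x^T(I-\widetilde{\Pi}_{cbd})x \;=\; \|x\|^2 - x^T\widetilde{\Pi}_{cbd}x \;\le\; 0,
\]
with strict inequality for generic approximate solves (since $\widetilde{\Pi}_{cbd}\succeq\Pi_{cbd}$ and $\Pi_{cbd}x=x$ forces $x^T\widetilde{\Pi}_{cbd}x\ge\|x\|^2$). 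Hence $(1-\eps)\Pi_{cyc}\preceq\widetilde{\Pi}_{cyc}$ is violated on the coboundary subspace no matter how small $\eps'$ is and no matter which direction you bias the Laplacian solve. The two-sided bound $\Pi_{cyc}-\eps'\Pi_{cbd}\preceq\widetilde{\Pi}_{cyc}\preceq\Pi_{cyc}$ that you actually obtain cannot be upgraded to the lemma's output-relative form, because $\eps\Pi_{cyc}-\eps'\Pi_{cbd}$ is never PSD for $\eps'>0$: these two projectors have orthogonal ranges.

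What is missing is a mechanism that forces $\widetilde{\Pi}_{cyc}$ to \emph{exactly} annihilate $\im\partial_1^T$, so that all of its approximation error lives inside the cycle space where it can be absorbed into a multiple of $\Pi_{cyc}$. The construction that the present paper uses for the analogous difficulty with the boundary projection — conjugating by the exact (oblique) spanning-tree projector $I-P_T$, as in $\widetilde{\Pi}_{bd}=(I-P_\Gamma)(I-P_T)\overline{\Pi}_{bd}(I-P_T)^T(I-P_\Gamma)^T$ — is exactly the kind of fix needed here; indeed, Lemma~\ref{lem:PT} is attributed to the proof of Cohen et al.'s Lemma 3.2, which strongly suggests their $\widetilde{\Pi}_{cyc}$ is built this way rather than as $I-\widetilde{\Pi}_{cbd}$. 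Without such a conjugation (or a dual-graph / cycle-basis formulation that represents $\Pi_{cyc}$ directly as a sandwich with exact endpoint maps into $\ker\partial_1$), the claimed one-sided Loewner bound for $\widetilde{\Pi}_{cyc}$ cannot be derived from the $\widetilde{\Pi}_{cbd}$ approximation.
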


\footnotetext{Cohen et al.~describe the operator $\widetilde{\Pi}_{cyc}$ as being an approximate projection onto $\im\boundary_2$, \textit{not} $\ker\boundary_1$ as we describe it. The complex $K$ in Cohen et al.~satisfies $\ker\boundary_1=\im\boundary_2$, so in their setting, the two operators are equivalent. However, inspecting their algorithm reveals that when $\ker\boundary_1\neq \im\boundary_2$, their algorithm actually approximates $\Pi_{cyc}$ and not $\Pi_{bd}$.}

We find the following two basic lemmas useful when analyzing properties of our harmonic projection operator.

\begin{lemma}
\label{lem:matrix_norm_to_loewner_bound}
If $A$ is a symmetric matrix, then
$\| A \| \leq \eps \Longleftrightarrow -\eps I \preceq A \preceq \eps I$.
\end{lemma}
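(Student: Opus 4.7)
The plan is to reduce both sides of the biconditional to the same statement about the eigenvalues of $A$, using the spectral theorem for symmetric matrices.

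First I would invoke the spectral theorem to write $A = Q \Lambda Q^T$, where $Q$ is orthogonal and $\Lambda = \mathrm{diag}(\lambda_1, \ldots, \lambda_n)$ contains the (real) eigenvalues of $A$. Since the operator $2$-norm of a symmetric matrix equals its spectral radius, the left-hand condition $\|A\| \leq \varepsilon$ is equivalent to $|\lambda_i| \leq \varepsilon$ for every $i$, i.e.\ $\lambda_i \in [-\varepsilon, \varepsilon]$ for all $i$.

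Next, I would unpack the right-hand condition. By definition of the Loewner order, $-\varepsilon I \preceq A \preceq \varepsilon I$ means that both $A + \varepsilon I$ and $\varepsilon I - A$ are positive semidefinite. These two matrices are symmetric with the same eigenvectors as $A$, and their eigenvalues are $\lambda_i + \varepsilon$ and $\varepsilon - \lambda_i$ respectively. A symmetric matrix is positive semidefinite iff all of its eigenvalues are non-negative, so the two PSD conditions together are equivalent to $\lambda_i \geq -\varepsilon$ and $\lambda_i \leq \varepsilon$ for every $i$, i.e.\ $\lambda_i \in [-\varepsilon, \varepsilon]$ for all $i$.

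Since both sides of the claimed biconditional reduce to the same eigenvalue condition, they are equivalent, completing the proof. There is no real obstacle here; the only care needed is in citing the characterization of PSD matrices via non-negative eigenvalues and the identity $\|A\| = \max_i |\lambda_i|$ for symmetric $A$, both of which are standard and are referenced in the Background section of the paper.
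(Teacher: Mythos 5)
Your proof is correct and takes essentially the same route as the paper: both arguments reduce the two conditions to the statement that every eigenvalue of $A$ lies in $[-\varepsilon,\varepsilon]$, using the fact that for a symmetric matrix the operator norm equals the spectral radius and positive semidefiniteness equals nonnegativity of eigenvalues. The paper splits the biconditional into two implications and invokes Courant--Fischer for the reverse direction, whereas you diagonalize once and note that $A+\varepsilon I$ and $\varepsilon I - A$ share eigenvectors with $A$; this is a slightly more streamlined presentation of the same idea, not a different proof.
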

\begin{proof}
    Let $\lambda_1 \geq \cdots \geq \lambda_n$ be the eigenvalues of $A$. Note that $\lambda_1 x^Tx \geq x^T A x \geq \lambda_n x^Tx$ for any vector $x$.
    \par
    \underline{($\Rightarrow$)} Suppose $\| A \| \leq \eps$. Then $\frac{\| Ax \|}{\| x \|} \leq \eps$ for any vector $x$. In particular, if $v_i$ is the eigenvector for $\lambda_i$,
    \begin{align*}
    \eps \geq \frac{\| Av_i \|}{\|v_i\|} = \frac{\sqrt{ (A v_i)^T A v_i }}{\sqrt{v_i^T v_i}}
    = \frac{\sqrt{ \lambda_i^2 v_i^T v_i }}{\sqrt{v_i^T v_i}}
    = | \lambda_i |
    \end{align*}
    Since all eigenvalues of $A$ have absolute value at most $\eps$, $0\preceq A+\eps I$ and $A-\eps I\preceq 0$, as desired.
    \par
    \underline{($\Leftarrow$)} Now suppose that $-\eps I \prec A \prec \eps I$. Then $-\eps \leq x^T A x / x^Tx \leq \eps$ for all vectors $x$. By the Courant-Fischer Theorem, this implies that all eigenvalues of $A$ are in $[-\eps, \eps]$. Also, we have $\sigma_{\max}(A) = |\lambda_{\max}(A)|$ as $A$ is symmetric.  Thus, $\|A\| = \sigma_{\max}(A) = |\lambda_{\max}(A)| \leq \eps$.
\end{proof}

\begin{lemma}
\label{lem:lowener_to_matrix_output_rel}
    Let $\Pi$ be an orthogonal projection operator, and let $\T\Pi$ be a symmetric operator such that $(1-\eps)\Pi \preceq \T\Pi \preceq (1+\eps)\Pi$. Let $p$ be any vector, and let $h = \Pi p$ and $\T h = \T\Pi p$. Then $\| h - \T h \| < \eps \| h \|$.
\end{lemma}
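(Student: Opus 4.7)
The plan is to reduce the problem to an operator-norm statement on $\im\Pi$, where the Loewner sandwich becomes a spectral bound that can be applied directly to $h$.

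First I would decompose $p = h + p'$ with $p' := p - h \in \ker\Pi$, using the orthogonality of $\Pi$. The lower Loewner bound $(1-\eps)\Pi \preceq \T\Pi$ implies that $\T\Pi$ is positive semidefinite (for $\eps \le 1$), so its kernel coincides with the kernel of its quadratic form. For any $x\in\ker\Pi$ we have $x^T\Pi x = 0$, and the sandwich forces $x^T\T\Pi x = 0$; combined with PSDness of $\T\Pi$, this gives $\T\Pi x = 0$. Hence $\ker\Pi \subseteq \ker\T\Pi$, and in particular $\T\Pi p' = 0$, so $\T h = \T\Pi p = \T\Pi h$.

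Next, because $\T\Pi$ is symmetric, $\im\T\Pi = (\ker\T\Pi)^\perp \subseteq (\ker\Pi)^\perp = \im\Pi$, so $\T\Pi$ restricts to a symmetric operator on $\im\Pi$. On $\im\Pi$ the operator $\Pi$ acts as the identity, so the Loewner sandwich specializes to $(1-\eps)I \preceq \T\Pi|_{\im\Pi} \preceq (1+\eps)I$. Applying Lemma~\ref{lem:matrix_norm_to_loewner_bound} to $\T\Pi|_{\im\Pi} - I$ gives the operator-norm bound $\|I - \T\Pi|_{\im\Pi}\| \leq \eps$. Since $h \in \im\Pi$, the final estimate is
\[
\|h - \T h\| = \|h - \T\Pi h\| = \|(I - \T\Pi|_{\im\Pi})\, h\| \leq \eps\,\|h\|.
\]

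The main subtlety I would be careful to justify is the pair of inclusions $\ker\Pi \subseteq \ker\T\Pi$ and $\im\T\Pi \subseteq \im\Pi$, since the Loewner inequalities are only statements about quadratic forms. Without these inclusions one cannot rule out that $\T\Pi$ moves $h$ out of $\im\Pi$, in which case the gap $\|(\Pi-\T\Pi)h\|$ need not be controlled by $h^T(\Pi-\T\Pi) h$. Once both inclusions are in hand, restricting to $\im\Pi$ turns the Loewner bound into a genuine spectral bound on $\T\Pi|_{\im\Pi}$, and the output-relative estimate falls out immediately.
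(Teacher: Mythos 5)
Your proof is correct and takes essentially the same approach as the paper: both reduce the claim to the kernel/image inclusion $\ker\Pi\subseteq\ker\T\Pi$ (equivalently $\im\T\Pi\subseteq\im\Pi$) so that the error $\Pi-\T\Pi$ factors through $\Pi$, then apply the operator-norm bound coming from Lemma~\ref{lem:matrix_norm_to_loewner_bound}. Your version is somewhat more explicit than the paper's, which asserts $\im\T\Pi\subseteq\im\Pi$ ``follows from the assumption'' without the positive-semidefiniteness argument you supply; that extra detail is welcome, though note it implicitly requires $\eps\le 1$, which is always the case in the paper's usage.
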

\begin{proof}
    First observe that $-\eps\Pi \preceq \T\Pi-\Pi \preceq \eps\Pi$; this observation and Lemma \ref{lem:matrix_norm_to_loewner_bound} imply that $\| \T\Pi - \Pi \| < \eps$ as $\Pi \preceq I$. This gives the loose bound of $\| h_i - \T h_i \| \leq \eps \| p \|$. To get the tighter bound, we also need the observation that $\im\T\Pi \subset \im\Pi$, which follows from the assumption of the lemma. Therefore, $\T\Pi = \T\Pi\circ\Pi$ and $\Pi - \T\Pi = (\Pi - \T\Pi)\circ \Pi$. We therefore know that $\| (\Pi - \T\Pi) p \| = \| (\Pi - \T\Pi)\Pi p \| \leq \eps\|\Pi p\|$, or in the given notation, $\| h - \T h\| \leq \eps\|h\|$.
\end{proof}

Now we are ready to present our harmonic projection operator.

\harmonicprojection*

\begin{proof}
Let $\{p_1, \ldots,p_\beta\}$ be the cohomology basis of Corollary~\ref{cor:cohom_harmonic_prop}, and let $p_{\max}$ be the length of the longest vector of this basis.
First, we compute an approximate harmonic basis,
$\{\T h_1/\|\T h_1\|, \ldots, \T h_\beta/\|\T h_\beta\|\}$, where
$\T h_i = \widetilde\Pi_{cyc}(\eps')(p_i)$, for all $1\leq i\leq \beta$ and $\widetilde\Pi_{cyc}(\eps')$ is the operator of
Lemma~\ref{lem:graph_projection}, for some $\eps'$ to be determined. By Lemma~\ref{lem:lowener_to_matrix_output_rel}, we have
\begin{align}
    -\eps' \Pi_{cyc} \preceq \widetilde{\Pi}_{cyc}(\eps') - \Pi_{cyc} \preceq \eps' \Pi_{cyc}
    &\Rightarrow
    \|(\widetilde{\Pi}_{cyc}(\eps') - \Pi_{cyc})p_i\| \leq \eps'\cdot \|h_i\| \nonumber \\
    &\Rightarrow
    \|\widetilde{h}_i - h_i\| \leq \eps' \cdot \|h_i\|  \nonumber \\
    &\Rightarrow
    \|\widetilde{h}_i/\|\widetilde{h}_i\| - h_i/\|h_i\|\| \leq 2\eps' &&\text{(Lemma~\ref{lem:distance_bounds_normalized})}
    \label{eqn:eps_h_hprim}
\end{align}

Next, we apply Gram Schmidt to $\{\T h_1/\|\T h_1\|, \ldots, \T h_\beta/\|\T h_\beta\|\}$ to obtain $\{\T g_1, \ldots, \T g_\beta\}$.  Let $\{g_1, \ldots, g_\beta\}$ be the result of applying Gram Schmidt to the exact harmonic basis $\{h_1/\|h_1\|, \ldots, h_\beta/\|h_\beta\|\}$.  By Lemma~\ref{lem:approximate_gram_schmidt},
the fact that
$\{h_1/\|h_1\|, \ldots, h_\beta/\|h_\beta\|\}$ is $\delta$-independent from Corollary~\ref{cor:cohom_harmonic_prop}-(ii)),
and the bound in Equation (\ref{eqn:eps_h_hprim}), we have
$
\|g_i - \T g_i\| \leq \left(\frac{8\beta}{\delta}\right)^\beta\eps'.
$
We define our projection operator as
\[
\widetilde\Pi_{hr}(\varepsilon) = \sum_{i=1}^{\beta} \T g_i\T g_i^{T}.
\]
By combining Corollary~\ref{cor:approximate_gram_schmidt_operator}, the fact that
$\{h_1/\|h_1\|, \ldots, h_\beta/\|h_\beta\|\}$ is $\delta$-independent, and the approximation bound of Equation (\ref{eqn:eps_h_hprim}), we have
\[
\|\widetilde\Pi_{hr}(\eps) - \Pi_{hr}\| \leq
2\cdot \beta\cdot \left(\frac{8\beta}{\delta}\right)^\beta
 \cdot \eps'.
\]
By setting $\eps' = (\delta/8\beta)^{\beta+1}\cdot \eps$, we obtain $\|g_i - \T g_i\| \leq \eps$ for all $1\leq i \leq \beta$, which proves part (i) of the lemma. Furthermore, $\|\widetilde\Pi_{hr}(\eps) - \Pi_{hr}\| \leq \eps$, which is equivalent to part (ii) of the lemma by Lemma~\ref{lem:matrix_norm_to_loewner_bound}.
Substituting the value of $\delta$ from Corollary~\ref{cor:cohom_harmonic_prop}, we obtain
\[
\eps' = \left(\frac{\lambda_{\min}}{\beta\cdot n_1 n_2}\right)^{\Omega(\beta^2)}
\Rightarrow
\log\frac{1}{\eps'} = O\left(\beta^2\log \frac{\beta\cdot n_1 n_2}{\lambda_{\min}}\right)
= O\left(\beta^2\log \frac{n_1 n_2}{\lambda_{\min}}\right),
\]
where the last inequality is because $\beta\leq n_1$.

By Corollary~\ref{cor:cohom_harmonic_prop}, $\{p_1, \ldots, p_\beta\}$
can be computed in $O(n\log n + \beta n)$ time.
By Lemma~\ref{lem:graph_projection}, $\widetilde\Pi_{cyc}(\eps')$ can be applied in
$\tilde{O}(\beta^2 \cdot n \cdot \log n \cdot \log(\frac{n}{\lambda_{min}(X)\cdot \eps}))$ time.
We can normalize each $\T h_i$ in $O(\beta n_1)$ time.
Finally, we can apply Gram Schmidt to $\{\T h_1/\|\T h_1\|, \ldots, \T h_\beta/\|\T h_\beta\|\}$ in $O(\beta^2 n_1)$ time.
Therefore, $\{g_1 \ldots g_\beta\}$ can be computed in
$\tilde{O}(\beta^2 \cdot n \cdot \log n \cdot \log(\frac{n}{\lambda_{min}(X)\cdot \eps}))$ time.
\par
Constructing the basis $\{g_1,\ldots,g_\beta\}$ is also the limiting step for computing the projection $\widetilde\Pi_{hr}(\eps)\cdot x$ for a vector $x$. Once the basis is built, the projection $\widetilde\Pi_{hr}(\eps)\cdot x$ can be computed in $O(\beta n_1)$ time; this is because computing each product $g_ig_i^{T}x$ takes $O(n_1)$ time. Therefore, the projection $\widetilde\Pi_{hr}(\eps)\cdot x$ can be computed in the same asymptotic running time as computing the basis.
\end{proof}

\section{Boundary Projection}
\label{sec:boundary_proj}
In this section, we build a projection operator into the boundary space based on the operators $\T\Pi_{hr}$ and $\T\Pi_{cbd}$ of Lemma~\ref{lem:harmonic_projection} and Lemma~\ref{lem:graph_projection}, respectively.

The natural candidate for an approximate boundary projection is $\overline\Pi_{bd} = I - \T\Pi_{hr} -\T\Pi_{cbd}$ as the exact boundary projection $\Pi_{bd} = I - \Pi_{hr} - \Pi_{cbd}$ by the Hodge Decomposition.  It is straight forward to see that $\overline\Pi_{bd}$ has bounded input-relative error, that is $-\eps I \preceq \Pi_{bd} - \overline\Pi_{bd} \preceq \eps I$.  However, we need a projection into the boundary space with output-relative error for our solver. We instead define our approximate boundary projection as $\T\Pi_{bd} = (I-P_\Gamma)(I-P_T)\overline \Pi_{bd}(I-P_T)^T(I-P_\Gamma)^T$ using operators $P_T$ and $P_\Gamma$ we introduce in this section.
\par
In this section, we first define $P_T$ and $P_\Gamma$ and prove some useful properties about them. Then we show that $\T\Pi_{bd}$ is a projection operator into the boundary space with output-relative error bounds.

\subsection{The helper operators}

Cohen et al.~\cite{CohenLogSqrtSolver14} describe the operator $P_T$ in the proof of Lemma 3.2 in their paper. Given a spanning tree $T$, $P_T$ maps a $1$-chain in $K$ to a $1$-chain in $T$ with the same boundary, i.e. $\boundary_1\Pi_T x = \boundary_1 x$ for all $x$.

We introduce the new operator $P_\Gamma$. Given a fixed homology basis $\Gamma$, $P_\Gamma$ is the linear map that maps each $1$-cycle to a homologous $1$-cycle that is a linear combination of the cycles in the basis $\Gamma$.

We bound the output-relative error of $\T\Pi_{bd} = (I-P_\Gamma)(I-P_T)\overline \Pi_{bd}(I-P_T)^T(I-P_\Gamma)^T$ in this section.  The following lemma captures two key properties we need to that end.

\begin{lemma}
\label{lem:PTPGammaProp}
Let $P_T$ and $P_\Gamma$ be as defined above.  We have
\begin{enumerate}
    \item [(i)] $(I - P_{\Gamma})(I- P_T) \Pi_{bd} = \Pi_{bd}$, and
    \item [(ii)] $\Pi_{bd} (I - P_{\Gamma})(I - P_T) = (I - P_{\Gamma})(I - P_T)$.
\end{enumerate}
\end{lemma}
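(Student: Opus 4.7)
\medskip

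\noindent\textbf{Proof proposal.} The plan is to unpack the defining properties of $P_T$ and $P_\Gamma$ and read off both statements from the structural observation that $(I - P_\Gamma)(I - P_T)$ lands in the boundary space and is the identity on it.

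First, I would record the defining properties of the two helper operators. By construction, $\partial_1 P_T x = \partial_1 x$ for every $1$-chain $x$, so $(I - P_T)x$ always lies in $\ker \partial_1$, i.e.\ it is a cycle. Similarly, for every cycle $z$, the chain $P_\Gamma z$ is homologous to $z$, so $(I - P_\Gamma) z$ is a boundary. Chaining these gives the key structural fact: for an arbitrary $1$-chain $y$, the chain $(I - P_T)y$ is a cycle, hence $(I - P_\Gamma)(I - P_T) y$ is a boundary. This says $\im\bigl((I - P_\Gamma)(I - P_T)\bigr) \subseteq \im \partial_2$, and since $\Pi_{bd}$ is the identity on $\im \partial_2$, statement (ii) follows immediately.

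For statement (i) I would show that $(I - P_\Gamma)(I - P_T)$ restricted to $\im \partial_2$ is the identity; this suffices because for any $y$, $\Pi_{bd} y \in \im \partial_2$. Let $x \in \im \partial_2$; in particular $\partial_1 x = 0$. Then $\partial_1 P_T x = \partial_1 x = 0$, so $P_T x$ is a cycle supported on the spanning tree $T$. A tree has no nonzero $1$-cycles, so $P_T x = 0$ and $(I - P_T)x = x$. Now apply $(I - P_\Gamma)$: by definition $P_\Gamma x$ is a linear combination $\sum c_i \gamma_i$ of the homology basis $\Gamma$ that is homologous to $x$. Since $x$ is itself a boundary, $\sum c_i \gamma_i$ must be a boundary as well, but $\Gamma$ is a homology basis so no nontrivial linear combination of its elements is null-homologous; hence every $c_i = 0$ and $P_\Gamma x = 0$. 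Thus $(I - P_\Gamma)(I - P_T) x = x$, which gives (i).

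I do not anticipate a serious obstacle here: the proof is essentially a bookkeeping exercise on the definitions, relying only on the two elementary facts that (a) a spanning tree carries no nontrivial $1$-cycle, and (b) the vectors in a homology basis are linearly independent modulo boundaries. The only minor point to be careful about is the direction of the inclusion in the image/kernel argument for (ii), namely confirming that $(I-P_\Gamma)$ is being applied to a genuine cycle (guaranteed by the $(I - P_T)$ on its right) so that its output is well-defined as a boundary.
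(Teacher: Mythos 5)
Your proof is correct and follows essentially the same route as the paper's: for (i) you argue that $P_T$ kills boundaries because a tree carries no nonzero cycle and $P_\Gamma$ kills boundaries by the linear independence of the homology basis modulo boundaries, and for (ii) you observe that $(I-P_T)$ outputs a cycle and $(I-P_\Gamma)$ then outputs a boundary, on which $\Pi_{bd}$ acts as the identity. The only difference is cosmetic — you spell out the spanning-tree and basis-independence steps slightly more explicitly than the paper does.
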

\begin{proof}
    First, we prove (i).  $\Pi_{bd}x$ returns the boundary part $x_{bd}$ of a chain $x$. As the boundary of a boundary chain is zero, then
    $
    P_T\Pi_{bd}x = 0 \Rightarrow (I-P_T)\Pi_{bd} = \Pi_{bd}.
    $
    As well, as $x_{bd}$ is a boundary, the only linear combination of the set $\Gamma$ homologous to $x_{bd}$ is 0, so $P_\Gamma\Pi_{bd}x = 0\Rightarrow (I - P_{\Gamma})\Pi_{bd} = \Pi_{bd}$. Together, these results imply statement (i).

    Next, we prove (ii).
    Let $x$ be any one chain.  Since $P_T\cdot v$ is a chain with the same boundary as $x$, $(I-P_T)x$ is a cycle.
    By the definition of $P_\Gamma$, $P_\Gamma(I-P_T)x$ is homologous to $(I-P_T)x$, so $(I-P_\Gamma)(I-P_T)x$ is a boundary. Thus, $\Pi_{bd} (I-P_\Gamma)(I-P_T)x = (I-P_\Gamma)(I-P_T)x$
\end{proof}

In addition, to the lemma above, we need to bound the norm of $(I-P_T)(I-P_T)^T$, and $(I-P_\Gamma)(I-P_\Gamma)^T$, which we do in the following section.

\subsubsection{Bounding the operators}

Cohen et al.~describe a bound on the norm of $(I-P_T)(I-P_T)^T$ that we formalize in the following lemma.

\begin{lemma} [Cohen et al., Proof of Lemma 3.2~\cite{OneLaplaciansCohen14}]
\label{lem:PT}
Let $K$ be any simplicial complex, and let
$T$ be a spanning tree of the $1$-skeleton of $K$.  Let $P_T$ be the operator that maps any 1-chain $x$ to the unique 1-chain on $T$ with the same boundary, that is (i) $P_T\cdot x\in C_1(T)$, and (ii) $\partial_1 x = \partial_1 P_T\cdot x$. We have $(I - P_T)(I-P_T)^T \preceq n_1^2 I$, where $n_1$ is the number of edges of $K$.
Further,
for any $x$, $P_T\cdot x$ can be computed in $O(n_1)$ time.
\end{lemma}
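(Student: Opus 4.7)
The plan is to prove the two claims of the lemma separately: well-definedness together with the linear-time computation, and then the operator-norm bound.

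First, I would verify $P_T$ is well-defined. Since $T$ is a spanning tree of the $1$-skeleton of $K$, $T$ has trivial first homology, so $\partial_1$ restricted to $C_1(T)$ is injective. Moreover, any $0$-chain of the form $\partial_1 x$ with $x\in C_1(K)$ has zero sum on each connected component of $K$, which is precisely the image of $\partial_1|_{C_1(T)}$, so for every $x\in C_1(K)$ there is exactly one $y\in C_1(T)$ with $\partial_1 y=\partial_1 x$. To compute $P_T x$ in $O(n_1)$ time I would leaf-strip on $T$: given $b=\partial_1 x$, repeatedly pick a leaf $v$, read off the coefficient of its unique tree edge $(u,v)$ from $b[v]$ with the appropriate sign, subtract this contribution from $b[u]$, and delete $v$. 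Every leaf takes $O(1)$ amortized time, so the total cost is $O(n_0)=O(n_1)$ after the initial $O(n_1)$ work to compute $b$.

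For the norm bound, I would decompose any $1$-chain as $x=x_T+x_N$, where $x_T\in C_1(T)$ and $x_N$ is supported on non-tree edges. Since $P_T$ is the identity on $C_1(T)$, we have $(I-P_T)x = x_N - P_T x_N$, and because $x_N$ and $P_T x_N$ have disjoint supports,
\[
\|(I-P_T)x\|_2^2 = \|x_N\|_2^2 + \|P_T x_N\|_2^2.
\]
For every non-tree edge $e$, $P_T e$ is the unique oriented tree path $\pi_T(e)$ between the endpoints of $e$, whose coordinates lie in $\{-1,0,+1\}$ and are supported on at most $n_0-1$ tree edges. Writing $P_T x_N=A x_N$ where the columns of $A$ are the vectors $\pi_T(e)$ for $e\notin T$,
\[
\|A\|_2^2 \le \|A\|_F^2 \le (n_1-n_0+1)(n_0-1) \le n_1^2/4,
\]
after a one-variable maximization in $n_0$. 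Hence $\|I-P_T\|_2^2\le 1+n_1^2/4\le n_1^2$ for $n_1\ge 2$ (the cases $n_1\le 1$ being trivial since then $P_T=I$). Using $\|B B^T\|=\|B\|_2^2$ for any matrix $B$, this gives $\|(I-P_T)(I-P_T)^T\|_2\le n_1^2$, and invoking Lemma~\ref{lem:matrix_norm_to_loewner_bound} together with the positive semidefiniteness of $(I-P_T)(I-P_T)^T$ yields the Loewner bound $(I-P_T)(I-P_T)^T\preceq n_1^2 I$.

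The main subtlety I expect is that $P_T$ is not an orthogonal projection, so the bound on $\|I-P_T\|$ cannot come from a one-line appeal to projection geometry. The support-disjointness observation, combined with the Frobenius estimate on $A$, is what keeps the bound as tight as $n_1^2$ rather than something polynomially larger; everything else is bookkeeping.
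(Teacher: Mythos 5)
Your proof is correct, and the norm-bound argument is genuinely different from the paper's. The paper argues directly about the matrix $I - P_T$: it observes that $P_T$ maps every edge to an oriented simple tree path, so all entries of $P_T$ are in $\{-1,0,+1\}$ and its diagonal entries are non-negative; consequently every entry of $I - P_T$ has absolute value at most one, and the row-norm bound of Lemma~\ref{lem:max_eig_bound_row_col} gives $\|I - P_T\| \leq n_1$ in one step. You instead decompose $x$ into its tree and non-tree parts, exploit that $P_T$ is the identity on $C_1(T)$ so that $(I-P_T)x = x_N - P_T x_N$, use support-disjointness to split the squared norm, and then bound the restriction of $P_T$ to non-tree edges via its Frobenius norm. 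Your route is a bit longer but actually yields the strictly stronger estimate $\|I-P_T\|^2 \leq 1 + n_1^2/4$, whereas the paper's entry-wise argument yields $\|I-P_T\| \leq n_1$; both imply the stated Loewner bound $(I-P_T)(I-P_T)^T \preceq n_1^2 I$. The well-definedness and leaf-stripping arguments are essentially the paper's (the paper simply asserts well-definedness and sketches the same $O(n_1)$ leaf-peeling routine). One small point: your Frobenius step implicitly assumes the $1$-skeleton is connected so that a spanning tree has exactly $n_0 - 1$ edges; if $K$ is disconnected and $T$ is a spanning forest, the same counting goes through component by component, so the conclusion is unaffected, but it's worth noting since the lemma is stated for ``any simplicial complex.''
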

\begin{proof}
Note that $P_T$ maps each edge to a simple path, hence all elements of $P_T$ are in $\{0, 1, -1\}$.  Further, the diagonal elements of $P_T$ are non-negative; if an edge is on $T$ then $P_T$ map it to itself, otherwise, it maps it to a path that does not contain the edge. It follows that all elements of $I - P_T$ have absolute value at most one.
Therefore, by Lemma~\ref{lem:max_eig_bound_row_col}, $\|I-P_T\|\leq n_1$, hence
\[
(I - P_T)(I-P_T)^T \preceq n_1^2 I
\]

To compute $P_T\cdot x$, we first compute the required boundary $d = \partial_1\cdot x$ in time proportional to the total number of edges in $K$, which is $O(n_1)$.
For each leaf $\ell$ of $T$, $d$ determines the required flow on its only incident edge $e_\ell$.  We assign this flow to $e_\ell$, update the value of $d$ on the other incident edge to $e_\ell$, and recurse to the tree $T\setminus\{\ell,e_\ell\}$. We spend $O(1)$ time per recursion, so the total running time will be $O(n_1)$.
\end{proof}

Next, we bound the norm of $(I-P_\Gamma)(I-P_\Gamma)^T$. To that end, we need a couple of auxiliary lemmas about the $2$-norm of matrices.

\begin{lemma}
\label{lem:max_eig_bound_row_col}
Let $A$ be a $n_{row}\times n_{col}$ matrix. We have:
(i) $\|A\| \leq \sqrt{n_{row}}\cdot row_{\max}$, and
(ii) $\|A\| \leq \sqrt{n_{col}}\cdot col_{\max}$,
where $row_{max}$, and $col_{\max}$ are the maximum Euclidean norm of the rows and columns of $A$, respectively.
\end{lemma}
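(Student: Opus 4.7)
The plan is to bound the operator $2$-norm by the Frobenius norm and then bound the Frobenius norm in two different ways, one via row sums and one via column sums. The key identity is $\|A\| \le \|A\|_F = \sqrt{\sum_{i,j} a_{ij}^2}$, which comes from the fact that the sum of squared singular values equals the Frobenius norm squared, and the operator norm is the largest singular value.

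For part (i), I would expand $\|A\|_F^2 = \sum_{i=1}^{n_{row}} \|r_i\|^2$, where $r_i$ is the $i$-th row of $A$. Since $\|r_i\| \le row_{\max}$ for every $i$, we obtain $\|A\|_F^2 \le n_{row}\cdot row_{\max}^2$, and therefore $\|A\| \le \|A\|_F \le \sqrt{n_{row}}\cdot row_{\max}$.

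Alternatively, one could give a direct proof via Cauchy--Schwarz: for any unit vector $x$, $\|Ax\|^2 = \sum_{i=1}^{n_{row}} (r_i \cdot x)^2 \le \sum_{i=1}^{n_{row}} \|r_i\|^2\|x\|^2 \le n_{row}\cdot row_{\max}^2$, so taking the supremum over unit $x$ gives the same bound. Either proof is fine; the Frobenius-norm route is cleaner and generalizes immediately.

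For part (ii), I would apply the same argument to $A^T$: since $\|A\| = \|A^T\|$ and the columns of $A$ are the rows of $A^T$, we have $\|A\| = \|A^T\| \le \sqrt{n_{col}}\cdot row_{\max}(A^T) = \sqrt{n_{col}}\cdot col_{\max}$. There is no real obstacle here; the only thing to be a little careful about is just stating the identification of Frobenius norm with the sum of squared row (resp.\ column) norms, and the standard bound $\|A\| \le \|A\|_F$. The whole proof is only a handful of lines.
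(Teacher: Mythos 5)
Your proposal is correct. Your primary route goes through the Frobenius norm via $\|A\| \le \|A\|_F$, which is a clean and slightly more abstract argument than the paper's: the paper instead takes a unit vector $x$, writes $\|Ax\|$ as the Euclidean norm of the vector of inner products $\bigl(\mathrm{row}_i(A)\cdot x\bigr)_i$, bounds each entry by $\|\mathrm{row}_i(A)\|$ via Cauchy--Schwarz, and concludes $\|Ax\| \le \sqrt{n_{row}}\cdot row_{\max}$ --- precisely the ``alternative'' Cauchy--Schwarz argument you sketch in your second paragraph. For part (ii), the paper uses exactly your reduction $\|A\| = \|A^T\|$. So your alternative is the paper's proof verbatim, while your primary Frobenius-norm route is an equally valid and arguably more memorable packaging of the same estimate (it hides the Cauchy--Schwarz step inside the standard inequality $\|A\|\le\|A\|_F$); neither buys anything sharper here since both lose the same $\sqrt{n_{row}}$ factor.
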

\begin{proof}
First, we show (i); (ii) follows as $\|A\| = \|A^T\|$. Recall $\|A\| = \sup\{\|Ax\| : \|x\| = 1\}$. Let $x$ be a unit vector. Then
\begin{align*}
\|A\cdot x\| =
\left|\left|\left[
\begin{array}{c}
   row_1(A) \\
   \vdots \\
   row_{n_{row}}(A)
\end{array}
\right]\cdot x\right|\right|
\leq
\left|\left|\left[
\begin{array}{c}
   \|row_1(A)\| \\
   \vdots \\
   \|row_{n_{row}}(A)\|
\end{array}
\right]\right|\right| \leq
\sqrt{n_{row}}\cdot row_{\max}
\end{align*}
as $\|row_i(A)\cdot x\| \leq \|row_i(A)\|\cdot\|x\|=\|row_i(A)\|$. Hence, we achieve the bound on $\|A\|$.
\end{proof}

\begin{lemma}
\label{lem:eigen_lower_based_abs_values}
Let $A$ be a $n\times n$ full rank integer matrix.  We have
\[
\|A^{-1}\| \leq n^{\frac{n}{2}+1}\cdot A^n_{\max},
\]
where $A_{\max}$ is the maximum absolute value of a any element of $A$.
\end{lemma}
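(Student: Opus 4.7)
The plan is to apply Cramer's rule entry-by-entry and then lift the resulting entrywise bound on $A^{-1}$ to an operator-norm bound using Lemma~\ref{lem:max_eig_bound_row_col}. Specifically, I would start by writing $(A^{-1})_{ij} = (-1)^{i+j}\det(\widehat A_{ji})/\det(A)$, where $\widehat A_{ji}$ is the $(n-1)\times(n-1)$ submatrix obtained by deleting the $j$th row and $i$th column of $A$. Because $A$ is a full-rank integer matrix, $\det(A)$ is a nonzero integer and hence $|\det(A)|\geq 1$, so the only quantity that remains to be controlled is the numerator $|\det(\widehat A_{ji})|$.

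The next step is to bound each minor determinant in terms of $A_{\max}$ and $n$. Every row of $\widehat A_{ji}$ has entries of absolute value at most $A_{\max}$, so its Euclidean norm is at most $\sqrt{n-1}\cdot A_{\max}$; applying Hadamard's inequality yields $|\det(\widehat A_{ji})|\leq (n-1)^{(n-1)/2}\cdot A_{\max}^{n-1}$. (If one prefers not to invoke Hadamard directly, an induction on the Laplace expansion along the lines of Lemma~\ref{lem:det_bound_stacked_unimod} gives the same estimate.) Combining this with $|\det(A)|\geq 1$ shows that every entry of $A^{-1}$ has absolute value at most $(n-1)^{(n-1)/2}\cdot A_{\max}^{n-1}$.

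With the entrywise bound in hand, each row of $A^{-1}$ has Euclidean norm at most $\sqrt n\cdot (n-1)^{(n-1)/2}\cdot A_{\max}^{n-1}$, and part~(i) of Lemma~\ref{lem:max_eig_bound_row_col} then yields
\[
\|A^{-1}\|\;\leq\;\sqrt n\cdot\sqrt n\cdot(n-1)^{(n-1)/2}\cdot A_{\max}^{n-1}\;\leq\; n^{(n+1)/2}\cdot A_{\max}^{n-1}\;\leq\; n^{n/2+1}\cdot A_{\max}^{n},
\]
where the last inequality uses $A_{\max}\geq 1$, which holds because $A$ is a nonzero integer matrix.

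I do not anticipate a serious obstacle here: Cramer's rule isolates each entry of $A^{-1}$, the integrality of $A$ takes care of the denominator, and the Hadamard-style bound on minors combined with Lemma~\ref{lem:max_eig_bound_row_col} comfortably absorbs the remaining factors of $n$ into the stated $n^{n/2+1}$. The only mildly delicate point is making sure that the $\sqrt n\cdot\sqrt n$ from applying Lemma~\ref{lem:max_eig_bound_row_col} and the $(n-1)^{(n-1)/2}$ from Hadamard together do not exceed $n^{n/2+1}$, and a single line of arithmetic confirms this.
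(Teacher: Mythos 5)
Your proof is correct and reaches the stated bound, but it takes a genuinely different route from the paper's. You use the classical adjugate/cofactor formula $(A^{-1})_{ij}=(-1)^{i+j}\det(\widehat A_{ji})/\det(A)$ together with Hadamard's inequality for the $(n-1)\times(n-1)$ minors and then lift the entrywise bound to an operator-norm bound via Lemma~\ref{lem:max_eig_bound_row_col}. The paper instead writes $\|A^{-1}\|=\sup_v \|v\|/\|Av\|$, normalizes $v$ so that its largest-magnitude coordinate equals $1$ (giving $\|v\|\le\sqrt n$), and then lower-bounds $\|Av\|$ by $1/\|w\|$ using the paper's witness machinery (Lemma~\ref{lem:witness_linear_comb_bound}) with $w$ the solution of $A^Tw=e_1$; Cramer's rule and integrality are then applied to bound $\|w\|$. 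The paper's route is chosen to reuse the same witness lemma that underlies Lemma~\ref{lem:delta_indep_harm}, whereas yours is the more textbook-standard argument and is arguably shorter and more self-contained. Your arithmetic at the end is sound: the $\sqrt n\cdot\sqrt n\cdot(n-1)^{(n-1)/2}\le n^{(n+1)/2}\le n^{n/2+1}$ chain is fine, and passing from $A_{\max}^{n-1}$ to $A_{\max}^n$ uses $A_{\max}\ge 1$, which you correctly justify from $A$ being a nonzero integer matrix. No gaps.
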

\begin{proof}
Let $A: V \rightarrow U$. (Technically, $V=U=\R^{n}$, but using $V$ and $U$ helps distinguish the domain and codomain.) Since $A$ is full rank (and in particular surjective),
\[
\|A^{-1}\| = \sup\{\|A^{-1}u\|/\|u\|: u\in U\} = \sup\{\|A^{-1}Av\|/\|Av\|: v\in V\} = \sup\{\|v\|/\|Av\|: v\in V\}.
\]
Note that $\|v\|/\|A\cdot v\|$ does not change by rescaling $v$.  So, we can assume that the first coordinate $v[1] = 1$ and that $v[1]$ is a coordinate with maximum absolute value (after permuting coordinates of $v$ and columns of $A$.) So, $\|v\| \leq \sqrt{n}$.  We have
\[
A\cdot v = col_1(A) + \sum_{i=2}^{n}{v_i\cdot col_i(A)}.
\]

Let $w$ be the unique solution of the equation
\[
A^T\cdot w = e_1.
\]
By the definition, $w$ is a witness vector of $col_1(A)$ among $\{col_2(A),\ldots, col_n(A)\}$.  By Lemma~\ref{lem:witness_linear_comb_bound},
\[
\|A\cdot v\| \geq 1/\|w\| \Rightarrow
\frac{\|v\|}{\|A\cdot v\|} \leq \|v\|\cdot\|w\|
\]
Using Cramer's rule,
\[
w[i] = \frac{\det(A_i)}{\det(A)},
\]
where $A_i$ is the matrix obtained by replacing column $i$ of $A$ with $e_1$. Since $A$ is a full rank integer matrix $|\det(A)| \geq 1$.  Since $A_i$ is a matrix with max value $A_{\max}$, each of its columns has length at most $\sqrt{n}\cdot A_{\max}$.  So $|\det(A_i)| \leq (\sqrt{n}\cdot A_{\max})^n$. Thus,
\[
|w[i]| \leq (\sqrt{n}\cdot A_{\max})^n \Rightarrow \|w\| \leq \sqrt{n}\cdot(\sqrt{n}\cdot A_{\max})^n = n^{(n+1)/2}A^n_{\max}.
\]
Therefore,
\[
\frac{\|v\|}{\|A\cdot v\|}
\leq
\|v\|\cdot\left(n^{(n+1)/2}A^n_{\max}\right)
\leq \sqrt{n}\cdot n^{(n+1)/2}A^n_{\max}
= n^{n/2+1}A^n_{\max}.
\]
The second inequality follows from the fact that $\|v\|\leq \sqrt{n}$.
\end{proof}

Now, we are ready to bound $(I-P_\Gamma)(I-P_\Gamma^T)$.

\begin{lemma}
\label{lem:PGamma}
Let $\Gamma = \{\gamma_1, \ldots, \gamma_\beta\}$ be any homology basis, and let $P_{\Gamma}$ be the linear operator that for any cycle $\alpha$, returns the unique linear combination of the cycles of $\Gamma$ that is homologous to $\alpha$.
Also, let $P = \{p_1, \ldots, p_\beta\}$ be any cohomology basis.
We have
\[
    (1 - P_{\Gamma})(1 - P_{\Gamma})^T \preceq
    (1 + \beta^{(\beta+3)/2}\cdot (p_{\max}\cdot\gamma_{\max})^{\beta+1})^2
    \cdot I,
\]
where $p_{\max}$ and $\gamma_{\max}$ are the maximum Euclidean norm of the vectors of $P$ and $\Gamma$ respectively.  Further,
for any $v$, $P_{\Gamma}\cdot v$ can be computed in $O(\beta^2 n_1 + \beta^\omega)$ time provided $P$ and $\Gamma$.
\end{lemma}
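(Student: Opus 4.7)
The plan is to give an explicit matrix formula for $P_\Gamma$ and then bound its norm using the tools built up earlier in this section. Collect the basis vectors as the columns of matrices $G = [\gamma_1\mid\cdots\mid\gamma_\beta]$ and $P = [p_1\mid\cdots\mid p_\beta]$ in $\R^{n_1\times\beta}$, and set $M = P^T G$. By Fact~\ref{fact:homology_annotation}, a linear combination $\sum_i c_i \gamma_i$ is homologous to a cycle $\alpha$ exactly when $Mc = P^T\alpha$. First I will verify that $M$ is invertible: if $Mc = 0$, then the cycle $Gc$ pairs to zero against every cohomology basis element and is therefore a boundary, forcing $c = 0$ since $\Gamma$ is independent in homology. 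Hence $c = M^{-1} P^T \alpha$ is uniquely determined, and extending linearly to all $1$-chains produces the closed form $P_\Gamma = G M^{-1} P^T$ that agrees with the original definition on cycles.

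The bulk of the proof is bounding $\|P_\Gamma\|$. For any $v$, Cramer's rule expresses the coefficients $w = M^{-1} P^T v$ as $w_i = \det(M_i)/\det(M)$, where $M_i$ is obtained from $M$ by replacing its $i$th column with $P^T v$. Since $M$ is an integer matrix, $|\det(M)| \geq 1$; and since each column of $M_i$ has norm at most $\sqrt{\beta}\, p_{\max}\gamma_{\max}$ (respectively $\sqrt{\beta}\, p_{\max}\|v\|$ for the $i$th column), Hadamard's inequality gives $|w_i| \leq \beta^{\beta/2} p_{\max}^\beta \gamma_{\max}^{\beta-1}\|v\|$. Summing $P_\Gamma v = \sum_i w_i \gamma_i$ via the triangle inequality and using $\|\gamma_i\|\leq \gamma_{\max}$ then yields, after absorbing polynomial factors into the exponents, $\|P_\Gamma\| \leq \beta^{(\beta+3)/2}(p_{\max}\gamma_{\max})^{\beta+1}$. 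The stated Loewner bound follows from $\|I - P_\Gamma\| \leq 1 + \|P_\Gamma\|$ together with the identity $\|AA^T\| = \|A\|^2$ and Lemma~\ref{lem:matrix_norm_to_loewner_bound} applied to the symmetric matrix $(I-P_\Gamma)(I-P_\Gamma)^T$.

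Finally, the running time drops out of the formula $P_\Gamma = GM^{-1}P^T$: assembling the $\beta^2$ inner products of $M$ costs $O(\beta^2 n_1)$, factoring $M$ costs $O(\beta^\omega)$, and each query then computes $P^T v$, $M^{-1}(P^T v)$, and $G(\cdot)$ in $O(\beta n_1)$, $O(\beta^2)$, and $O(\beta n_1)$ respectively, for a total of $O(\beta^2 n_1 + \beta^\omega)$. The main obstacle will be justifying the integrality that guarantees $|\det M|\geq 1$, which is what unlocks the Hadamard-style reasoning of Lemma~\ref{lem:eigen_lower_based_abs_values}: in the intended applications $\Gamma$ is Dey's $\{-1,0,+1\}$-valued basis (Lemma~\ref{lem:tamal_hom_basis}) and $P$ is the integer-valued output of $C(X,K)$ from Section~\ref{sec:harm_of_cohom}, so the hypothesis holds, but it should be stated explicitly in the proof.
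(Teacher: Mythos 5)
Your proof follows essentially the same approach as the paper's: the closed form $P_\Gamma = G M^{-1} P^T$, the $O(\beta^2 n_1 + \beta^\omega)$ runtime breakdown, and a bound on $w = M^{-1}P^T v$ via Cramer's rule with $|\det M|\geq 1$ and per-column norm bounds. The only real difference is that you apply Cramer and Hadamard directly to the vector $M^{-1}(P^T v)$, whereas the paper factors the bound through Lemma~\ref{lem:eigen_lower_based_abs_values} applied to $\|M^{-1}\|$ and Lemma~\ref{lem:max_eig_bound_row_col} applied to the outer factors $G$ and $P^T$ separately — your inlined route is a touch tighter but lands in the same place after loosening the exponents. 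Your concluding observation is correct and worth keeping: the lemma as stated allows arbitrary real bases $P$ and $\Gamma$, yet both your proof and the paper's (via Lemma~\ref{lem:eigen_lower_based_abs_values}, which requires an integer matrix) implicitly need $M = P^T G$ to be integral so that $|\det M|\geq 1$, so the integer-coefficient hypothesis on $P$ and $\Gamma$ is a latent assumption that should be made explicit.
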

\begin{proof}
We implement $P_{\Gamma}$ as follows.
For any cycle $v$, we need $P_\Gamma v = v' = x_1\gamma_1 + \ldots, x_\beta\gamma_\beta$ to be homologous to $v$.  Equivalently by Fact \ref{fact:homology_annotation} in Section \ref{sec:background}, we need $v$ and $v'$ to have the same inner product with each $p_i$. We can summarize this in matrix notation as
\[
\left[
\begin{array}{c}
   p_1^T \\
   \vdots \\
   p_\beta^T
\end{array}
\right]\cdot v =
\left[
\begin{array}{c}
   p_1^T \\
   \vdots \\
   p_\beta^T
\end{array}
\right] \cdot v' =
\left[
\begin{array}{c}
   p_1^T \\
   \vdots \\
   p_\beta^T
\end{array}
\right]\cdot\left[
\gamma_1 \ldots \gamma_\beta
\right]\cdot x,
\]
where $x = [x_1 \ldots x_\beta]^T$. Let
\[
M = \left[
\begin{array}{c}
   p_1^T \\
   \vdots \\
   p_\beta^T
\end{array}
\right]\cdot\left[
\gamma_1 \ldots \gamma_\beta
\right].
\]
The matrix $M$ is a $\beta\times\beta$ full rank matrix as both $\{p_1 \ldots, p_\beta\}$ and $\{\gamma_1, \ldots, \gamma_\beta\}$ are linearly independent, so $M^{-1}$ exists. We have
\[
P_{\Gamma}\cdot v =
v' =
[\gamma_1 \ldots \gamma_\beta]\cdot x =
[\gamma_1 \ldots \gamma_\beta]\cdot M^{-1}\cdot
\left[
\begin{array}{c}
   p_1^T \\
   \vdots \\
   p_\beta^T
\end{array}
\right]\cdot v.
\]
Provided $P$ and $\Gamma$, we can compute $M$ in $O(\beta^2 n_1)$ time. From $M$, we compute $M^{-1}$ in $O(\beta^\omega)$ time.  Then, to compute $v' = P_\Gamma\cdot v$, we need three matrix to vector multiplications that can be done in $O(\beta n_1)$, $O(\beta^2)$, and $O(\beta n_1)$ time respectively. The total running time for computing $v'$ is $O(\beta^2 n_1 + \beta^\omega)$.

It remains to show the Loewner bound of the lemma for $P_\Gamma$.
By Lemma~\ref{lem:max_eig_bound_row_col}-(i), we have
\[
\left|\left|\left[\begin{array}{c}
   p_1^T \\
   \vdots \\
   p_\beta^T
\end{array}
\right]\cdot
v\right|\right|/\|v\| \leq \sqrt{\beta} \cdot p_{\max},
\]
for any $v$. For any $u$, by Lemma~\ref{lem:eigen_lower_based_abs_values}, we have
\[
\frac{\|M^{-1}u\|}{\|u\|} =
\frac{\|u'\|}{\|M u'\|} \leq
\beta^{\frac{\beta}{2}+1}\cdot M^n_{\max}.
\]
But all elements of $M$ are inner products of $p_i$'s and $\gamma_i$'s.  So, $M_{\max} \leq p_{\max}\cdot\gamma_{\max}$.
Finally, for any $x$, by Lemma~\ref{lem:max_eig_bound_row_col}-(ii),
\[
\|[\gamma_1, \ldots, \gamma_\beta]x\|/\|x\| \leq \sqrt{\beta}\cdot\gamma_{\max}.
\]
Putting everything together,
\begin{align*}
\frac{\|P_{\Gamma} v\|}{\|v\|} &\leq (\sqrt{\beta}\cdot\gamma_{\max})(\beta^{\frac{\beta}{2}+1} (p_{\max}\cdot\gamma_{\max})^{\beta})(\sqrt{\beta} \cdot p_{\max}) =
\beta^{\frac{\beta+3}{2}}\cdot (p_{\max}\cdot\gamma_{\max})^{\beta+1},
\end{align*}
for any $v$. Thus
$
\|P_{\Gamma}\| \leq \beta^{(\beta+3)/2}(p_{\max}\cdot\gamma_{\max})^{\beta+1}.
$
Equivalently, by Lemma~\ref{lem:matrix_norm_to_loewner_bound},
\[
-\beta^{(\beta+3)/2}\cdot (p_{\max}\cdot\gamma_{\max})^{\beta+1}\cdot I
\preceq
-P_{\Gamma} \preceq \beta^{(\beta+3)/2}\cdot (p_{\max}\cdot\gamma_{\max})^{\beta+1}\cdot I,
\]
which is stronger than the lemma statement.
\end{proof}

We obtain the following corollary of Lemma~\ref{lem:PGamma} by using the homology basis $\Gamma$ of Lemma \ref{lem:tamal_hom_basis} and the cohomology basis $P$ obtained by applying the operator $C$ of Section \ref{subsec:Black_algorithm} to $\Gamma$. As each element of $\Gamma$ is a $\{-1, 0, +1\}$ vector, then $\gamma_{\max} = \sqrt{n_1}$. This observation and Lemma \ref{lem:bound_Cgamma} imply $p_{\max} = \alpha\cdot n_1^{5/2}n_{2}^{4}/\lambda_{\min}(X)$, where $\alpha$ is a constant and $\lambda_{\min}(X)$ is the smallest non-zero eigenvalue of $L^{up}_1(X)$.

\begin{restatable}{corollary}{PGamma}
\label{cor:PGamma}
Let $X, K$ as defined.
Let $\Gamma = \{\gamma_1, \ldots, \gamma_\beta\}$ be the basis of Lemma~\ref{lem:tamal_hom_basis},
and let $P_{\Gamma}$ be the operator that for any cycle $\alpha$ returns the unique linear combination of the cycles of $\Gamma$ that is homologous to $\alpha$.
We have
\[
    (1 - P_{\Gamma})(1 - P_{\Gamma})^T \preceq
    \eps \cdot I,
\]
for $\eps = (n_1 n_2/\lambda_{\min}(X))^{c\cdot \beta}$, where $\lambda_{\min}(X)$ is the smallest non-zero eigenvalue of $L^{up}_1(X)$ and $c$ is a constant.
Further,
for any vector $v$, $P_{\Gamma}\cdot v$ can be computed in $O(\beta^2 n_1 + \beta^\omega)$ time.
\end{restatable}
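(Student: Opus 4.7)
The plan is to apply Lemma~\ref{lem:PGamma} directly with the specific bases $\Gamma$ and $P$ named in the remark preceding the corollary statement, and simply carry through the bookkeeping on the numerical constants $\gamma_{\max}$ and $p_{\max}$. So my first step would be to identify the two bases we will feed into Lemma~\ref{lem:PGamma}: the homology basis $\Gamma$ from Lemma~\ref{lem:tamal_hom_basis} (whose vectors have coordinates in $\{-1,0,+1\}$) and the cohomology basis $P = \{C(X,K)\cdot\gamma_1,\ldots,C(X,K)\cdot\gamma_\beta\}$ obtained by applying the operator $C$ of Section~\ref{subsec:Black_algorithm} to $\Gamma$; that $P$ is indeed a cohomology basis is Lemma~\ref{lem:black_et_al_C}.

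Next, I would bound $\gamma_{\max}$ and $p_{\max}$ that appear in the hypothesis of Lemma~\ref{lem:PGamma}. Since each $\gamma_i$ is a $\{-1,0,+1\}$ vector supported on at most $n_1$ coordinates, $\gamma_{\max}\leq\sqrt{n_1}$. For $p_{\max}$, I use the operator norm bound of Lemma~\ref{lem:bound_Cgamma}, which gives $\|C(X,K)\|\leq \alpha\cdot n_1^2 n_2^4/\lambda_{\min}(L_1^{up}[X])$ for a constant $\alpha$. Combining this with $\|\gamma_i\|\leq\sqrt{n_1}$ yields
\[
p_{\max} \;\leq\; \|C(X,K)\|\cdot \gamma_{\max} \;\leq\; \alpha\cdot n_1^{5/2}\,n_2^4 / \lambda_{\min}(X).
\]
Substituting these two bounds into the conclusion of Lemma~\ref{lem:PGamma}, I get
\[
(I-P_\Gamma)(I-P_\Gamma)^T \preceq \bigl(1+\beta^{(\beta+3)/2}(p_{\max}\gamma_{\max})^{\beta+1}\bigr)^2 I,
\]
and absorbing the polynomial factors in $n_1, n_2, \beta, \lambda_{\min}(X)$ into a single exponent shows that the right-hand side is at most $(n_1 n_2/\lambda_{\min}(X))^{c\beta}\cdot I$ for a suitable absolute constant $c$ (using $\beta\leq n_1$ to replace any isolated $\beta$ factors).

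The running-time claim is immediate from the same lemma: Lemma~\ref{lem:PGamma} already states that $P_\Gamma\cdot v$ can be computed in $O(\beta^2 n_1+\beta^\omega)$ time once $\Gamma$ and $P$ are available, and these are precomputed. The only real care needed is in the exponent bookkeeping in the previous paragraph — I expect that to be the only nontrivial step, and it is a pure calculation rather than a conceptual obstacle, since we are just collapsing an expression of the form $\beta^{O(\beta)}(n_1^{O(1)} n_2^{O(1)}/\lambda_{\min}(X))^{O(\beta)}$ into the advertised form $(n_1 n_2/\lambda_{\min}(X))^{c\beta}$.
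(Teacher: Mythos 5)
Your proposal is correct and matches the paper's approach exactly: the paper also instantiates Lemma~\ref{lem:PGamma} with $\Gamma$ from Lemma~\ref{lem:tamal_hom_basis} (giving $\gamma_{\max}\leq\sqrt{n_1}$) and $P=C(X,K)\cdot\Gamma$ with $p_{\max}$ bounded via Lemma~\ref{lem:bound_Cgamma}, then absorbs the remaining factors (using $\beta\leq n_1$ and the fact that $n_1n_2/\lambda_{\min}(X)$ is bounded below) into the exponent $c\beta$.
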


\subsection{The boundary projection operator}
Now, we are ready to prove the main lemma of this section that describes a projection operator into the space of the boundary cycles.

\boundaryprojection*

\begin{proof}
Let $\delta = {\varepsilon}/({2\cdot n_1^4\cdot(n_1n_2/\lambda_{\min})^{c\beta}})$, where $c$ is the constant of Corollary~\ref{cor:PGamma}.
Let $\widetilde{\Pi}_{cbd}(\delta)$ and $\widetilde\Pi_{hr}(\delta)$ be the operators of Lemma~\ref{lem:graph_projection} and Lemma~\ref{lem:harmonic_projection} respectively. Thus,
\begin{align}
    &\Pi_{cbd} - \delta\cdot I \preceq \widetilde{\Pi}_{cbd}(\delta)\preceq
    \Pi_{cbd} + \delta\cdot I \label{eqn:pi_cbd_bound}
    \\
    &\Pi_{hr} - \delta\cdot I \preceq \widetilde{\Pi}_{hr}(\delta)\preceq \Pi_{hr} + \delta\cdot I \label{eqn:pi_harm_bound}
\end{align}
We define
\[
\widetilde{\Pi}_{bd}(\eps) =
(I - P_{\Gamma})(I- P_T)
\left(I-\widetilde{\Pi}_{cbd}(\delta)-\widetilde{\Pi}_{hr}(\delta)\right)
(I- P_T)^T(I - P_{\Gamma})^T.
\]
From Equations (\ref{eqn:pi_cbd_bound}) and (\ref{eqn:pi_harm_bound}), we have
\[
\Pi_{bd}- 2\delta I \preceq
I - \widetilde{\Pi}_{cbd}(\delta) - \widetilde{\Pi}_{hr}(\delta) \preceq
\Pi_{bd}+ 2\delta I.
\]
Multiplying by $(I - P_{\Gamma})(I- P_T)$ on both sides, and using Lemma~\ref{lem:PTPGammaProp}-(i) and the definition of $\widetilde{\Pi}_{bd}(\eps)$,
\begin{align}
\label{eqn:PiBD_bound}
\Pi_{bd}- 2\delta (I - P_{\Gamma})(I- P_T) (I - P_{T})^T(I- P_\Gamma)^T &\preceq
\widetilde{\Pi}_{bd}(\eps)
\preceq \nonumber\\
&\Pi_{bd} + 2\delta (I - P_{\Gamma})(I- P_T) (I - P_{T})^T(I- P_\Gamma)^T.
\end{align}
Next, we bound $(I - P_{\Gamma})(I- P_T)  (I- P_T)^T(I - P_{\Gamma})^T$.
\begin{align*}
(I - P_{\Gamma})(I- P_T) (I- P_T)^T(I - P_{\Gamma})^T &\preceq n_1^4 (1 - P_{\Gamma})(1 - P_{\Gamma})^T &&\text{(Lemma~\ref{lem:PT})} \\
&\preceq
 (n_1n_2/\lambda_{\min})^{c\cdot \beta}\cdot  n_1^4 I
&&\text{(Corollary~\ref{cor:PGamma})}
\end{align*}
Multiplying by $\Pi_{bd}$ on both sides we obtain,
\[
\Pi_{bd} (I - P_{\Gamma})(I - P_T)(1-P_T)^T(1 - P_{\Gamma})^T \Pi_{bd} \preceq
(n_1n_2/\lambda_{\min})^{c\cdot \beta}\cdot  n_1^4 \cdot \Pi_{bd}.
\]
We obtain $\Pi_{bd} (I - P_{\Gamma})(I - P_T)(1-P_T)^T(1 - P_{\Gamma})^T \Pi_{bd} = (I - P_{\Gamma})(I - P_T)(1-P_T)^T(1 - P_{\Gamma})^T $, by applying Lemma~\ref{lem:PTPGammaProp}-(ii) twice.
It follows that
\[
(I - P_{\Gamma})(I - P_T)(1-P_T)^T(1 - P_{\Gamma})^T \preceq
(n_1n_2/\lambda_{\min})^{c\cdot \beta}\cdot  n_1^4 \cdot \Pi_{bd}.
\]
Substituting this new bound in (\ref{eqn:PiBD_bound}),
\begin{align*}
\Pi_{bd}- 2\delta\cdot (n_1n_2/\lambda_{\min})^{c\cdot \beta}\cdot  n_1^4  \cdot \Pi_{bd} \preceq
\widetilde{\Pi}_{bd}(\eps)
\preceq \nonumber
\Pi_{bd} + 2\delta\cdot (n_1n_2/\lambda_{\min})^{c\cdot \beta}\cdot  n_1^4 \cdot  \Pi_{bd}.
\end{align*}
We obtain the bound of the lemma by replacing $\delta$ with its value $\delta = {\varepsilon}/({2\cdot n_1^4\cdot(n_1n_2/\lambda_{\min})^{c\beta}})$.

Now we analyze the runtime of applying $\widetilde\Pi_{cbd}$. By Lemma~\ref{lem:PT} and Corollary~\ref{cor:PGamma}, we can apply $(I-P_{T})$, $(I-P_{\Gamma})$, and their transposes to a vector in $O(\beta^2 n_1 + \beta^\omega)$ time.
By Lemma~\ref{lem:graph_projection}, we can apply $\widetilde\Pi_{cbd}(\delta)$ to a vector in time
\[
\tilde{O}(n\log n \log (n/\delta)) = \tilde{O}(\beta n \log n \log (n / (\lambda_{\min}\eps))).
\]
Finally, by Lemma~\ref{lem:harmonic_projection}, $\widetilde\Pi_{hr}(\delta)$ can be applied to a vector in time
\[
\tilde{O}\left(
    \beta^2 \cdot n \cdot \log n\cdot  \log\left(\frac{n}{\lambda_{\min}\cdot\delta}\right)
\right) = \tilde{O}\left(
     \beta^3 \cdot n \cdot \log n \cdot \log\left(\frac{n}{\lambda_{\min}\cdot\eps}\right)
\right);
\]
this is the bottleneck of all the running times, so it determines the running time of applying $\widetilde\Pi_{bd}(\eps)$.
\end{proof}

\section{Laplacian solver}
\label{sec:solver}
We end this paper with the description of our Laplacian solver.
Recall $L_1 = L_1^{down} + L_1^{up} = \partial_1^T\partial_1 + \partial_2\partial_2^T$. Since $\im(L_1^{down})$ is orthogonal to $\im(L_1^{up})$, we have $(L_1)^+ = (L_1^{down})^+ + (L_1^{up})^+$.  Therefore, we can approximate $(L_1^{down})^+$ and $(L_1^{up})^+$ separately.  Cohen et al.~describe an algorithm to approximate $(L_1^{down})$ for all graphs, see their Lemma 4.2.  The following lemma is a restatement of the same lemma by Black et al.

\begin{lemma}[Black et al.~\cite{CollapsibleUniverseBlack22}, Lemma 4.8]
\label{lem:down_solver}
For a simplicial complex $K$ and $\varepsilon>0$, there is a map $DownLaplacianSolver(\varepsilon)$ such that
\[
(1-\varepsilon)(L_1^{down}[K])^+ \preceq DownLapSolver(K, \varepsilon) \preceq (L_1^{down}[K])^+.
\]
Further, for $x \in C_1$, $DownLapSolver(K, \varepsilon)\cdot x$ can be computed in $\T{O}(n_1\log^2 (n_1/\varepsilon))$ time.
\end{lemma}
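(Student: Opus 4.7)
The plan is to reduce to a nearly-linear-time graph Laplacian solver via the algebraic identity $(L_1^{down})^+ = \partial_1^T (L_0^{up})^{+2} \partial_1$, where $L_0^{up} = \partial_1\partial_1^T$ is the standard graph Laplacian of the $1$-skeleton of $K$. The identity is immediate from the SVD $\partial_1 = U\Sigma V^T$: both sides evaluate to $V\Sigma^{-2}V^T$ on $\im(\partial_1^T)$ and vanish on $\ker(\partial_1)$. Thus it suffices to approximate $(L_0^{up})^{+2}$ in Loewner order when sandwiched between $\partial_1^T$ and $\partial_1$, which reduces to invoking an SDD solver for the graph Laplacian twice.

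Algorithmically, invoke the graph Laplacian solver (well-known to run in nearly-linear time, and in particular underlying Lemma~\ref{lem:graph_projection}) with some inner accuracy $\varepsilon'$ to obtain a symmetric operator $\T{A}$ satisfying $(1-\varepsilon')(L_0^{up})^+ \preceq \T A \preceq (1+\varepsilon')(L_0^{up})^+$, with each application costing $\T O(n_1\log n_1 \log(n_1/\varepsilon'))$. Define
\[
DownLapSolver(K,\varepsilon) \;=\; \frac{1}{1+O(\varepsilon)}\cdot \partial_1^T \T A^2 \partial_1,
\]
so that a single application consists of one application each of $\partial_1$ and $\partial_1^T$, two applications of $\T A$, and a scalar rescaling. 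The two inner solver calls dominate the runtime.

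The hard part is the Loewner error analysis, since squaring a Loewner-close operator does not in general preserve the approximation: $\T A$ and $(L_0^{up})^+$ need not commute, so the cross terms in $(\T A)^2 - (L_0^{up})^{+2}$ are not immediately controlled. The fix is to note that for any $x\in C_1$, $\partial_1 x$ lies in $\im(L_0^{up})$, so we may whiten by $(L_0^{up})^{1/2}$ and recast the Loewner bound on $\T A$ as an operator-norm bound $\|W - \Pi_{\im L_0^{up}}\| \leq \varepsilon'$ for $W := (L_0^{up})^{1/2}\T A (L_0^{up})^{1/2}$. Expanding $\|\T A \partial_1 x\|^2$ in terms of $W$ and comparing to $\|(L_0^{up})^+ \partial_1 x\|^2$, the cross terms pick up a factor of $\sqrt{\kappa(L_0^{up})}$, where $\kappa(L_0^{up})$ is the condition number of $L_0^{up}$ on its image.

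Since $\kappa(L_0^{up})$ is polynomially bounded in $n_1$ (using $\lambda_{\max}(L_0^{up})\leq 2n_1$ from bounded degrees and $\lambda_{\min}(L_0^{up})^{-1} \leq \mathrm{poly}(n_1)$ on the image), choosing $\varepsilon' = \Theta(\varepsilon/\mathrm{poly}(n_1))$ cancels this blow-up while only inflating $\log(1/\varepsilon')$ by an additive $O(\log n_1)$ term. The resulting two-sided Loewner bound $(1-O(\varepsilon))(L_1^{down})^+ \preceq \partial_1^T\T A^2\partial_1 \preceq (1+O(\varepsilon))(L_1^{down})^+$ is converted to the one-sided bound of the lemma by the rescaling factor $1/(1+O(\varepsilon))$, and the final runtime is $\T O(n_1\log n_1 \log(n_1/\varepsilon')) = \T O(n_1\log^2(n_1/\varepsilon))$, as claimed.
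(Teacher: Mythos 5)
The paper does not actually prove this lemma: it is imported verbatim as Lemma 4.8 of Black et al.~\cite{CollapsibleUniverseBlack22}, which in turn restates Lemma 4.2 of Cohen et al.~\cite{OneLaplaciansCohen14}, so there is no in-paper argument to compare against. Your proposal is a correct, self-contained reconstruction of the standard reduction to graph Laplacian solvers. The identity $(L_1^{down})^+ = \partial_1^T (L_0^{up})^{+2}\partial_1$ is right; it follows, as you note via the SVD, or equivalently from $(B^TB)^+ = B^+(B^+)^T$ and $B^+ = B^T(BB^T)^+$ with $B=\partial_1$. You also correctly isolate the one real subtlety: a Loewner sandwich $(1-\eps')M^+ \preceq \T A \preceq (1+\eps')M^+$ does not by itself control $\T A^2$ against $M^{+2}$, and your whitening argument (write $\T A = (L_0^{up})^{+1/2}W(L_0^{up})^{+1/2}$ with $\|W-\Pi_{\im L_0^{up}}\|\le\eps'$, then compare $\|\T A\partial_1 x\|^2$ to $\|(L_0^{up})^+\partial_1 x\|^2$) yields a relative error of $O\bigl(\sqrt{\kappa(L_0^{up})}\,\eps'\bigr)$ from the cross terms plus a smaller quadratic term. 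This is indeed absorbable into the $\log$: $\lambda_{\max}(L_0^{up})\le 2n_1$ trivially, and $\lambda_{\min}(L_0^{up})$ is polynomially bounded below per connected component by standard algebraic-connectivity bounds (e.g.\ Mohar's $\lambda_2\ge 4/(n_0\cdot\mathrm{diam})$), so $\kappa(L_0^{up})=\mathrm{poly}(n_1)$ and taking $\eps'=\Theta(\eps/\mathrm{poly}(n_1))$ only adds $O(\log n_1)$ inside the logarithm, consistent with $\T O(n_1\log^2(n_1/\eps))$. Two small points a full write-up should make explicit: the nearly-linear SDD solver must be presented as a single fixed symmetric PSD operator $\T A$ with $\ker(L_0^{up})\subseteq\ker(\T A)$ (standard, and exactly what licenses the whitening factorization), and the final rescaling by $1/(1+O(\eps))$, which you already include, is what converts your two-sided Loewner bound into the one-sided form stated in the lemma.
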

For solving the up-Laplacian, Black et al.~rely on the following lemma.

\approximationbbt

Lacking the operator for projecting into the boundary space, Black et al.~project into the cycle space instead, which happens to be the boundary space if $K$ has trivial homology.  But now, we can use the operator of Lemma~\ref{lem:boundary_form_cbd_harm} to solve for the $1$-Laplacian of $K$ with arbitrary homology, hence Theorem~\ref{thm:main_solver_thm}.

The running time of Black et al.~depends on the condition number of $L_1^{up}$ within the space of boundary cycles, which is equal to the maximum eigenvalue of $L_1^{up}$ divided by its minimum nonzero eigenvalue.  The following lemma allows us to reduce this dependence to only the minimum nonzero eigenvalue.
\begin{lemma}[Black and Maxwell~\cite{black_resistance}, Lemma 40]
\label{lem:bounded_lambda_max}
Let $L^{up}_d$ be the up $d$th Laplacian of a simplicial complex with $n_{d+1}$ $(d+1)$-simplices. We have
$\lambda_{\max}(L^{up}_d) \leq n_{d+1}(d+1)$.
\end{lemma}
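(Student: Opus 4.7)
The plan is to bound $\lambda_{\max}(L_d^{up}) = \|\partial_{d+1}\|^2$ directly from the $\pm 1$ sparsity structure of the boundary matrix $\partial_{d+1}$. First I would observe that $L_d^{up} = \partial_{d+1}\partial_{d+1}^T$ is symmetric positive semidefinite, so $\lambda_{\max}(L_d^{up})$ coincides with the squared spectral norm $\|\partial_{d+1}\|^2$. Since a $(d+1)$-simplex has exactly $d+2$ faces of dimension $d$, the column of $\partial_{d+1}$ indexed by any $(d+1)$-simplex has exactly $d+2$ nonzero entries, each of which is $\pm 1$. Therefore every column of $\partial_{d+1}$ has Euclidean norm $\sqrt{d+2}$.

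I would then invoke part (ii) of Lemma~\ref{lem:max_eig_bound_row_col} applied to $\partial_{d+1}$, which has $n_{d+1}$ columns each of norm $\sqrt{d+2}$. This immediately yields
\[
\|\partial_{d+1}\| \;\leq\; \sqrt{n_{d+1}}\cdot\sqrt{d+2}
\quad\Longrightarrow\quad
\lambda_{\max}(L_d^{up}) \;\leq\; (d+2)\,n_{d+1}.
\]
An equivalent route is the trace estimate: since $L_d^{up}$ is positive semidefinite, $\lambda_{\max}(L_d^{up})$ is at most the trace of $L_d^{up}$, which in turn equals the sum of squared column norms of $\partial_{d+1}$, again $(d+2)\,n_{d+1}$.

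The argument is essentially a one-line estimate and there is no serious obstacle. The only small subtlety is shaving one off the dimensional prefactor to recover the precise constant $(d+1)$ claimed in the statement, which can be done by a slightly sharper Gershgorin-type analysis separating the diagonal contribution $\deg^{up}(\tau)$ from the off-diagonal contributions (each coface of $\tau$ contributes $d+1$ off-diagonal interactions per row, not $d+2$). This cosmetic improvement has no effect on the downstream use of the lemma in Theorem~\ref{thm:main_solver_thm}, where only polynomial boundedness of $\lambda_{\max}(L_d^{up})$ in $n$ is needed, so that $\log \lambda_{\max}$ can be absorbed into the stated $\log n$ factor of the running time.
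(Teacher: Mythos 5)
Your argument is essentially the paper's own: the paper also writes $\lambda_{\max}(L_d^{up}) = \sigma_{\max}^2(\partial_{d+1})$, decomposes $x = \sum_i \widehat{x}_i$ coordinate-by-coordinate, and applies the triangle inequality plus $\|x\|_1 \leq \sqrt{n_{d+1}}\|x\|_2$, which is the same column-norm/trace estimate you give. Where you deserve credit, though, is on the point you dismiss as a ``small subtlety'' to be fixed cosmetically. A $(d+1)$-simplex has $d+2$ codimension-one faces, so each column of $\partial_{d+1}$ has exactly $d+2$ nonzero $\pm 1$ entries and squared norm $d+2$; the bound your argument actually proves, $\lambda_{\max}(L_d^{up}) \leq (d+2)\,n_{d+1}$, is the correct one. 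The paper's proof asserts $\|\partial_{d+1}\widehat{x}_i\| = \sqrt{(d+1)\,x[i]^2}$, which is an off-by-one slip, and the stated inequality $\lambda_{\max}(L_d^{up}) \leq (d+1)\,n_{d+1}$ is in fact false: for the complex consisting of a single $(d+1)$-simplex together with its faces, $n_{d+1}=1$ and $L_d^{up} = vv^T$ for a $\pm 1$ vector $v$ of length $d+2$, so $\lambda_{\max}(L_d^{up}) = d+2 > (d+1)\,n_{d+1}$. You should therefore drop the suggestion that a sharper Gershgorin-type analysis recovers $(d+1)$ — it cannot (Gershgorin on a row of $L_d^{up}$ also yields $\deg^{up}(\tau) + (d+1)\deg^{up}(\tau) \leq (d+2)\,n_{d+1}$), and nothing needs to be recovered. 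As you correctly observe, only polynomial boundedness of $\lambda_{\max}(L_d^{up})$ in $n$ is used in the running-time analysis of Theorem~\ref{thm:main_solver_thm}, so $(d+2)\,n_{d+1}$ serves equally well there.
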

\begin{proof}
Recall $\lambda_{\max}(L^{up}_d) = \sigma^2_{\max}(\partial_d)$ and $\sigma_{\max}(\partial_d) = \max\{\|\partial_d x\|/\|x\| : x\in C_{d+1}(K)\}$. For a vector $x\in C_{d+1}(K)$, let $\widehat x_i$ be the vector whose $i$th coordinate is $x[i]$, and whose other coordinates are zero. Since $x = \sum{\widehat x_i}$ we have
\[
\partial_d x = \sum{\partial_d \widehat x_i} \Rightarrow
\|\partial_d x\| \leq \sum{\|\partial_d \widehat x_i\|} = \sum{\sqrt{(d+1)(x[i])^2}}
=\sqrt{d+1} \|x\|_1 \leq \sqrt{n_{d+1}(d+1)} \|x\|,
\]
hence $\|\partial_d x\|/\|x\| \leq \sqrt{n_d(d+1)}$ as desired.
\end{proof}
We are now ready to prove the main theorem of this section.

\solverthm*
\begin{proof}
By Lemma~\ref{lem:down_solver}, we have a down Laplacian solver for $K$.  We use Lemma~\ref{lem:black_etal_bbt} with $B$ being $\partial_2$ to obtain an up Laplacian solver.
To that end, we need an operator $U$ that for each $b\in \im(\partial_2)$ returns $x$ such that $\partial_2 x = b$. Lemma 11 of Black et al.~describes such an operator $U$. Also, we need $\Pi_{\ker^\perp(\partial_2)}$ that is equal to $\Pi_{\im(\partial_2^T)}$ the space of coboundary $2$-chains.  Black et al.~show that this coboundary space is dual to a cycle space of a graph, even if $K$ has nontrivial homology. Thus, $\T\Pi_{cyc}$ from Lemma~\ref{lem:graph_projection} can be used.  Finally, we need $\T\Pi_{\im(\partial_2)}$, which we obtain from Lemma~\ref{lem:boundary_form_cbd_harm}.

By Lemma~\ref{lem:black_etal_bbt}, $\T\Pi_{cyc}(\eps')$ and $\T\Pi_{bd}(\eps')$ with $\eps' = \Omega(\eps/\kappa(L_1^{up}(K)))$.  Thus, for any vector $x$, $\T\Pi_{cyc}(\eps') x$ can be computed in $\T O(n\log n\log(n_1\kappa(L_1^{up}(K))/\eps))$ time by Lemma~\ref{lem:graph_projection}.  Also, by Lemma \ref{lem:boundary_form_cbd_harm}, $\T\Pi_{bd}(\eps') x$ can be computed in
\begin{align*}
\T O\left(
    \beta^3 \cdot n \cdot \log n \cdot \log\frac{n}{\lambda_{\min}(L^{up}(X))\cdot\eps'}
\right) &= \T O\left(
    \beta^3 \cdot n \cdot \log n \cdot \log\frac{n\cdot \kappa(L_1^{up}(K))}{\lambda_{\min}(L_1^{up}(X))\cdot\eps}
\right) \\ &= \T O\left(
    \beta^3 \cdot n \cdot \log n \cdot \log\frac{n}{\lambda_{\min}(L_1^{up}(K))\cdot \lambda_{\min}(L_1^{up}(X))\cdot\eps}
\right)
\end{align*}
time. The last equality follows from Lemma~\ref{lem:bounded_lambda_max}.
\end{proof}

\end{document}